\newcolumntype{C}[1]{>{\centering\let\newline\\\arraybackslash\hspace{0pt}}m{#1}}
\title{Atomic Cluster Expansion without Self-Interaction}
\author{Cheuk Hin Ho\thanks{Department of Mathematics, University of British Columbia, Vancouver, V6T1Z2, BC, Canada},\and Timon S. Gutleb\footnotemark[1], \and Christoph Ortner\footnotemark[1]}
\date{}
\def\bX{{\bf X}}
\def\bbR{\mathbb{R}}
\def\bbC{\mathbb{C}}
\def\bbF{\mathbb{F}}
\def\bk{{\bf k}} 
\def\bK{{\bf K}} 
\def\calA{\mathcal{A}}
\def\calN{\mathcal{N}}
\def\msl{{\{\hspace{-0.75ex}\{}}
\def\msr{{\}\hspace{-0.75ex}\}}}
\def\mslb{{\big\{\hspace{-0.95ex}\big\{}}
\def\msrb{{\big\}\hspace{-0.95ex}\big\}}}
\def\bAA{{\bf A}}
\begin{document}
\newcommand{\cchho}[1]{{\color{purple} \footnotesize   \tt [chho: #1]}}
\newcommand{\chho}[1]{{\color{purple} #1}}\newcommand{\tsgut}[1]{{\color{green} #1}}
\newcommand\chhosout{\bgroup\markoverwith{\textcolor{purple}{\rule[0.5ex]{2pt}{0.4pt}}}\ULon}

\newcommand{\br}{\mathbf{r}}
\newcommand{\bn}{\mathbf{n}}
\newcommand{\bl}{\mathbf{l}}
\newcommand{\bm}{\mathbf{m}}
\newcommand{\bp}{\mathbf{p}}
\newcommand{\bx}{\mathbf{x}}
\newcommand{\R}{\mathbb{R}}
\newcommand{\tdeg}{\text{deg}}
\newcommand{\Deg}{\text{Deg}}
\newcommand{\tord}{\text{ord}}
\newcommand{\even}{\text{even}}

\newcommand{\purifymat}{\mathcal{P}}

\newtheorem{theorem}{Theorem}[section]
\newtheorem{corollary}[theorem]{Corollary}
\newtheorem{lemma}[theorem]{Lemma}
\newtheorem{proposition}[theorem]{Proposition}
\newtheorem{remark}[theorem]{Remark}
\newtheorem{exmp}{Example}[section]
\algnewcommand{\LineComment}[1]{\State \(\triangleright\) #1}

\maketitle
\begin{abstract}
The \textit{Atomic Cluster Expansion} (ACE) (Drautz, Phys. Rev. B 99, 2019) has been widely applied in high energy physics, quantum mechanics and atomistic modeling to construct many-body interaction models respecting physical symmetries. Computational efficiency is achieved by allowing non-physical self-interaction terms in the model. 
We propose and analyze an efficient method to evaluate and parameterize an orthogonal, or, non-self-interacting cluster expansion model. We present numerical experiments demonstrating improved conditioning and more robust approximation properties than the original expansion in regression tasks both in simplified toy problems and in applications in the machine learning of interatomic potentials.
\end{abstract}

\section{Introduction}
Most physical systems of interest in applications exhibit numerous symmetries. When using machine learning to model such systems, preserving those symmetries is an important consideration for model architecture design. Permutation invariance, that is invariance of the model under permutation of input particles, is a typical property of molecular dynamics, computer vision and quantum mechanics applications \cite{thomas2018tensor, qi2017pointnet, lee2019set}. Equivariance with respect to a reductive Lie group $G$ such as the Lorentz group, orthogonal groups or the Euclidean group are also frequently encountered. One common approach in equivariant machine learning is to generate a complete basis that exhibits permutation and $G$-equivariance which spans the target function space. Concretely, we will consider the construction, efficient evaluation and numerical properties of equivariant bases of polynomials.

An important application of equivariant machine learning models is the parameterization of potential energy surfaces \cite{deringer2019machine, musil2021physics}. An early proposal for the parameterization of permutation invariant potential energy surfaces with polynomials can be found in \cite{braams2009permutationally}. Another approach called aPIPs (atomic body-ordered permutation-invariant polynomials) \cite{reg_cas} generalized this idea to much higher order body interactions. In practice, invariant theory and symbolic computation allow for the use of invariant polynomial bases of up to around body order $5$. In 2019 Drautz \cite{DrautzACE} introduced a framework called the Atomic Cluster Expansion (ACE), which falls under this family of methods but allows for the efficient parameterization of much higher body orders than aPIPs. A key computational ingredient of ACE is that the \textit{canonical} many body expansion is transformed into the \textit{self-interacting} expansion by interchanging a summation and a product operation, resulting in symmetric tensor product basis functions which are more efficient to evaluate at the cost of introducing physically undesired terms. The construction also allows explicit symmetrization with respect to the orthogonal group with a sparse linear operator.

Aside from applications in the parameterization of potential energy surfaces and activate learning for molecular dynamics \cite{vanderoord2022HAL}, ACE has also been used for the parameterization of wave functions for the Schr\"odinger equation \cite{drautzwavefunc, ACESchrodinger} and Lorentz group invariant polynomials for jet tagging \cite{Munoz_2022}. In \cite{Batatia2022mace}, a message passing neural network was introduced to attain state-of-the-art accuracy based on the systematic and theoretically sound model architecture motivated from ACE. More recently, \cite{batatia2023general} extended these ideas to a general reductive Lie group $G$. 
The success of the ACE approach motivated the discussion of its theoretical properties from a numerical analysis point of view in \cite{ACECompleteness, bachmayr2023polynomial} as well as our present investigation of the ACE basis and potential improvements to its approximation properties in this paper.

The canonical many body expansion which underpins the derivation and motivation of the self-interacting ACE expansion is generally not considered in practice since the cost of naive evaluation of the corresponding basis functions scales combinatorially with the input dimension and thus results in computationally intractable algorithms. This paper's primary focus is to show that under mild conditions one \emph{can} efficiently evaluate of the canonical cluster expansion and that this is in fact equivalent to a special regularization choice in the widely used self-interacting expansion. Since the canonical expansion has been considered computationally intractable prior to this work (with the exception of a brief comment in \cite{ACECompleteness}) despite being chemically and physically better motivated, the in-practice differences between the two expansions have remained under-explored. Thus, as far as we are aware, our current work also presents the first in-depth exploration of the canonical expansion.

After describing in detail how to efficiently evaluate the canonical expansion in Section~\ref{section:MainResults} we conduct a practical review of its spanning space compared to the self-interacting expansion, supplementing the results of previous work such as \cite{batatia2023general, ACECompleteness}. Importantly, we demonstrate that our framework respects symmetric sparsification for the Euclidean group, which is of major interest in applications. We present numerical experiments for simplified academic examples in Section~\ref{Section:NumericalExp} and applications in the machine learning of interatomic potentials in Section~\ref{section:MLIPs}, showing that the canonical cluster expansion produces models with better physical properties for applications such as molecular dynamics simulations. Additionally, we observe that the canonical expansion appears less sensitive to the regularization parameter in regression tasks which favours hyper-parameter tuning. While we demonstrate several noteworthy qualitative advantages of the canonical basis, we also observe that the two expansions do not appear to show dramatic differences in the sense of the prediction error in regression tasks as demonstrated in Section \ref{Section:NumericalExp} in a practical but relatively simple point cloud setting. We view this as additional evidence that the efficient self-interacting expansion in previous works is well suited to the problems it is used for. On the other hand, when regressing symmetric functions with fixed dimensionality, the {\it canonical basis} is clearly superior in all our tests.

\section{Theory}
\label{section:MainResults}
\subsection{Two cluster expansions of point clouds}
\label{section:preliminaries}
Let $\Omega$ be a configuration domain for particles denoted by $\bx_j$. A particle is a point in $\R^d$ decorated with additional properties such as chemical species, spin, or charge. For example, an atom in a molecular dynamics simulation could be described by position and chemical species, i.e. $\bx_j = ({\bf r}_j, Z_j) \in \mathbb{R}^3 \times \mathbb{Z}$. An electron in a variational Monte-Carlo simulation would be described by position and spin, $\bx_j = ({\bf r}_j, \sigma_j) \in \mathbb{R}^3 \times \{ \uparrow, \downarrow \}$. 
A particle \emph{configuration} (commonly also called a {\em point cloud}) is a multi-set  on $\Omega$,  denoted by $\mathbf{X} = \msl\bx_j\msr_{j=1}^J$, where each $\bx_j \in \Omega$ and $J \in \mathbb{N}$. We denote the set of all multi-sets on $\Omega$ by $\text{MS}(\Omega)$.

Many applications \cite{thomas2018tensor, lee2019set, Li2022DLDFT, qi2017pointnet, wang2023theoretical} are concerned with approximating a multi-set function
\begin{equation}
    V : \text{MS}(\Omega) \rightarrow \bbF,
\end{equation}
where $\bbF \in \{\bbR, \bbC\}$. The restriction to scalar-valued functions is only for the sake of notational simplicity; we explain in Section \ref{section:G-sym} how to extend all our results to tensor-valued functions. Symmetric functions with fixed dimensionality (as oppposed to multi-set functions) are implicitly included in our framework and will be discussed in Section~\ref{section:orthogonalsymbasis}.

The argument $\bX$ being a multi-set implicitly entails that $V$ is invariant under a permutation (or, relabelling) of particles $\bx_j$.
In addition, one often requires also invariance under a Lie-group action, isometry-invariance being the prototypical example. Let $G$ be such a Lie group acting on $\Omega$, e.g. $G = O(d)$. We say that $V$ is $G$-invariant if
\begin{equation}
\label{eq:V_invar}
V \circ g  = V \qquad \forall g \in G;
\end{equation}
see \cite{thomas2018tensor, mlinvariant, lietransformer} and references therein.
The current work focuses on the atomic cluster expansion (ACE) framework which, in its simplest form,  produces a complete linear model for $G$-invariant functions defined on multi-sets. A similar construction can be used if $V$ is $G$-equivariant instead. We now review this framework in more detail, following \cite{batatia2023general, DrautzACE, ACECompleteness, 2023-acepotentials}.

\subsubsection{Canonical cluster expansion}
\label{section:intro_canonical}
The first step of the ACE framework is to perform a many-body expansion,
\begin{equation}
\label{equa:CanonicalExpansion}
    V(\mathbf{X}) \approx V^{\mathcal{N}}(\mathbf{X}) = \sum^\mathcal{N}_{N=0} \sum_{j_1 < \cdots < j_N} v_N(\bx_{j_1}, \cdots, \bx_{j_N}),
\end{equation}
where $\mathcal{N} \in \mathbb{N}_0 := \mathbb{N} \cup \{0\}$ is the maximum {\em correlation order}, and the expansion components $v_N : \Omega^N \to \mathbb{F}$ are permutation invariant on the fixed dimension subspace $\Omega^N$ of $\text{MS}(\Omega)$. $V^{\mathcal{N}}$ is an approximation to $V$ due to both the truncation in correlation order and the parameterization of the functions $v_N$.

The term $v_0()$ without argument is simply a constant. Each $v_N$ is now expanded using a tensor product basis. To that end, let $\{\phi_k\}_{k \in \mathcal{I}}$ be a countable family of linearly independent one-particle basis functions, i.e., $\phi_k : \Omega \to \bbF$.
Note that $k$ does not necessarily have to be an integer but could be a multi-index. 
We assign to each $\phi_{k}$ a degree $\deg{(\phi_{k})} \in \mathbb{R}_{\geq 0}$ and write $\deg{(\phi_{k})} = \deg(k)$ when there is no confusion. 
Throughout this work we assume that there is a natural total ordering of $\phi_k$ that is consistent with $\deg(\phi_k)$.

One can then parameterize each $v_N$ with finitely many $\bk$,
\begin{equation}
    v_N(\bx_1, \cdots, \bx_N) = \sum_{\bk = (k_1, \cdots, k_N)} c_{\bk} \prod^N_{t = 1} \phi_{k_t}(\bx_t).
\end{equation}
Permutation-invariance of $v_N$ is guaranteed if and only if $c_{\bk} = c_{\sigma \bk}$ for all $\sigma \in S_N$. Note that $v_0() = c_{()} \in \bbF$. Combining the above, we obtain {\it new} parameters $c_{k_1, \cdots, k_N}$ such that
\begin{align}
    \label{eq:exp_pure_1}
    V^{\mathcal{N}}(\mathbf{X}) 
    &=
    \sum^\mathcal{N}_{N=0} \sum\limits_{\substack{\bk = (k_1, \cdots, k_N) \\ \bk \text{ ordered }}} {c_\bk} \mathcal{A}_{\bk}({\bf X}),
\end{align}
where ``ordered'' means lexicographic ordering, and the $\mathcal{A}_{\bk}$ basis is defined by 
\begin{align}
    \label{equa:PureBasis}
    \mathcal{A}_{\bk} = \mathcal{A}_{k_1, \cdots, k_N}(\bX) 
    &:= 
    \sum_{j_1 \neq \cdots \neq j_N} \prod^N_{t = 1} \phi_{k_t}(\bx_{j_t}).    
\end{align}
 We call \eqref{eq:exp_pure_1} the {\em canonical cluster expansion}. This is to contrast it with the self-interacting formulation that was introduced in \cite{DrautzACE} to overcome the prohibitive combinatorial scaling of the computational cost of evaluating \eqref{eq:exp_pure_1} and which we discuss in the next section. After having introduced the self-interacting version, we will return to the canonical formulation in  Section~\ref{section:RecursionAndSpan} to show how to make it equally computationally tractable. 

While \eqref{eq:exp_pure_1} highlights the many-body expansion aspect, it is usually more convenient to employ the more compact expression 
\begin{equation}
    \label{eq:pure_compact}
    V^\mathcal{N}(\bX) = \sum_{\bk \in \bK} c_{\bk} \calA_{\bk}(\bX),
\end{equation}
where $\bK$ is a finite set of ordered tuples $(k_1, \dots, k_N)$ with $0 \leq N \leq \calN$. {We also say that $\mathcal{A}_{\bk}$ is of order $N$, denoted by $\tord(\bk)$, if $\bk = (k_1, \cdots, k_N)$ is of length $N$.}

\subsubsection{Self-interacting cluster expansion}
The computational cost of evaluating the basis $\mathcal{A}_\bk$ scales as $\binom{J}{N}$ due to the naive summation over all unique clusters of $N$ particles chosen from $J$ particles. To obtain a computationally efficient formulation, Drautz \cite{DrautzACE} proposed allowing self-interaction terms in the many-body expansion,
\begin{equation}
\label{equa:selfInteractExpansion}
    U^{\mathcal{N}}(\mathbf{X}) = 
    \sum^\mathcal{N}_{N=0} \sum_{j_1, \cdots, j_N} u_N(\bx_{j_1}, \cdots, \bx_{j_N}).
\end{equation}
In contrast with \eqref{equa:CanonicalExpansion} the summation $\sum_{j_1, \dots, j_N}$ is taken over repeated clusters as well as spurious clusters where one or more particles may be repeated. We therefore call \eqref{equa:selfInteractExpansion} the \emph{self-interacting} cluster expansion. It is sometimes suggested \cite{DrautzACE, ACECompleteness} that this is equivalent to the canonical formulation \eqref{equa:CanonicalExpansion} but we will show in Theorem \ref{theorem:spanKK'} that after discretization this is only true under specific conditions on the basis.

The tensor product structure in the $N$-dimensional sums in \eqref{equa:selfInteractExpansion} can be utilized to obtain a computationally efficient parameterization. Proceeding as above by expanding the $u_N$ terms in a tensor product basis and then interchanging summation we obtain 
\begin{align}
    \label{eq:ex_impure_1}
    U^{{\mathcal{N}}}(\bX) &= \sum_{N = 0}^{\calN} \sum\limits_{\substack{\bk = (k_1, \cdots, k_N) \\ {\bk \text{ ordered }}}} c_{\bk} \bAA_{\bk}(\bX), \qquad \text{where} \\ 
    \label{equa:ImpureBasis}
    \bAA_{k_1, \dots, k_N}(\bX) 
    &:= 
    \sum_{j_1, \cdots ,j_N} \prod^N_{t = 1} \phi_{k_t}(\bx_{j_t}) = 
    \prod^N_{t = 1} \sum^J_{j = 1} \phi_{k_t}(\bx_j).
\end{align}
Substituting 
\begin{equation}
    A_k(\bX) = \sum^J_{j = 1} \phi_{k}(\bx_j), \qquad 
    \bAA_{\bk}(\bX) = \prod_t A_{k_t}(\bX),
\end{equation}
we observe that the evaluation is comprised of two stages: (i) a pooling operation at an $O(J)$ cost per feature $A_k$ and (ii) the $N$-correlations $O(N)$ cost per feature $\bAA_{\bk}$. As a matter of fact, in most scenarios the latter can be reduced to $O(1)$ cost per feature as explained in \cite{RecursiveEvalN-bodyequi}.

\subsection{Efficient Evaluation of the Canonical Cluster Expansion}
\label{section:RecursionAndSpan}
We now discuss our main theoretical results: exposing and analyzing an efficient evaluation algorithm for the canonical cluster expansion. 
The overarching idea is to construct a ``purification operator'' which transforms the self-interaction expansion to the canonical expansion. 
The potential for such an algorithm was hinted at in \cite{ACECompleteness}, but no details or detailed analysis of the purification operator were given. We also note that much of the construction which follows could be directly applied without the symmetrization step to generate a permutation invariant function on $\text{MS}(\Omega)$, or a subspace of $\text{MS}(\Omega)$ containing only a fixed number of particles, which will be discussed further in Section~\ref{section:orthogonalsymbasis}.

For illustrative purposes, we first demonstrate the procedure for a two-correlation. Notice that $\bAA_{k_1 k_2}$ can be rewritten as
$$\mathbf{A}_{k_1k_2} = \sum\limits_{j_1 \neq j_2}\phi_{k_1}(\mathbf{x}_{j_1})\phi_{k_2}(\mathbf{x}_{j_2}) + \sum\limits_{j}\phi_{k_1}(\mathbf{x}_j)\phi_{k_2}(\mathbf{x}_j) = \mathcal{A}_{k_1k_2} + \sum\limits_{j}\phi_{k_1}(\mathbf{x}_{j})\phi_{k_2}(\mathbf{x}_j),$$
or, conversely, 
$$\mathcal{A}_{k_1k_2} = \mathbf{A}_{k_1k_2} - \sum\limits_{j}\phi_{k_1}(\mathbf{x}_j)\phi_{k_2}(\mathbf{x}_j).$$
A key assumption of our following construction is that the pointwise product $\phi_{k_1}(\mathbf{x}_j)\phi_{k_2}(\mathbf{x}_j)$ can be re-expanded, or {\em linearized} as this operation is commonly called, in a finite sum,
\begin{equation} \label{eq:linearize_product}
\phi_{k_1}(\mathbf{x}_j)\phi_{k_2}(\mathbf{x}_j) = \sum_{\kappa} \mathcal{P}^{k_1 k_2}_\kappa \phi_{\kappa}(\mathbf{x}_j),
\end{equation}
with linearization coefficients $\mathcal{P}_{\kappa}^{k_1k_2}$. This assumption is natural for polynomials, including trigonometric polynomials and spherical harmonics; cf. Section~\ref{section:simpleGExamples}.  
Using \eqref{eq:linearize_product} we immediately obtain  
\begin{equation}
\label{eq:PurifyN=2}
\mathcal{A}_{k_1k_2} = \bAA_{k_1k_2} - \sum\limits_{j} \sum_{\kappa} \mathcal{P}_{\kappa}^{k_1k_2} \phi_{\kappa}(\mathbf{x}_j) 
= \bAA_{k_1k_2} - \sum_{\kappa} \mathcal{P}_{\kappa}^{k_1k_2} A_\kappa.
\end{equation}
We note here that for a finite index set $\mathbf{K}$ in~\eqref{eq:pure_compact} and $k_1, k_2 \in \mathbf{K}$, the summation over $\kappa$ has to be closed in $\mathbf{K}$ so that the purification \eqref{eq:PurifyN=2} can be done while only evaluating basis functions corresponding to indices in $\mathbf{K}$. Expanding on this idea one can obtain the following result.
\begin{theorem}
\label{theorem:spanKK'}
    Let $\mathbf{K}$ be a finite ordered index set. Suppose that any pointwise product of $\phi_{k_1}(\mathbf{x})\phi_{k_2}(\mathbf{x})$ can be linearized exactly in terms of a finite sum of $\phi_{\kappa}(\mathbf{x})$ \eqref{eq:linearize_product}. Then there exists a $\mathbf{K}' \supset \mathbf{K}$ such that ${\rm span}(\{\mathbf{A}_{\bk'}\}_{\bk' \in \mathbf{K}'}) \supset {\rm span}(\{\mathcal{A}_{\bk}\}_{\bk \in \mathbf{K}})$.
\end{theorem}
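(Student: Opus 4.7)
The plan is to proceed by induction on the correlation order $N = \tord(\bk)$. The base cases $N = 0$ and $N = 1$ are immediate, since $\bAA_\bk = \calA_\bk$ in both, so $\mathbf{K}' = \mathbf{K}$ suffices. For the inductive step I assume the conclusion for all orders strictly less than $N$ and establish it for a generic $\calA_\bk$ with $\tord(\bk) = N$. The core algebraic identity stratifies the unrestricted sum defining $\bAA_\bk$ by the coincidence pattern of the dummy indices:
\[
    \bAA_\bk(\bX) \;=\; \sum_{j_1,\dots,j_N} \prod_{t=1}^{N} \phi_{k_t}(\bx_{j_t}) \;=\; \sum_{\pi \in \Pi_N} \calA_\bk^\pi(\bX),
\]
where $\Pi_N$ is the set of partitions of $\{1,\dots,N\}$ and $\calA_\bk^\pi$ restricts the sum to tuples with $j_a = j_b$ exactly when $a$ and $b$ share a block of $\pi$. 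The finest partition into singletons recovers $\calA_\bk$ itself, so rearranging yields $\calA_\bk = \bAA_\bk - \sum_{|\pi| < N} \calA_\bk^\pi$, expressing $\calA_\bk$ as $\bAA_\bk$ plus a correction supported on strictly lower correlation orders.

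Next I would apply the linearization hypothesis \eqref{eq:linearize_product} blockwise. For each block $B = \{t_1,\dots,t_m\}$ of $\pi$ with $m \ge 2$, iterated application of \eqref{eq:linearize_product} --- first on $\phi_{k_{t_1}}\phi_{k_{t_2}}$, then successively multiplying and re-linearizing with $\phi_{k_{t_3}},\dots,\phi_{k_{t_m}}$ --- yields a finite expansion
\[
    \prod_{t\in B}\phi_{k_t}(\bx_j) \;=\; \sum_{\kappa \in L(B)} c_{B,\kappa}\,\phi_\kappa(\bx_j),
\]
with a finite index set $L(B)$ and coefficients obtained by iterated contraction of the tensor $\purifymat$. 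Substituting this into $\calA_\bk^\pi$ and factoring over the free block indices turns each $\calA_\bk^\pi$ into a finite linear combination of canonical basis functions $\calA_{\bk''}$ of order $|\pi| < N$, whose indices lie in $\mathbf{K} \cup \bigcup_B L(B)$.

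To close the induction, I would define $\mathbf{K}'$ at the outset, independently of the recursion, as the closure of $\mathbf{K}$ under the operation ``iteratively linearize any product of at most $N_{\max}$ factors drawn from existing indices and adjoin all resulting indices'', where $N_{\max} = \max_{\bk \in \mathbf{K}} \tord(\bk)$. Since $\mathbf{K}$ is finite, each linearization step produces finitely many new indices, and the number of nested multiplications is capped at $N_{\max}$, the set $\mathbf{K}'$ is finite. The inductive hypothesis applied to each lower-order $\calA_{\bk''}$ above (whose indices lie in $\mathbf{K}'$ by construction) places every $\calA_\bk^\pi$, and hence $\calA_\bk$ itself, into $\mathrm{span}(\{\bAA_{\bk'}\}_{\bk'\in\mathbf{K}'})$.

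The main obstacle I anticipate is purely combinatorial bookkeeping: one must verify that the indices generated by (i) the partition expansion, (ii) blockwise iterated linearization, and (iii) recursive substitution from the inductive hypothesis all remain within a single a priori finite $\mathbf{K}'$. Defining $\mathbf{K}'$ before running the induction, as above, avoids the apparent circularity of seemingly needing a larger index set at every recursive step, and reduces the argument to checking that the concrete operations at each level stay inside this fixed closure.
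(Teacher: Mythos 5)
Your argument is correct, but it takes a genuinely different route from the paper's. The paper's proof rests on the single-particle-append recursion of Lemma~\ref{Lemma:RecursionFormula},
\[
\mathcal{A}_{(\mathbf{k},k_{N+1})}=\mathcal{A}_{\mathbf{k}}A_{k_{N+1}}-\sum_{\beta=1}^{N}\sum_{\kappa}\mathcal{P}^{k_\beta k_{N+1}}_{\kappa}\,\mathcal{A}_{\mathbf{k}\left[\beta,\kappa\right]},
\]
which exploits the fact that a newly appended particle index can collide with at most one member of an already-distinct cluster, so only \emph{pairwise} linearizations \eqref{eq:linearize_product} are ever needed; $\mathbf{K}'$ is then obtained by unrolling this recursion and collecting the generated indices. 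You instead stratify the unrestricted sum defining $\bAA_{\bk}$ over the full partition lattice of $\{1,\dots,N\}$ and correct for all coincidence patterns at once, which forces you to linearize products of up to $N$ factors blockwise by iterating \eqref{eq:linearize_product}. Both decompositions are valid and deliver the same span statement, and after unrolling they generate essentially the same index closure. The paper's recursion buys the concrete, implementable update rule \eqref{equa:Purify} and feeds directly into the sparsity and parallelizability estimates of Propositions~\ref{proposition:bound} and~\ref{Proposition:parallelizable}; your closed-form inclusion--exclusion over partitions gives a more transparent one-shot description of the correction terms and makes the a priori finiteness of $\mathbf{K}'$ explicit, at the cost of the heavier combinatorial bookkeeping you flag (in particular, your closure must be taken under iterated linearization of products of previously adjoined indices, not only of the original ones, though the bounded nesting depth does keep it finite).
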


The proof is given in Appendix \ref{Appendix:RecursionFormula}, where we also derive a recursion (Eq.~\eqref{equa:Purify}) analogous to~\eqref{eq:PurifyN=2} which allows the explicit construction of $\mathbf{K}'$. 
Building on the above theorem we obtain the following corollary declaring the existence of the desired purification operator. Its practical implementation is discussed in Section~\ref{sec:implementation_remarks}.

\begin{corollary}
\label{corollary:purifymat}
    Under the conditions in Proposition \ref{theorem:spanKK'}, for each $\bk \in \mathbf{K}$, $\mathcal{A}_{\bk}$ can be 
    expressed uniquely as a linear combination of the $\{\mathbf{A}_{\bk'}\}_{\bk' \in {\bf K}'}$ basis. That is, there exists a unique linear operator $\purifymat = (\mathcal{P}_{\bf k'}^{\bf k})_{{\bf k} \in {\bf K}, {\bf k'} \in {\bf K'}}$, such that 
    \begin{equation}
    \label{eq:linearizek'}
    \mathcal{A}_{\bk} = \sum_{\bk' \in \mathbf{K}'}\mathcal{P}^{\bk}_{\bk'}\mathbf{A}_{\bk'}.
    \end{equation}
    For brevity, we may simply write $\mathcal{A} = \mathcal{P}\mathbf{A}$, where $\mathbf{A} = \{\mathbf{A}_{\bk'}\}_{\bk' \in \mathbf{K}'}$ and $\mathcal{A} = \{\mathcal{A}_{\bk}\}_{\bk \in \mathbf{K}}$.
\end{corollary}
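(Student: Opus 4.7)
The plan is to obtain existence directly from Theorem~\ref{theorem:spanKK'} and to establish uniqueness by showing that $\{\mathbf{A}_{\bk'}\}_{\bk' \in \mathbf{K}'}$ is linearly independent as a family of functions on $\text{MS}(\Omega)$. Existence is immediate: Theorem~\ref{theorem:spanKK'} asserts $\mathcal{A}_{\bk} \in \mathrm{span}(\{\mathbf{A}_{\bk'}\}_{\bk' \in \mathbf{K}'})$ for each $\bk \in \mathbf{K}$, so coefficients $\mathcal{P}^{\bk}_{\bk'}$ exist. In fact the recursion \eqref{equa:Purify} referenced in the theorem's proof constructs them explicitly: at correlation order $N$ one writes $\mathcal{A}_{k_1,\dots,k_N} = \mathbf{A}_{k_1,\dots,k_N}$ minus the self-interaction corrections, each of which is turned into a strictly lower correlation order $\mathcal{A}$-term via the linearization \eqref{eq:linearize_product}. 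The closure construction of $\mathbf{K}'$ in the theorem ensures this procedure stays inside $\mathbf{K}'$.

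For uniqueness, note that by \eqref{equa:ImpureBasis} each basis function factors as a monomial in the pooled one-body features, $\mathbf{A}_{\bk'}(\bX) = \prod_t A_{k'_t}(\bX)$, and distinct ordered tuples $\bk'$ correspond to distinct monomials in the $A_k$. Therefore, any nontrivial linear relation among the $\mathbf{A}_{\bk'}$ would amount to a nontrivial polynomial relation among the finitely many $A_k$ that appear in some index in $\mathbf{K}'$. So uniqueness reduces to algebraic independence of the $\{A_k\}$ as functions on $\text{MS}(\Omega)$.

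To prove this algebraic independence I would use the standing linear independence of the one-particle family $\{\phi_k\}_{k \in \mathcal{I}}$: for any finite subset of indices $k_1,\dots,k_n$ appearing in $\mathbf{K}'$, linear independence on $\Omega$ yields points $\bx^{(1)}, \dots, \bx^{(n)} \in \Omega$ making the matrix $[\phi_{k_i}(\bx^{(j)})]_{i,j=1}^{n}$ invertible. Restricting to multi-sets of the form $\msl \bx^{(1)},\dots,\bx^{(1)},\dots,\bx^{(n)},\dots,\bx^{(n)}\msr$ with integer multiplicities $(a_1,\dots,a_n) \in \mathbb{N}_0^n$, each $A_{k_i}$ becomes an invertible linear function of $(a_1,\dots,a_n)$. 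Any polynomial in $A_{k_1},\dots,A_{k_n}$ that vanishes on $\text{MS}(\Omega)$ therefore vanishes on $\mathbb{N}_0^n$ after invertible linear substitution, and this set is Zariski dense in $\mathbb{F}^n$ since $\mathbb{F} \in \{\mathbb{R},\mathbb{C}\}$, so the polynomial vanishes identically. This forces all coefficients to be zero and establishes uniqueness of $\mathcal{P}$.

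The only mildly nontrivial step is the algebraic independence of the $\{A_k\}$; everything else is a direct consequence of Theorem~\ref{theorem:spanKK'} together with the factorization structure of the self-interacting basis. No new assumptions beyond those of the theorem are needed, and rewriting the representation in matrix form as $\mathcal{A} = \mathcal{P}\mathbf{A}$ is then purely notational.
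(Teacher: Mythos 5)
Your proposal is correct. The existence half is exactly the paper's route: Theorem~\ref{theorem:spanKK'} (via the recursion \eqref{equa:Purify} in Appendix~\ref{Appendix:RecursionFormula}) already produces the coefficients, and the closure of $\mathbf{K}'$ keeps the recursion inside the index set. Where you go beyond the paper is the uniqueness claim: the paper never argues it explicitly, implicitly leaning on the linear independence of the $\mathbf{A}$-basis that it only later cites (for the fixed-$N$ case) from the literature. Your argument supplies this missing piece in a self-contained way, and it is sound: distinct ordered tuples give distinct monomials in the pooled features $A_k$, so uniqueness reduces to algebraic independence of finitely many $A_k$ on ${\rm MS}(\Omega)$; linear independence of the $\phi_k$ yields sample points $\bx^{(1)},\dots,\bx^{(n)}$ with invertible collocation matrix, and restricting to multi-sets built from these points with multiplicities $(a_1,\dots,a_n)\in\mathbb{N}_0^n$ turns $(A_{k_1},\dots,A_{k_n})$ into an invertible linear image of the lattice, which is Zariski dense in $\bbF^n$, so any vanishing polynomial relation is identically zero. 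Note that varying the multiplicities (hence the particle count $J$) is essential here — on a fixed-$J$ slice the $\mathbf{A}_{\bk'}$ need not be independent (e.g.\ $A_0 \equiv J$ when $\phi_0 = 1$) — and your construction handles this correctly. What your approach buys is a rigorous justification of the word ``unique'' in the corollary under only the stated hypotheses (linear independence of the $\phi_k$ and exact linearization), with no extra assumptions.
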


\begin{remark}
As stated in \cite{DrautzACE} and \cite{ACECompleteness}, following their construction, with the decomposition
\begin{equation}
\label{eq:decomposeWN}
\mathcal{A}_{\bk}(\{\br_j\}^J_{j=1}) = \mathbf{A}_{\bk}(\br_{j_1}, \cdots, \br_{j_N}) + W_{N-1}(\{\br_j\}^J_{j=1}),
\end{equation}
$W_{N-1}$ is then of correlation order $N - 1$ and can be absorbed by basis terms of order less than or equal to $N-1$. However, in many cases $\mathbf{K}$ is not rich enough for such an absorption; we give explicit examples in Section~\ref{section:MLIPs} and Table \ref{table:ExtraBasis}. To obtain the canonical expansion via a linear purification, one has to extend $\mathbf{K}$ as discussed in Theorem \ref{theorem:spanKK'}. In those situations, $span(\{\mathbf{A}_{\bk}\}_{\bk \in \mathbf{K}}) \neq span(\{\mathcal{A}_{\bk}\}_{\bk \in \mathbf{K}})$. In the setting of a complete (infinite basis) basis, they are formally always equivalent.
\end{remark}

Two natural questions regarding the efficiency of applying the purification operator $\mathcal{P}$ arise: (1) how does $\mathbf{K'}$ depend on $\mathbf{K}$ in \eqref{eq:linearizek'}, e.g. how much larger is it; and (2) how sparse is the purification matrix, i.e., how many of the entries $\mathcal{P}^{\bk}_{\bk'}$ will be non-zero in \eqref{eq:linearizek'}? The following two propositions (see Appendix \ref{Appendix:Efficiency} for proofs) provide partial answers and illustrate that the purification can be performed efficiently. 

The next proposition illustrates the fact that the purification of $\mathcal{A}_\bk$ preserves properties of the linearization of products of $\phi_{k}$. Extending the definition of degree on $\mathcal{A}_{\bk}$ and $\mathbf{A}_{\bk}$ naturally as $\deg(\mathcal{A}_{\bk}) = \deg(\mathbf{A}_{\bk}) = \tdeg(\bk) = \sum_{t} \tdeg(k_t)$, for $\bk = (k_t)_t$, one obtains the following result. 

\begin{proposition}
\label{proposition:preserving}
    Suppose that the linearization is total-degree preserving,
    \begin{equation}
    \label{equa:prodspan_totdeg}
    \phi_{k_1} \phi_{k_2} \in \text{span}\big\{\phi_\kappa | \tdeg(\kappa) \leq \tdeg(k_1) + \tdeg(k_2)\big\}
    \qquad \forall k_1, k_2. 
    \end{equation}
     Then, the purification \eqref{eq:linearizek'} is also total-degree preserving; that is, 
     \begin{equation}
     \label{eq:totdegzerocoef}
        \mathcal{P}^{\bk}_{\bk'} = 0
        \qquad \text{whenever } 
        \tdeg(\bk')  > \tdeg(\bk). 
     \end{equation}
\end{proposition}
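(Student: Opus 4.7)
The plan is to establish total-degree preservation by induction on the correlation order $N = \tord(\bk)$, leveraging the recursion $\eqref{equa:Purify}$ that is used to prove Theorem~\ref{theorem:spanKK'} in the appendix. The base cases $N = 0$ and $N = 1$ are immediate: $\mathcal{A}_{()} = 1 = \mathbf{A}_{()}$ and $\mathcal{A}_k = \mathbf{A}_k = A_k$, so the corresponding purification coefficients trivially satisfy \eqref{eq:totdegzerocoef}.

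For the inductive step, the crucial structural observation is how $\mathcal{A}_\bk$ relates to $\mathbf{A}_\bk$ via self-interaction subtraction. Expanding $\mathbf{A}_\bk = \sum_{j_1,\dots,j_N}\prod_t \phi_{k_t}(\bx_{j_t})$ by grouping summands according to the coincidence pattern of the indices $j_1,\dots,j_N$, the strictly distinct clusters contribute $\mathcal{A}_\bk$, while any pattern with a collision $j_s = j_t$ produces a pointwise product $\phi_{k_s}(\bx_j)\phi_{k_t}(\bx_j)$. By hypothesis~\eqref{equa:prodspan_totdeg}, each such product linearizes as $\sum_\kappa \mathcal{P}^{k_s k_t}_\kappa \phi_\kappa(\bx_j)$ with $\tdeg(\kappa) \leq \tdeg(k_s) + \tdeg(k_t)$. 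The resulting contribution is therefore a basis function indexed by a shorter tuple $\tilde{\bk}$ with $\tord(\tilde{\bk}) \leq N - 1$, whose total degree, equal to the sum of $\tdeg$ over the surviving indices plus $\tdeg(\kappa)$, satisfies $\tdeg(\tilde{\bk}) \leq \tdeg(\bk)$.

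Solving this grouping identity for $\mathcal{A}_\bk$ and iterating (exactly the content of the recursion \eqref{equa:Purify}), one obtains a finite expression $\mathcal{A}_\bk = \mathbf{A}_\bk - \sum_{\tilde{\bk}} \alpha_{\tilde{\bk}} \mathcal{A}_{\tilde{\bk}}$ in which every summand index $\tilde{\bk}$ satisfies both $\tord(\tilde{\bk}) \leq N-1$ and $\tdeg(\tilde{\bk}) \leq \tdeg(\bk)$. Applying the inductive hypothesis to each $\mathcal{A}_{\tilde{\bk}}$, its purification $\sum_{\bk'} \mathcal{P}^{\tilde{\bk}}_{\bk'} \mathbf{A}_{\bk'}$ involves only indices with $\tdeg(\bk') \leq \tdeg(\tilde{\bk}) \leq \tdeg(\bk)$. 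Combined with the leading $\mathbf{A}_\bk$ term (of degree exactly $\tdeg(\bk)$), this establishes \eqref{eq:totdegzerocoef}.

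The main obstacle is not the induction itself but the combinatorial bookkeeping of degrees through the recursion in Appendix~\ref{Appendix:RecursionFormula}: one must verify that every coincidence pattern, after pairing and linearization, produces a tuple $\tilde{\bk}$ whose total degree is bounded above by $\tdeg(\bk)$, and that this bound is preserved uniformly across all intermediate steps. This reduces to the single inequality $\tdeg(\kappa) \leq \tdeg(k_s) + \tdeg(k_t)$ supplied by \eqref{equa:prodspan_totdeg}, but care is required to ensure that the recursive replacement never inflates the total degree of any surviving multi-index, which is the only place where the hypothesis is actually used.
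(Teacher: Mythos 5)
Your proposal is correct and follows essentially the same route as the paper: induction on the correlation order, using the recursion \eqref{equa:Purify} to express $\mathcal{A}_{\bk}$ in terms of one product term plus lower-order $\mathcal{A}$-terms whose total degree is controlled by \eqref{equa:prodspan_totdeg}, and then applying the inductive hypothesis to those lower-order terms. Your framing via coincidence patterns of $\mathbf{A}_{\bk}$ is only a cosmetic repackaging of the same recursion, so no substantive difference remains.
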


Note that a polynomial basis equipped with the usual definition of polynomial degree satisfies \eqref{equa:prodspan_totdeg}. Importantly, this idea is readily transferable to also show the sparsity of the purification operator in symmetry-based sparsification in Section~\ref{section:simpleGExamples}. Via Proposition \ref{proposition:preserving}, one can see that $\mathcal{P}$ is a triangular matrix with non-zero diagonal entries (cf. Figure \ref{fig:sparsity}) $\mathbf{K}'$ if ordered according the $\deg$ in \eqref{equa:prodspan_totdeg}, hence deriving a strong version of the result in Theorem \ref{theorem:spanKK'}. 

\begin{corollary}
\label{corollary:spanK'K'}
    Let $\mathbf{K}$ be a finite ordered index set.  Suppose that any pointwise product of $\phi_{k_1}(\mathbf{x})\phi_{k_2}(\mathbf{x})$ can be linearized exactly in terms of a finite sum of $\phi_{\kappa}(\mathbf{x})$ \eqref{eq:linearize_product} and is total-degree preserving \eqref{equa:prodspan_totdeg}. Then there exists a $\mathbf{K}' \supset \mathbf{K}$ such that ${\rm span}(\{\mathbf{A}_{\bk'}\}_{\bk' \in \mathbf{K}'}) = {\rm span}(\{\mathcal{A}_{\bk}\}_{\bk \in \mathbf{K'}})$. In addition, we have
    \begin{equation}
    \label{eq:totdegK'subset}
        \mathbf{K'} \subset \big\{\bk' \big| \tdeg(\bk') \leq \max_{\bk \in \mathbf{K}} \tdeg(\bk) \big\}.
    \end{equation}
\end{corollary}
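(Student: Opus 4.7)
My plan is to strengthen Theorem~\ref{theorem:spanKK'} by using Proposition~\ref{proposition:preserving} to show that the purification matrix $\mathcal{P}$ is square, triangular, and invertible on a suitably chosen closed index set $\mathbf{K}'$.

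I would first construct $\mathbf{K}'$ as the closure of $\mathbf{K}$ under the purification recursion \eqref{equa:Purify}: iteratively adjoin every $\bk'$ that appears in the expansion of $\mathcal{A}_{\bk}$ for some $\bk$ already present. Proposition~\ref{proposition:preserving} guarantees that each adjoined $\bk'$ satisfies $\tdeg(\bk') \le \tdeg(\bk) \le D := \max_{\bk \in \mathbf{K}} \tdeg(\bk)$. Because each elementary linearization step merges two slots $\phi_{k_i} \phi_{k_j}$ into a single $\phi_\kappa$, the correlation order strictly decreases at every recursive level, so the closure terminates after finitely many iterations and remains inside $\{\bk' \mid \tdeg(\bk') \le D\}$, establishing \eqref{eq:totdegK'subset}. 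By construction, $\mathcal{A}_{\bk} = \sum_{\bk' \in \mathbf{K}'} \mathcal{P}^{\bk}_{\bk'} \mathbf{A}_{\bk'}$ for every $\bk \in \mathbf{K}'$, giving at once $\text{span}(\{\mathcal{A}_{\bk}\}_{\bk \in \mathbf{K}'}) \subset \text{span}(\{\mathbf{A}_{\bk'}\}_{\bk' \in \mathbf{K}'})$.

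For the reverse inclusion I would argue that $\mathcal{P}|_{\mathbf{K}' \times \mathbf{K}'}$ is invertible. Ordering $\mathbf{K}'$ by increasing total degree and breaking ties by decreasing correlation order, Proposition~\ref{proposition:preserving} gives $\mathcal{P}^{\bk}_{\bk'} = 0$ whenever $\tdeg(\bk') > \tdeg(\bk)$, and inspection of \eqref{eq:PurifyN=2} (together with its $N$-ary generalization in \eqref{equa:Purify}) shows that within a fixed degree the only order-$N$ to order-$N$ coupling is the diagonal one with coefficient $1$. Hence $\mathcal{P}$ is block-lower-triangular with identity blocks on the diagonal and therefore invertible; inverting it expresses every $\mathbf{A}_{\bk'}$ as a linear combination of $\{\mathcal{A}_{\bk}\}_{\bk \in \mathbf{K}'}$ and yields the claimed equality of spans.

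The main obstacle is simultaneously justifying (i) that the closure procedure terminates in a \emph{finite} set and (ii) that this set respects the total-degree bound. Without Proposition~\ref{proposition:preserving} neither is automatic: degrees could in principle grow under repeated linearization, forcing an unbounded cascade of new indices. Total-degree preservation, combined with the strict decrease of correlation order at each recursion step, is precisely what collapses the closure into a finite, degree-bounded fixed point, after which the triangular structure and invertibility of $\mathcal{P}$ follow almost mechanically.
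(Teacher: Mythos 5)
Your proof is correct and follows essentially the same route as the paper: the equality of spans comes from Proposition~\ref{proposition:preserving} forcing $\mathcal{P}$ restricted to $\mathbf{K}'\times\mathbf{K}'$ to be triangular with nonzero (in fact unit) diagonal, hence invertible, and \eqref{eq:totdegK'subset} comes from \eqref{eq:totdegzerocoef}; you merely spell out the closure construction of $\mathbf{K}'$ and the ordering more explicitly than the paper does. One small quibble: finiteness of the closure already follows from the strict decrease of correlation order and the finiteness of each linearization (this is what Theorem~\ref{theorem:spanKK'} uses), so total-degree preservation is needed only for the degree bound on $\mathbf{K}'$, not for termination.
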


Corollary \ref{corollary:spanK'K'} follows directly from the fact that a triangular matrix with strictly non-zero diagonal entries is invertible and Proposition \ref{proposition:preserving}. The statement \eqref{eq:totdegK'subset} follows from \eqref{eq:totdegzerocoef}. Next we show a general estimate on the sparsity of the transformation.

\begin{proposition}
\label{proposition:bound}
Let $K \geq 1$ be a uniform bound on the number of terms in the linearization of all $\phi_{k_i} \phi_{k_j}$. For each $\bk \in \mathbf{K}$, let $M_N$ be the number of non-zero $\mathcal{P}^{\bk}_{\bk'}$ \eqref{eq:linearizek'}, where $N = \tord(\bk)$. Then $M_{N} \leq \prod \limits^{N-1}_{t=1} (Kt+1) \leq K^N N!$. 
\end{proposition}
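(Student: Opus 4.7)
The plan is to induct on the correlation order $N = \tord(\bk)$, relying on the purification recursion derived in Appendix \ref{Appendix:RecursionFormula} (cf.\ Eq.~\eqref{equa:Purify}). Specifically, by separating the index $j_N$ in the product $A_{k_N}\,\mathcal{A}_{k_1,\ldots,k_{N-1}}$ into the case where $j_N$ differs from $j_1,\ldots,j_{N-1}$ (yielding $\mathcal{A}_{k_1,\ldots,k_N}$) and the $N-1$ cases $j_N = j_t$ (yielding a pairwise linearization of $\phi_{k_t}\phi_{k_N}$ in the $t$-th slot), one obtains
\begin{equation*}
\mathcal{A}_{k_1,\ldots,k_N} \;=\; A_{k_N}\,\mathcal{A}_{k_1,\ldots,k_{N-1}} \;-\; \sum_{t=1}^{N-1}\sum_{\kappa} \mathcal{P}^{k_t k_N}_{\kappa}\,\mathcal{A}_{k_1,\ldots,\kappa,\ldots,k_{N-1}},
\end{equation*}
where in the $t$-th correction term the entry $k_t$ is replaced by $\kappa$. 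A quick sanity check against \eqref{eq:PurifyN=2} confirms the $N=2$ case. The base case $N=1$ gives $\mathcal{A}_k = A_k = \mathbf{A}_k$, hence $M_1 = 1$, which agrees with the empty-product convention $\prod_{t=1}^{0}(Kt+1) = 1$.

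For the inductive step I assume every order-$(N-1)$ pure basis function is already expressed as a linear combination of at most $M_{N-1}$ self-interacting basis functions $\mathbf{A}_{\bk'}$. The first term on the right-hand side becomes $A_{k_N}\,\sum c_{\bk'} \mathbf{A}_{\bk'} = \sum c_{\bk'} \mathbf{A}_{k_N\bk'}$, contributing at most $M_{N-1}$ non-zero coefficients among order-$N$ $\mathbf{A}$-basis entries. For each fixed $t$, the linearization \eqref{eq:linearize_product} has at most $K$ non-zero coefficients $\mathcal{P}^{k_t k_N}_\kappa$, and each resulting $\mathcal{A}_{k_1,\ldots,\kappa,\ldots,k_{N-1}}$ is, by the inductive hypothesis, a combination of at most $M_{N-1}$ self-interacting basis functions. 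Summing over $t = 1, \ldots, N-1$ and combining with the first contribution yields the one-step bound
\begin{equation*}
M_N \;\leq\; M_{N-1} + (N-1)\,K\,M_{N-1} \;=\; \bigl(1+(N-1)K\bigr)\,M_{N-1}.
\end{equation*}

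Telescoping from $M_1 = 1$ gives $M_N \leq \prod_{t=1}^{N-1}(Kt+1)$, which is the first claimed estimate. The second follows from the elementary inequality $Kt+1 \leq K(t+1)$ for $K \geq 1$, so that $\prod_{t=1}^{N-1}(Kt+1) \leq K^{N-1}\prod_{t=1}^{N-1}(t+1) = K^{N-1}\,N! \leq K^N N!$. The only subtlety worth flagging is that the recursion may produce repeated $\mathbf{A}_{\bk'}$ entries whose coefficients combine or cancel; this cannot increase the count of non-zero $\mathcal{P}^{\bk}_{\bk'}$, so the bound remains valid as an overestimate. This over-counting is in fact the main (mild) obstacle—any attempt at sharpening the constant would require a careful combinatorial analysis of collisions among the index tuples $(k_1,\ldots,\kappa,\ldots,k_{N-1})$ produced at different levels of the recursion, which the proposition does not require.
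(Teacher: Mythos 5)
Your proof is correct and follows essentially the same route as the paper's: both derive the one-step bound $M_N \leq (1+K(N-1))M_{N-1}$ from the recursion \eqref{equa:Purify} and telescope from $M_1=1$. You simply fill in more detail (the term-by-term counting and the elementary estimate $Kt+1\leq K(t+1)$) than the paper's one-line argument.
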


The bound $K$ in Proposition \ref{proposition:bound} is basis dependent. For example, if particles are one-dimensional and $\phi_k$ are Chebyshev polynomials, then $K = 2$. For complex trigonometric polynomials, $K = 1$. For spherical harmonics, $K$ grows with the maximum degree. In practice the purification operator $\purifymat$ is actually sparser than the pessimistic bound in Proposition \ref{proposition:bound}; see Figure \ref{fig:sparsity}. The sub-asymptotic behavior of the non-zero coupling coefficients (cf. Figure \ref{fig:ChebBound}) is attributable to the fact that overlapping non-zero terms combine in the purification \eqref{eq:linearizek'}, which is not taken into account in the derivation of the general upper bound approximation in Proposition \ref{proposition:bound}; see also our discussion of Example \ref{example:cancellationCoeffs} below.

\begin{figure}[h!]
     \begin{subfigure}[b]{0.5\textwidth}
        \centering
        \includegraphics[width=3in]{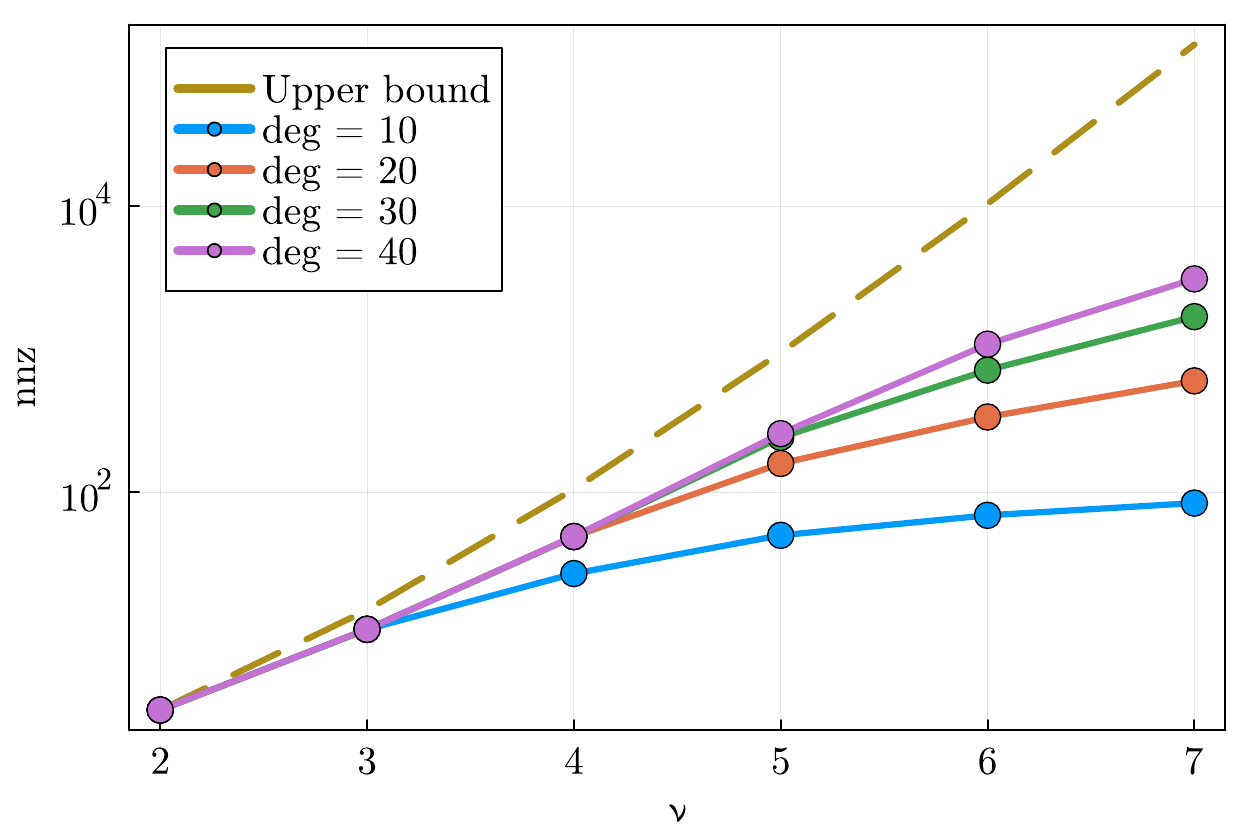}
        \caption{}
        \label{fig:ChebBound}
     \end{subfigure}
     \hfill
     \begin{subfigure}[b]{0.5\textwidth}
        \centering
        \includegraphics[width=2.1in]{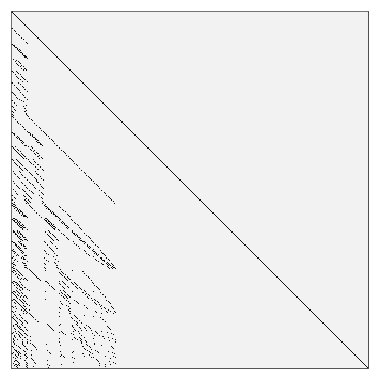}
        \caption{}
        \label{fig:sparsity}
     \end{subfigure}
     \caption{(a) Bound of non-zero terms in $\bk'$ summation in \eqref{eq:linearizek'} when using a Chebyshev basis for embedding one-dimensional particles ${\bf x}_j \in [-1,1]$; The higher the correlation order and the lower the degree, the more the bound is overestimated. (b) Sparsity pattern of $\purifymat$ for correlation order three and total degree = 20. Black pixels indicate non-zeros; sparsity $\approx 1.48\%$. $\mathcal{P}$ is triangular since the Chebyshev basis is total-degree preserving \ref{equa:prodspan_totdeg}.}
\end{figure}

\subsubsection{Implementation remarks}
\label{sec:implementation_remarks}
We now provide some information how the purification framework can be implemented in practice. Populating the matrix $\purifymat$ can be an expensive operation, but it needs to be done only once in a preprocessing step, and is highly parallelizable. A prototype implementation is provided in \cite{ACE1x}.

The first step is to evaluate the linearization coefficients $\mathcal{P}^{k_i k_j}_{\kappa}$ of $\phi_{k_i}\phi_{k_j}$. For some classes of polynomials, including the general family of Jacobi polynomials~\cite{LinearizeProductJacobi} and spherical harmonics~\cite{byerly1893elemenatary, yutsis1962mathematical}, they can be computed from known formulas. This is straightforward to generalize to products of polynomials.
However, in practice, we found it most straightforward to obtain the $\mathcal{P}^{k_i k_j}_{\kappa}$ coefficients by solving a least squares problem,
\begin{equation}
\min_{\mathcal{P}^{k_i k_j}_{\kappa}} \sum_{x \in \mathcal{X}}
\Big| \sum_{\kappa} \mathcal{P}^{k_i k_j}_{\kappa} \phi_{\kappa}(x) - \phi_{k_i}(x)\phi_{k_j}(x) \Big|^2,
\label{equa:coefls}
\end{equation}
where $\mathcal{X}$ is taken as random samples from a suitable distribution that makes this problem well posed \cite{cohen2013stability}. 
Note that all instances of \eqref{equa:coefls} for different $(k_i, k_j)$ pairs utilize the same design matrix, and hence one requires only a single QR factorization. 
If the $\{\phi_{k}\}_{k \in \mathcal{I}}$ are, to name one important example, a set of orthogonal polynomials, then (\ref{equa:coefls}) can be solved in a numerically stable and efficient manner.

As the second step we can now populate the matrix $\purifymat$ which represents the purification operator (cf. Corollary \ref{corollary:purifymat}).
The first step already gives us all the entries $\mathcal{P}^{k_1 k_2}_{\bk}$. For $N > 2$ we can proceed recursively. For each order $N = 2, 3, \cdots, \mathcal{N}$, we update the list of non-zero indices in $\mathcal{P}$ using the recursion relation Equation \eqref{equa:Purify}. We note that these update are highly parallelizable due to the following result, the proof of which is given in Appendix \ref{Appendix:Efficiency}. 

\begin{proposition}
\label{Proposition:parallelizable}
    For every ${\bk}$ of order $N \geq 2$, the purification operator entries $\mathcal{P}^{\bk}_{\bk'}$, where $\bk'$ has order $N'$, can be non-zero only if $\bk' = \bk$ or if $N' < N$.
\end{proposition}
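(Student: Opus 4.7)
The plan is to proceed by induction on the correlation order $N = \tord(\bk)$, using the explicit recursion \eqref{equa:Purify} (constructed in Appendix~\ref{Appendix:RecursionFormula}) which generalizes the explicit $N=2$ identity \eqref{eq:PurifyN=2}. The uniqueness part of Corollary~\ref{corollary:purifymat} will then let me read off the non-zero entries of $\purifymat$ directly from any valid such expansion of $\mathcal{A}_\bk$ in terms of $\{\mathbf{A}_{\bk'}\}_{\bk' \in \mathbf{K}'}$.

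\textbf{Base case.} For $N = 2$, the identity \eqref{eq:PurifyN=2} gives $\mathcal{A}_{k_1 k_2} = \mathbf{A}_{k_1 k_2} - \sum_\kappa \mathcal{P}^{k_1 k_2}_\kappa \mathbf{A}_\kappa$, so the only possibly non-zero entries $\mathcal{P}^{\bk}_{\bk'}$ occur at $\bk' = \bk$ (with coefficient $1$) or at singletons $\bk' = (\kappa)$, which have $\tord(\bk') = 1 < N$. \textbf{Inductive step.} Fix $\bk$ of order $N \geq 3$ and assume the claim for every $\bk''$ of order less than $N$. The recursion \eqref{equa:Purify} expresses $\mathcal{A}_\bk$ as $\mathbf{A}_\bk$ minus a finite combination of $\mathcal{A}_{\bk''}$ with $\tord(\bk'') < N$; this is exactly the decomposition \eqref{eq:decomposeWN} made explicit by collapsing subsets of summation indices in $\mathbf{A}_\bk$ and repeatedly linearizing the resulting on-site products via \eqref{eq:linearize_product}. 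By the inductive hypothesis, each $\mathcal{A}_{\bk''}$ expands as $\sum_{\bk'} \mathcal{P}^{\bk''}_{\bk'} \mathbf{A}_{\bk'}$ with non-zero contributions only at $\bk' = \bk''$ or at $\bk'$ with $\tord(\bk') < \tord(\bk'') < N$. Substituting and collecting, $\mathcal{A}_\bk$ is written as $\mathbf{A}_\bk$ plus a linear combination of $\mathbf{A}_{\bk'}$ with $\tord(\bk') < N$ only. The uniqueness statement in Corollary~\ref{corollary:purifymat} identifies these coefficients with the entries $\mathcal{P}^{\bk}_{\bk'}$ of the purification operator, yielding the claim.

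The main obstacle is really offloaded to \eqref{equa:Purify}: one must be sure that collapsing any non-trivial partition of the indices $\{j_1,\dots,j_N\}$ in $\mathbf{A}_\bk$ produces only lower-order $\mathcal{A}_{\bk''}$ terms. This holds because a coincidence of $s \geq 2$ indices contributes a single-site product of $s$ basis functions, which by \eqref{eq:linearize_product} linearizes into a sum of single $\phi_\kappa$'s on that site, strictly reducing the correlation order by $s - 1 \geq 1$. Once this structural fact about \eqref{equa:Purify} is in hand, the induction is routine, and the stated parallelizability of building $\purifymat$ follows immediately: the row corresponding to $\bk$ depends only on rows of strictly smaller order (plus the diagonal entry $\mathcal{P}^\bk_\bk = 1$), so all rows of a given order can be computed simultaneously once lower orders have been populated.
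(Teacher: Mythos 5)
Your proof is correct and takes essentially the same route as the paper's: both rest on unwinding the recursion \eqref{equa:Purify} to exhibit $\mathcal{A}_{\bk} = \mathbf{A}_{\bk} - (\text{terms of correlation order} < N)$ and conclude by induction on the order. Your version is, if anything, slightly more carefully organized, since you make the induction, the justification of the key decomposition (collapsing coincident summation indices and linearizing the on-site products), and the appeal to the uniqueness statement of Corollary~\ref{corollary:purifymat} all explicit.
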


\subsection{Orthogonal Symmetric Basis}
\label{section:orthogonalsymbasis}
In this section we discuss how the construction of the foregoing sections can be used to construct an orthogonal basis for symmetric functions, in particular highlighting the connection between purification and orthogonality. We now consider the simpler setting of approximating a {\em symmetric} function $f : \Omega^N \to \mathbb{F}$, i.e., it is invariant under permutations of its arguments,
\[
    f(x_1, \dots, x_N) = f(x_{\sigma 1}, \dots, x_{\sigma N}) \qquad \forall \sigma \in S_N;
\]
or $f \circ \sigma = f$ in short. 
It is straightforward to see that, if $\{\phi_k\}_{k\in \mathcal{I}}$ is a basis for one-particle functions, then 
\[
    \mathcal{A}^{(N)} := \big\{ \mathcal{A}_{\bf k} \big| {\bf k} = (k_1, \dots, k_N)
            \text{ ordered} \big\}, 
\]
is a basis of symmetric functions on $\Omega^N$; see \cite{bachmayr2023polynomial} for a detailed discussion. The purification operation $\mathcal{A} = \mathcal{P} {\bf A}$ yields an efficient evaluation scheme for this basis. 

Likewise, it is easy to show~\cite{bachmayr2023polynomial} that the set
\[
    \mathbf{A}^{(N)} := \big\{ \mathbf{A}_{\bf k} \big| {\bf k} = (k_1, \dots, k_N)
            \text{ ordered} \big\}
\]
forms a basis of symmetric functions on $\Omega^N$ if $1 \in {\rm span}(\phi_{k})$. The ``self-interactions'' can be absorbed in this case, 
implying ${\rm span}(\mathbf{A}^{(N)}) = {\rm span}(\mathcal{A}^{(N)})$.

Suppose now that $\phi_k$ are orthonormal with respect to the $L^2_\mu(\Omega)$-inner product, 
\begin{equation}
\label{equa:standL2innerprod}
    \langle \phi_k, \phi_{k'} \rangle_{L^2_\mu(\Omega)} = \int_{L^2(\Omega)} \phi_{k} \phi_{k'} \,d \mu \ = \delta_{kk'}.
\end{equation}
Although $\mathbf{A}^{(N)}$ and $\mathcal{A}^{(N)}$ have the same spanning space provided $1 \in {\rm span}(\phi_{k})$, only $\mathcal{A}^{(N)}$ inherits the orthogonality \eqref{equa:standL2innerprod}: it is orthogonal (though not orthonormal) with respect the induced $L^{2}_{\mu^{\otimes N}}(\Omega^N)$-inner product. 
For notational simplicity we demonstrate this for the special case of $N = 2$: if $\bk = (k_1, k_2)$ and $\bk' = (k_1', k_2')$ then 
\begin{equation} \label{equa:innerproductL2N=2} 
\begin{split}
    & \big\langle \mathcal{A}_{\bk}, \mathcal{A}_{\bk'} \big\rangle_{L^2_{\mu^{\otimes 2}}(\Omega^2)}\\
    & = \big\langle \phi_{k_1}(x_1)\phi_{k_2}(x_2) + \phi_{k_1}(x_2)\phi_{k_2}(x_1), \phi_{k'_1}(x_1)\phi_{k'_2}(x_2) + \phi_{k'_1}(x_2)\phi_{k'_2}(x_1) \big\rangle_{L^2_{\mu^{\otimes 2}}(\Omega^2)} \\
    & = 2\delta_{k_1 k'_1}\delta_{k_2 k'_2} + 2\delta_{k_1 k'_2}\delta_{k_2 k'_1},
\end{split}
\end{equation}
which equals $0$, $2$ or $4$ depending on the values of $\bk$ and $\bk'$. Indeed, following a similar argument in \eqref{eq:exp_pure_1} and the above calculation of $\langle \mathcal{A}_{\bk}, \mathcal{A}_{\bk'} \rangle_{L^2_{\mu^{\otimes 2}}(\Omega^2)}$ \eqref{equa:innerproductL2N=2} one can show that a series expansion of symmetric function with $\mathcal{A}^{(N)}$ is equivalent to a multivariate symmetric orthogonal polynomial series with orthogonality $L^2_{\mu^{\otimes N}}(\Omega^ N)$. In contrast $\mathbf{A}^{(N)}$ is clearly not orthogonal in the $L^2$ sense, a fact which we explore numerically in more detail in Section~\ref{section:ConditionNumber}.

\subsubsection{Regularization of cluster expansions for multisets} \label{section:PurificationAndOrthogonality}
Our observations about orthogonality for parameterizing symmetric functions in the previous section naturally lead to a discussion of regularization choices for atomic cluster expansion models for multi-sets. We now explore the relationship between the canonical and self-interacting expansions from the point of view of regularization, leading to the observation that one may equivalently think of the purification operator in \eqref{eq:linearizek'} as a special regularizer on the self-interacting expansion. In what follows we consider classical Tikhonov regularizations of the form
\begin{equation}
\label{eq:genericRegLSQ}
   \min_{\bf c} \|\Psi \mathbf{c} - \mathbf{y}\|^2_2 + \|\Gamma \mathbf{c}\|^2_2,
\end{equation}
where $\Psi$ is the design matrix corresponding to $\mathcal{A}$ or $\mathbf{A}$ respectively, $\Gamma$ is the regularizer and $\mathbf{c}$ are the coefficients in the corresponding expansions. Consider an inner product for the $\phi_k$ defined by \eqref{equa:standL2innerprod}. Given a maximum correlation order $\mathcal{N}$ one can define the following natural inner product for the canonical expansion \eqref{equa:CanonicalExpansion} by considering each $v_N$ as an independent function on $\Omega^N$:
\begin{equation}
\label{eq:canonical_expansion_innerproduct}
\langle V^{\mathcal{N}}, V'^{\mathcal{N}} \rangle = \sum^{\mathcal{N}}_{N = 0} \langle v_N, v'_N \rangle_{L^2_{\mu^{\otimes N}}(\Omega^N)}.
\end{equation}
We remind ourselves here that one straightforwardly finds that the canonical $\mathcal{A}_{\bk}$ basis is orthogonal with respect to the inner product in the sense of \eqref{equa:innerproductL2N=2}. We can thus assume without loss of generality that $\mathcal{A}_{\bk}$ is orthonormal with respect to this inner product, as we can always rescale the basis. With $\mathcal{A}_{\bk}$ assumed orthonormal the Tikhonov regularization term with $\Gamma = I$ is found to simply be
\begin{align*}
    \|\mathbf{c}\|^2_{2} & = \sum_{k} c^2_k + \sum_{k_1, k_2} c^2_{k_1, k_2} + \cdots \sum_{k_1, \cdots, k_{\mathcal{N}}} c^2_{k_1, \cdots, k_\mathcal{N}} \\
    & = \|v_1\|^2_{L^2_\mu(\Omega)} + \cdots + \|v_\mathcal{N}\|^2_{L^2_{\mu^{\otimes \mathcal{N}}(\Omega^\mathcal{N})}},
\end{align*}
which is consistent with the inner product in \eqref{eq:canonical_expansion_innerproduct}. An analogous definition for the self-interacting expansion yields an inner product
\begin{align*}
\langle U^{\mathcal{N}}, U'^{\mathcal{N}} \rangle 
& = \int_{L^2(\Omega)} u_1 \bar{u}'_1(x) + u_2\bar{u}'_2(x, x) + \cdots + u_{\mathcal{N}}\bar{u}'_{\mathcal{N}}(x, \cdots, x) \,d \mu\ \\
 & + \int_{L^2_{\mu^{\otimes 2}} (\Omega^2)} u_2\bar{u}'_2(x_1, x_2) + \cdots + \int_{L^2_{\mu^{\otimes \mathcal{N}}}(\Omega^{\mathcal{N}})} u_{\mathcal{N}}\bar{u}'_{\mathcal{N}} \,d \mu^{\otimes \mathcal{N}} \ .
\end{align*}
However, applying $\Gamma = I$ Tikhonov regularization for the self-interacting basis one also obtains a Tikhonov term of the form
\begin{equation}
    \|\mathbf{c}\|^2_{2} = \|u_1\|^2_{L^2_\mu(\Omega)} + \cdots + \|u_\mathcal{N}\|^2_{L^2_{\mu^{\otimes \mathcal{N}}}(\Omega^\mathcal{N})},
\end{equation}
where the self-interacting terms in $\langle U^{\mathcal{N}}, U'^{\mathcal{N}} \rangle$ are completely absent. This discussion motivates an alternative view of the linear purification operator $\mathcal{P}$ with $\mathcal{A} = \mathcal{P} \mathbf{A}$ in \eqref{eq:linearizek'} as a regularizer in the Tikhonov sense: To regularize $U^{\mathcal{N}}$ in the sense of inner products of $V^{\mathcal{N}}$, we may set $\Gamma = \mathcal{P}^{-\top}$ such that all self-interacting terms are accounted for. We explore the improved behavior resulting from this alternative regularization choice in the self-interacting basis in the numerical experiments in Sections~\ref{Section:NumericalExp} and~\ref{section:MLIPs}, where among other results we show that the purification regularizer $\Gamma = \mathcal{P}^{-\top}$ applied to the self-interacting expansion produces equivalently improved results as using the canonical expansion. 

As a final remark, we note that while we used $\Gamma = I$ for the above observation for simplicity, it is straightforwardly generalized to the case where $\Gamma$ is any invertible diagonal matrix which notably includes the important case of the smoothness prior which we discuss in more detail in the numerical experiments in Section~\ref{Section:NumericalExp}.
\subsection{Purification and $G$-Symmetrization}
\label{section:simpleGExamples}
The discussion up to this point suggests that the canonical cluster expansion basis $\mathcal{A}_{\bk}$ can be evaluated efficiently in a way that overcomes the naive $O(\binom{J}{N})$ cost. However, in practice one usually applies a further transformation of the $\mathcal{A}_{\bk}$ basis to impose a Lie group symmetry \cite{ACECompleteness}. This typically results in a structural sparsification of $\mathcal{A}$. 
In the following paragraphs, we demonstrate that the purification operator preserves many aspects of this symmetry-based sparsification, analogous to Proposition \ref{proposition:preserving}. We will focus on the orthogonal group $O(n)$ with $n \leq 3$ (the most common cases for applications) and consider an index set $\mathbf{K}^D_{\mathcal{N}}$, where $D$ is a given bound on the total degree ${\rm deg}(\bk)$ and $\mathcal{N}$ is the maximum correlation order. 

\subsubsection{Review of $G$-symmetrization}
\label{section:G-sym}
Suppose that a target function we wish to approximate satisfies the $G$-invariance \eqref{eq:V_invar}, then it would be preferrable in many scenarios if the approximation scheme can exactly preserve this invariance. 
We therefore review how to extend the ACE construction to an invariant basis. 

In the abstract we assume that $\phi_{k}$ is chosen such that we have a representation $\rho = (\rho_{k k'})$ of the group action, 

\begin{equation}
\phi_{k} \circ g = \sum_{k'} \rho_{k k'}(g) \phi_k,    
\end{equation}
where $g \in G$ and for each $k$ the sum over $k'$ is a finite sum. For all classical Lie groups, such representations are known. 
Then $\mathcal{A}_{\bk'}$ and $\mathbf{A}_{\bk'}$ can be symmetrized 
to obtain the invariant $\mathcal{B}_{\alpha}$ or $\mathbf{B}_{\alpha}$ bases (i.e., $\mathcal{B}_{\alpha} \circ g = \mathcal{B}_{\alpha}$ and
    $\mathbf{B}_{\alpha} \circ g = \mathbf{B}_{\alpha}$), 
\begin{equation}
\label{equa:symmetrization}
    \mathcal{B}_{\alpha} := \sum_{\mathbf{k'}} \mathcal{C}_{\mathbf{k'}}^{\alpha} \mathcal{A}_{\mathbf{k'}}, 
    \qquad \text{and} 
    \qquad
    \mathbf{B}_{\alpha} := \sum_{\mathbf{k'}} \mathcal{C}_{\mathbf{k'}}^{\alpha} \mathbf{A}_{\mathbf{k'}},
\end{equation}
where $\mathcal{C}_{\mathbf{k'}}^{\alpha}$ are sometimes called \textit{generalized Clebsch--Gordan coefficients} and the indices $\alpha$ are simply indexing all possible invariant basis functions that can be generated by symmetrizing $\mathcal{A}_{\bk'}$ or $\mathbf{A}_{\bk'}$. 
We refer to \cite{ACECompleteness, batatia2023general, batatia2023equivariantmat} for the details of this construction. 

This construction forms the basis of a number of highly successful machine learning architectures thanks to its flexibility which allows a vast design space, from linear models \cite{ACECompleteness, LinearACE4OrgMol} and a straightforward extension to non-linear forms \cite{DrautzACE, drautzcarbon} to models using message passing neural networks \cite{Batatia2022mace, batatia2022e3design}. This leads to a wide range of applications of ACE including featurization for jet-tagging \cite{Munoz_2022}, parameterization of wave functions \cite{drautzwavefunc, ACESchrodinger}, learning Hamiltonian operators \cite{ACEHam} and modeling molecular dynamics for materials and molecules~\cite{Batatia2022mace, PACE, MACEFF}.

A convenient feature of our proposed purification framework is that the symmetrization and the purification operations are both represented as sparse matrix multiplications and hence can be merged into a single sparse matrix operation. More precisely, we can pre-compute sparse matrices $\mathcal{C}, \mathcal{P}$ and $\mathcal{C}_{\rm p} = \mathcal{C} \cdot \mathcal{P}$, such that
\begin{equation}
    \mathcal{B} = \mathcal{C}\mathcal{P}\mathbf{A}
        =: \mathcal{C}_{\rm p} \mathbf{A},
\end{equation}
where $\mathbf{A} = \{\mathbf{A}_{\bk'}\}_{\bk' \in \mathbf{K}'}$ and $\mathcal{B} = \{\mathcal{B}_{\alpha}\}_{\alpha}$. 

\subsubsection{Effect of symmetrization for $G = O(1)$}
\label{section:G=O(1)}
A function is $O(1)$ invariant if it is invariant under reflection through the origin. We consider this simple case before moving on to the more challenging $O(2)$ and $O(3)$ cases. 
We assume that particles belong to the closed interval $\Omega = [-1, 1]$ and use  the set of monomials as one particle basis, i.e., 
\begin{equation}
\phi_{k}(x) = x^k.
\end{equation}
The group $O(1)$ consists of only two elements: the identity $g_1 x = x$ and the reflection $g_{-1} x = -x$. We have the representation $\phi_k \circ g_\sigma = \sigma^k \phi_k$. From this observation it can be readily seen that the symmetrization becomes simply a filtering operation, i.e., we can identify the invariant basis indices $\alpha$ with all those tuples $\alpha = {\bf k}$ such that $\sum \bk := \sum_{k \in \bk} k$ is even. 
An $O(1)$-invariant parameterization can now be simply written as  
\begin{equation}
V^{{\mathcal{N}}}(\bx_1, \cdots, \bx_J) = \sum_{\sum \bk = \even} c_{\bk} \mathcal{A}_{\bk}(\bx_1, \cdots, \bx_J).
\end{equation}
Therefore, the index set of interest in the many-body expansion is reduced to 
\begin{equation}
    \mathbf{K} := \mathbf{K}^{D}_{\mathcal{N}} \cap \big\{\bk \,\big|\, {\textstyle \sum \bk = \even } \big\}.
\end{equation}
Observe that the linearization of $\phi_{k}\phi_{k'}$ contains only a single term $\phi_{k+k'}$ with $k + k' \in \mathbf{K}^{D}_{\mathcal{N}}$. We also note that the condition \eqref{equa:prodspan_totdeg} in Lemma \ref{proposition:preserving} is satisfied and the result applies. In this case we have
\begin{equation}
\label{equa:prodspan_exacttotdeg}
\phi_{k_1} \phi_{k_2} \in \text{span}\{\phi_\kappa | \tdeg(\kappa) = \tdeg(k_1) + \tdeg(k_2)\}.
\end{equation}
Following the proof of Proposition \ref{proposition:preserving} one can similarly observe that the linearization \eqref{eq:linearizek'} with monomials is exactly total-degree preserving. In other words \eqref{eq:linearizek'} can be performed with indices $\bk'$ such that $\tdeg(\bk') = \tdeg(\bk)$. Importantly, the number of non-zero entries of the transformation $\mathcal{P}$ hits exactly the lower bound of the proven estimation in Proposition \ref{proposition:bound} (i.e. $K = 1$), attaining the best possible sparsity.

\subsubsection{Effect of symmetrization for   $G = SO(2)$}
\label{section:G=O(2)}
We now consider another academic example, taking $G = SO(2)$ and $\Omega$ the unit circle. This setting can equivalently be understood as the translation group on the torus. Thus, we take 
\[
    \Omega = \mathbb{T} = (-\pi, \pi], 
\]
supplied with periodic boundary conditions. The translation group $G$ can be identified with $\mathbb{T}$ itself. To emphasize the connection with the unit circle and the rotation group we write $x = \theta$.

We then define the one-particle basis 
\begin{equation}
    \phi_{k}(\theta) = e^{ik \theta},
\end{equation}
where $k \in \mathbb{Z}$. For any $g_{\theta_0} \in G$ we have the representation 
\[
    \phi_k \circ g_{\theta_0} = e^{ik \theta_0}\phi_k(\theta).
\]
To satisfy invariance, the symmetrization once again becomes a simple filtering operation. To see this more clearly, we have:
$$
\begin{aligned}
    \prod^N_{t = 1} \phi_{k_t}(\theta_t + \theta_0)& = \prod^N_{t=1} e^{ik_t \theta_0}\phi_{k_t}
    = e^{i (\sum\mathbf{k}) \theta_0}\prod^N_{t = 1} \phi_{k_t}(\theta_t).\\
\end{aligned}
$$
Hence, the invariant basis indices $\alpha$ are tuples $\bk$ such that $\sum \bk = 0$. Consequently, multi-set functions $V^{\mathcal{N}}$ which are permutation and $SO(2)$ invariant can be parameterized by
\begin{equation}
V^{\mathcal{N}}(\theta_1, \cdots, \theta_J) = \sum_{\sum \bk = 0} c_{\bk} \mathcal{A}_{\bk}(\theta_1, \cdots, \theta_J).
\end{equation}
Analogous to monomials in the $O(1)$ case, the linearization of $\phi_{k}\phi_{k'}$ contains only a single term $\phi_{k+k'}$ with $k+k' \in \mathbf{K}^{D}_{\mathcal{N}}$ (i.e. it is exactly total degree preserving). By a similar observation as in Section~\ref{section:G=O(1)}, following the proof of Proposition \ref{proposition:preserving},
one can show that every non-zero term $\mathcal{P}_{\bf k'}^{\bf k}$ in the purification satisfies $\sum {\bf k}' = 0$.
This entails that the purification can be performed with only basis elements from the index set of interest $\mathbf{K} := \mathbf{K}^{D}_{\mathcal{N}} \cap \big\{\bk | \sum \bk = 0\big\}$ and again the number of non-zero entries of the transformation $\mathcal{P}$ attains the lower bound of the proven estimate in Proposition \ref{proposition:bound}.

\subsubsection{Effect of symmetrization for   $G = O(3)$}
\label{section:G=O(3)}
Finally, we turn to the case of isometry invariance in three dimensions, i.e. $G = O(3)$,  which is the most relevant for applications. 

Following \cite{DrautzACE, ACECompleteness}, we use the one particle functions $\phi_{nlm}: \Omega \rightarrow \mathbb{C}$ defined by 
\begin{equation}
    \label{equa:O(3)1pbasis}
    \phi_{nlm}(\mathbf{r}) = R_n(r)Y^{m}_l(\mathbf{\hat{r}}), 
\end{equation}
where $ \Omega = [0, r_{\text{cut}}] \times \mathbb{S}^2 \subset \mathbb{R}^3$ with $r_{\text{cut}}>0$. Here $r = |\mathbf{r}|$, $\mathbf{r} = r\mathbf{\hat{r}}$ and $k = (n, l, m)$ is a multi-index. In \eqref{equa:O(3)1pbasis} $\{R_n\}_{n \in \mathbb{N}_0}$ is a basis of linearly independent polynomials on $[0, r_{\text{cut}}]$ and $Y^{m}_l$ are standard complex spherical harmonics \cite{thomas2018tensor, gerken2021geometric}. It is possible to generalize the construction to allow radial bases $R_{nl}$ (e.g., three-dimensional Zernike polynomials), but we omit this as it introduces additional notational complexity. 

From the representation of spherical harmonics, following \cite{ACECompleteness}, any permutation and $O(3)$ invariant multi-set functions $V^{\mathcal{N}}$ can then be parameterized by
\begin{equation*}
\begin{aligned}
V^{\mathcal{N}}(\br_1, \cdots, \br_J) 
& = \sum_{\substack{\bn \\ \sum \bl = \even}} c_{\bn \bl i} \sum_{\sum \bm = 0} \mathcal{C}^{\bn \bl i}_{\bm}\mathcal{A}_{\bn \bl \bm}(\br_1, \cdots, \br_J) \\
& =: \sum_{\substack{\bn, \bl, i : \\ \sum \bl = \even}} c_{\bn \bl i} \mathcal{B}_{\bn \bl i}(\br_1, \cdots, \br_J),
\end{aligned}
\end{equation*}
where $i$ enumerates all possible symmetric couplings of the generalized Clebsch--Gordan coefficients \cite{ACECompleteness} and $\mathcal{C}^{\bn \bl i}_{\bm}$ are constructed from the representation of the spherical harmonics \cite{DrautzACE, ACECompleteness, RecursiveEvalN-bodyequi}. The constraint $\sum \bl = \text{even}$ and $\sum \bm = 0$ are, respectively, due to the reflection symmetry and rotation symmetry. This corresponds to $\alpha = (\bn \bl i)$ in \eqref{equa:symmetrization}, which entails the index set of interest,
\begin{equation}
\label{equa:O(3)sparsify}
\mathbf{K} := \mathbf{K}^{D}_{\mathcal{N}} \cap \big\{(\bn, \bl, \bm) \,\big|\, {\textstyle \sum \bm = 0, \sum \bl = \even} \big\},
\end{equation}
where $\deg(k) = \deg(nlm)$ is defined as the sum $n + l$ and $\deg(\bk)$ can be naturally extended as $\sum_{t} n_t + l_t$. In this setting, since $|m| \leq l$, it is common to not include $m$ in the definition of total degree. This also ensures that the basis is closed under the symmetrization operator.

As in Sections ~\ref{section:G=O(1)} and \ref{section:G=O(2)}, the key to showing that no additional basis functions are needed for purification lies in the properties of the linearization of $\phi_{n_1 l_1 m_1}\phi_{n_2 l_2 m_2}$. 

The following proposition is a special case of Corollary \ref{corollary:purifymat} when $\mathbf{K}=\mathbf{K'}$. 

\begin{proposition}
    With definition of $\phi_{nlm}$ and $\mathbf{K}$ in \eqref{equa:O(3)1pbasis} and \eqref{equa:O(3)sparsify}, for each $\bk \in \mathbf{K}$, $\mathcal{A}_{\bk}$ can be 
    expressed uniquely as a linear combination of the $\{\mathbf{A}_{\bk'}\}_{\bk' \in {\bf K}}$ basis. That is, there exists a unique (invertible) linear operator $\purifymat = (\mathcal{P}_{\bf k'}^{\bf k})_{{\bf k} \in {\bf K}, {\bf k'} \in {\bf K}}$, such that 
    \begin{equation}
    \label{eq:linearizek'_O(3)}
    \mathcal{A}_{\bk} = \sum_{\bk' \in \mathbf{K}}\mathcal{P}^{\bk}_{\bk'}\mathbf{A}_{\bk'}.
    \end{equation}
\end{proposition}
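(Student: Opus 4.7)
The plan is to invoke Corollary~\ref{corollary:purifymat} for the existence and uniqueness of a purification operator $\purifymat$ taking values in an extended index set $\mathbf{K}'$, and then verify that for the particular choice of one-particle basis \eqref{equa:O(3)1pbasis} and index set \eqref{equa:O(3)sparsify} one can in fact take $\mathbf{K}' = \mathbf{K}$. Concretely, it suffices to check three closure properties for the linearization of the pointwise product $\phi_{n_1 l_1 m_1} \phi_{n_2 l_2 m_2}$: (i) total-degree preservation in the $n+l$ grading, (ii) conservation of the magnetic quantum sum $m$, and (iii) preservation of the parity of the angular sum $l$. These three together imply that the recursion \eqref{equa:Purify} never exits $\mathbf{K}$.

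First I would recall that the product of two complex spherical harmonics decomposes as
\begin{equation*}
Y^{m_1}_{l_1}(\hat{\br}) Y^{m_2}_{l_2}(\hat{\br})
= \sum_{l = |l_1 - l_2|}^{l_1+l_2} G^{l, m_1+m_2}_{l_1 m_1, l_2 m_2} Y^{m_1+m_2}_l(\hat{\br}),
\end{equation*}
where the Gaunt coefficients $G^{l, m_1+m_2}_{l_1 m_1, l_2 m_2}$ vanish unless $l_1 + l_2 + l$ is even. Combined with the fact that $R_{n_1} R_{n_2}$ expands in $\{R_{n} : n \le n_1 + n_2\}$, this yields a linearization
\begin{equation*}
\phi_{n_1 l_1 m_1}\phi_{n_2 l_2 m_2} = \sum_{\kappa = (n,l,m)} \purifymat^{k_1 k_2}_\kappa \phi_{n l m}
\end{equation*}
in which every nonzero $\kappa = (n,l,m)$ satisfies $m = m_1 + m_2$, $l \equiv l_1 + l_2 \pmod 2$, and $n + l \le (n_1 + l_1) + (n_2 + l_2)$.

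Next I would feed these three properties into the purification recursion of Theorem~\ref{theorem:spanKK'} (equivalently, Eq.~\eqref{equa:Purify}). At each step of the recursion one replaces a pair of factors $\phi_{k_i}\phi_{k_j}$ in $\bk$ by a single factor $\phi_\kappa$, shortening the tuple by one. The tuple invariants $\sum \bm$ and the parity of $\sum \bl$ are preserved by (ii) and (iii), since $m_i + m_j$ replaces $m_i, m_j$ and $l$ has the same parity as $l_i + l_j$. Property (i) combined with Proposition~\ref{proposition:preserving} (or Corollary~\ref{corollary:spanK'K'}) shows that $\tdeg(\bk')\le \tdeg(\bk)\le D$ along the recursion, so the intermediate indices never exceed the total-degree bound defining $\mathbf{K}^{D}_{\mathcal{N}}$. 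Hence every $\bk'$ produced by the recursion lies in $\mathbf{K}$, proving $\purifymat^{\bk}_{\bk'} = 0$ whenever $\bk'\notin\mathbf{K}$ and therefore that the purification operator restricts to a linear map $\mathbf{K}\to\mathbf{K}$.

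Uniqueness and invertibility follow at once: with $\mathbf{K}$ ordered by total degree, Proposition~\ref{proposition:preserving} together with Proposition~\ref{Proposition:parallelizable} shows that $\purifymat$ is block-triangular with nonzero diagonal, so the inverse exists and the expansion \eqref{eq:linearizek'_O(3)} is unique. The main subtlety is the parity condition (iii): one must rely on the Gaunt selection rule rather than merely the triangle inequality, as otherwise the recursion could break the $\sum \bl = \mathrm{even}$ constraint and force $\mathbf{K}'\supsetneq \mathbf{K}$. Once this parity bookkeeping is in place, the remainder of the argument is a direct specialization of Corollary~\ref{corollary:purifymat}.
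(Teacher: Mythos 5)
Your proposal is correct and follows essentially the same route as the paper: linearize the radial product in the $R_n$ basis and the angular product via the spherical-harmonic contraction rule (your Gaunt coefficients are the paper's generalized Clebsch--Gordan coefficients), then verify the three selection rules — total-degree preservation, $\sum\bm$ conservation, and parity of $\sum\bl$ — so that the recursion \eqref{equa:Purify} never leaves $\mathbf{K}$, with uniqueness and invertibility from the triangular structure of $\purifymat$. Your explicit emphasis on the parity selection rule (rather than just the triangle inequality) is exactly the point the paper makes via $C^{LM}_{l_1 l_2 m_1 m_2}$ vanishing unless $l_1+l_2+L$ is even.
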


\begin{proof}
We first show that any product of $\phi_{n_1 l_1 m_1}(\mathbf{r})\phi_{n_2 l_2 m_2}(\mathbf{r})$ can be linearized exactly in terms of a finite sum of $\phi_{\kappa}(\mathbf{r})$ \eqref{eq:linearize_product} with $\kappa \in \mathbf{K}$. Let $\phi_{n_1 l_1 m_1} = R_{n_1}Y^{m_1}_{l_1}$ and $\phi_{n_2 l_2 m_2} = R_{n_2}Y^{m_2}_{l_2}$. Since $\{R_n\}_{n \in \mathbb{N}}$ is a basis of polynomials, there exist finitely many $P^{n_1 n_2}_{\nu}$ such that $R_{n_1}R_{n_2} = \sum_\nu P^{n_1 n_2}_{\nu}R_{\nu}$. Then the linearization coefficients can be obtained by expanding $P^{n_1 n_2}_{\nu}$ and the Clebsch-Gordon coefficients $C^{L M}_{l_1 l_2 m_1 m_2}$ from the contraction rule of spherical harmonics \cite{byerly1893elemenatary, yutsis1962mathematical}. Therefore, Corollary \ref{corollary:purifymat} applies. It remains to show that $\mathbf{K} = \mathbf{K'}$. The total degree preserving property follows from the fact that the expansion over radial basis and spherical harmonics preserves the total degree. Conservation of $\sum \bm = 0$ follows from the fact that $C^{L M}_{l_1 l_2 m_1 m_2}$ is non-zero only if $m_1 + m_2 + M = 0$, with similar deduction as in Section~\ref{section:G=O(2)}. For $\sum \bl = \text{even}$, we first notice that $C^{L M}_{l_1 l_2 m_1 m_2}$ is non-zero only if $l_1 + l_2 + L = 0$. This entails that the linearization of any $\phi_{k_1}\phi_{k_2}$ preserves parity over the $l$ index, precisely:
\begin{equation}
\label{equa:prodspan_parity}
\phi_{n_1 l_1 m_1} \phi_{n_2 l_2 m_2} \in \text{span}\{\phi_{n_\kappa l_\kappa m_\kappa} | l_1 + l_2 \equiv l_\kappa \text{ mod } 2 \}.
\end{equation}
By a similar argument as in Proposition \ref{proposition:preserving}, one can see that the purification \eqref{eq:linearizek'} can be performed with indices preserving the original parity. Therefore, no basis outside $\mathbf{K}$ are needed for the transformation.
\end{proof}

\section{Numerical Experiments: Academic Examples}
\label{Section:NumericalExp}
In this section we present numerical experiments comparing basic properties of the canonical and self-interacting atomic cluster expansions including conditioning, effect of regularization choices and coefficient decay in the simplified regression context. We defer comparisons in the context of machine learning interatomic potentials, the primary application of the atomic cluster expansion framework, to Section~\ref{section:MLIPs}.

\subsection{Conditioning of the gram matrix}
\label{section:ConditionNumber}
In this section we explore the conditioning of the gram matrix for the canonical and self-interacting bases when $G = O(3)$ (cf. Section~\ref{section:G=O(3)}), the most interesting case for applications. Following the discussion in Section~\ref{section:PurificationAndOrthogonality} and using the fact that $\mathcal{B}_{\bn \bl i}$ is simply an orthogonal transformation of $\mathcal{A}_{\bn \bl \bm}$ the condition number of the gram matrix of the $\mathcal{B}_{\bn \bl i}$ basis with respect to the standard $L^2$ inner product should be 1 (up to diagonal scaling) when $N=J$. We present a numerical comparison of the obtained condition numbers for the gram matrices of the respective bases in Table \ref{table:CondTable3D}. Each sample $\bX = \msl x_j \msr_{j = 1}^J$ is drawn independently from $\mu^{\otimes J}$ where $\mu$ is chosen such that $\langle \phi_{nlm}, \phi_{n'l'm'} \rangle_{L^2_{\mu}(\Omega)} = \delta_{nn'}\delta_{ll'}\delta_{mm'}$, where $\Omega = [0, r_{\text{cut}}] \times \mathbb{S}^2$ as in Section~\ref{section:G=O(3)}. For each total degree, $(70 \times \text{basis size})$ data samples are used to form the full design matrix. The submatrices corresponding to each order $N$ are then extracted to compute the gram matrix, followed by a scaling with respect to the diagonal of each submatrix. We observe that the condition numbers of the $\mathcal{B}_{\bn \bl i}$ basis are all close to 1 and the improvement in conditioning is substantial when compared with the self-interacting $\mathbf{B}_{\bn \bl i}$ basis. Similar behavior is observed for $J>N$ but the significance of the effect decreases as $J$ increases  with fixed $N$, as seen in Figure \ref{fig:cond_graph}. The critical point observed in the condition number corresponding to the gram matrix of the self-interacting expansion in Figure \ref{fig:cond_graph} is observed to be present even without the diagonal rescaling of the Gram matrix. We have no satisfactory explanation for this observed phenomenon.

\begin{table}[h!]
\centering
\begin{tabular}{ p{1.0cm} p{2.4cm} p{1.3cm} p{1.3cm} p{1.3cm}
 p{1.3cm} p{1.3cm} }
 \hline
 \multicolumn{2}{c}{\hspace{7mm} Total degree} & 10 &  12 & 14 & 16 \\
 \hline
 $N = 2$ & Canonical & $1.3 \times 10^0$  & $1.3 \times 10^0$  & $1.3 \times 10^0$  & $1.2 \times 10^0$ \\
 
        & Self-interacting &  $1.8 \times 10^2$  & $3.8 \times 10^2$ & $7.8 \times 10^2$ & $1.2 \times 10^3$\\
\hline
 $N = 3$ & Canonical & $1.6 \times 10^0$ & $1.4 \times 10^0$ &  $1.6 \times 10^0$  & $1.5 \times 10^0$ \\
 
           & Self-interacting & $1.3 \times 10^3$ &$5.7 \times 10^3$ &$1.6 \times 10^4$ & $4.8 \times 10^4$ \\
\hline
 $N = 4$ & Canonical & $1.6 \times 10^0$ & $1.6 \times 10^0$ & $2.1 \times 10^0$ & $2.2 \times 10^0$\\
 
           & Self-interacting & $1.0 \times 10^3$ & $1.2 \times 10^4$ & $7.1 \times 10^4$ &$3.6 \times 10^5$ \\
\hline           
 $N = 5$ & Canonical & $1.9 \times 10^0$ & $2.2 \times 10^0$ & $2.2 \times 10^0$ & $2.9 \times 10^0$ \\
 
           & Self-interacting & $4.8 \times 10^2$ & $4.6 \times 10^3$ & $6.3 \times 10^4$ & $4.7 \times 10^5$ \\
\hline        
 $N = 6$ & Canonical & $1.7 \times 10^0$ & $2.0 \times 10^0$ & $2.6 \times 10^0$ & $3.8 \times 10^0$ \\
 
           & Self-interacting & $6.5 \times 10^1$ &$2.6 \times 10^3$ & $1.9 \times 10^4$ & $2.0 \times 10^5$ \\

\end{tabular}
    \caption{Condition numbers for gram matrices of the self-interacting and canonical bases with indicated total degree and basis sizes using the default parameters in {\tt ACE1x.jl},  $\mathcal{N} = 6$ and $ (70 \times \text{basis size})$ data samples to form the design matrix and extract the corresponding order $N$. A diagonal scaling is applied s.t. the expected canonical condition numbers are $1$.}
    \label{table:CondTable3D}
\end{table}

\begin{figure}[h!]
      \centering
     \captionsetup[subfigure]{justification=centering}
     \begin{subfigure}[b]{0.32\textwidth}
     \centering
         \includegraphics[width=\textwidth]{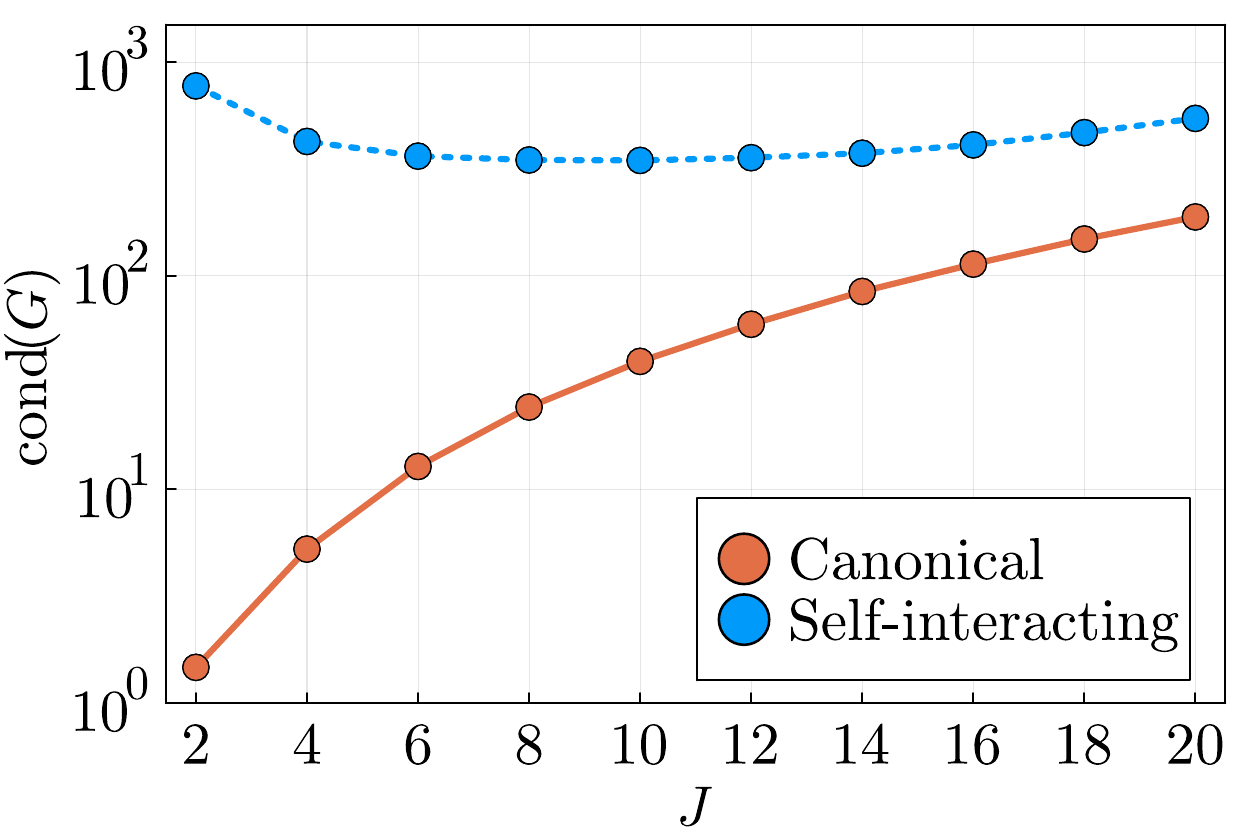}
         \caption{$N = 2$}
         \label{fig:cond_graph_2}
     \end{subfigure}
     \begin{subfigure}[b]{0.32\textwidth}
     \centering
          \includegraphics[width=\textwidth]{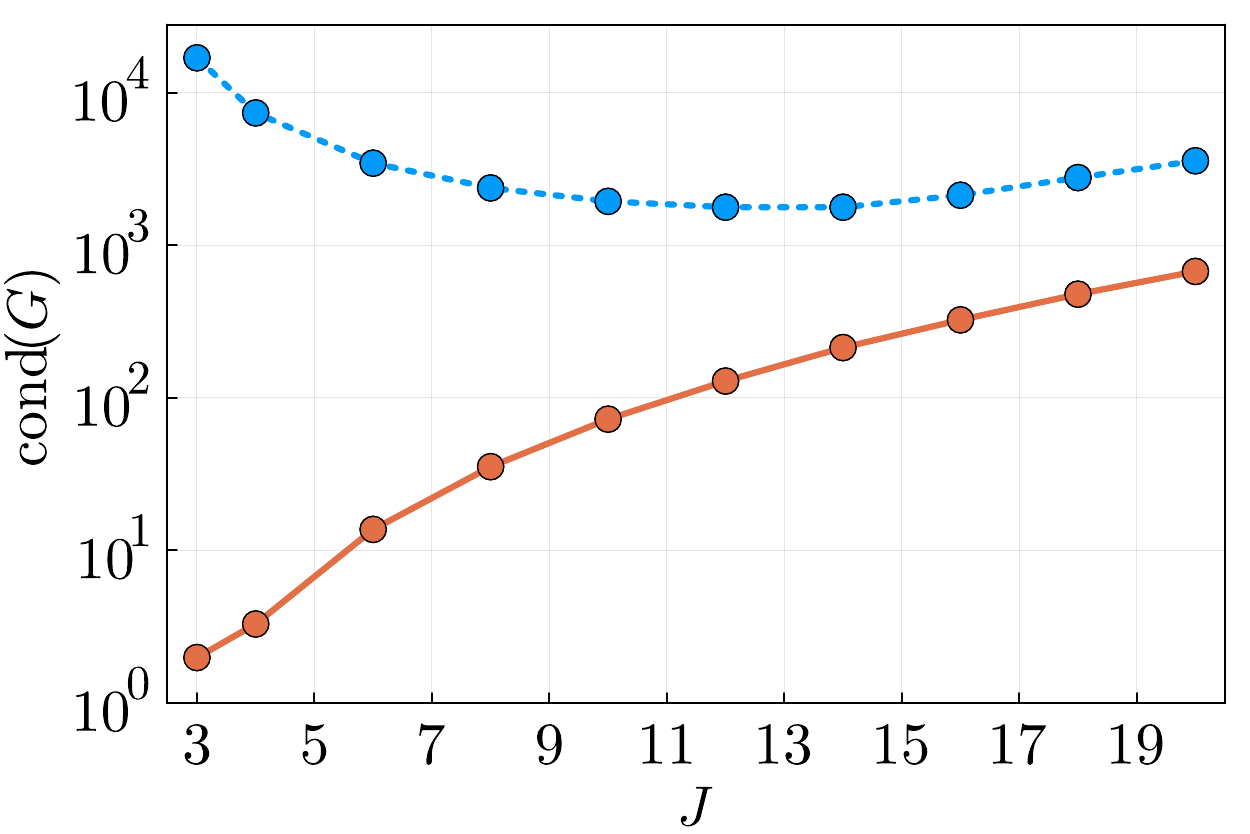}
          \caption{$N = 3$}
         \label{fig:cond_graph_3}
     \end{subfigure}
      \begin{subfigure}[b]{0.32\textwidth}
      \centering
         \includegraphics[width=\textwidth]{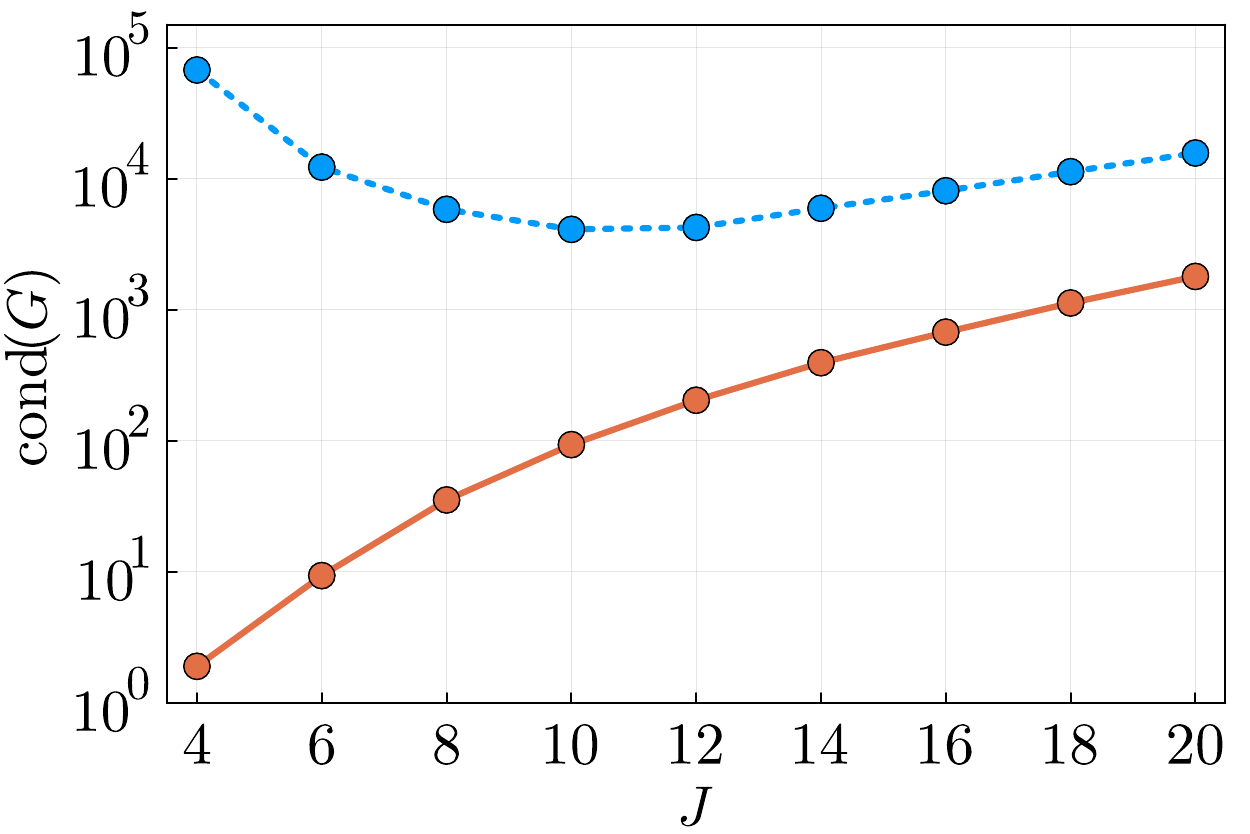}
     \caption{$N = 4$}
     \label{fig:cond_graph_4}
     \end{subfigure}
     \caption{Change of gram matrix condition numbers of the canonical and self-interacting bases with total degree $14$ for the indicated basis orders. A diagonal scaling is applied to the gram matrices consistent with Table \ref{table:CondTable3D}.}
     \label{fig:cond_graph}
\end{figure}

\subsection{Coefficient decay}
\label{section:decaycoeffs}
In the context of function approximation, the coefficient decay of a polynomial approximation is widely used as a proxy for error estimation, especially in the absence of other metrics. A noteworthy feature of orthogonal polynomial expansions is their predictable exponential coefficient decay for sufficiently well-behaved functions, which we investigate numerically for the canonical and self-interacting atomic cluster expansions. We define the following difficult to approximate test functions with $a>0$ and ${\bf x} \in [-1,1]^J$ (though we take $J = N$ in the initial tests):
\begin{align}
\label{eq:testfunctioncoeffs1}
f_{a}(\mathbf{x}) &= \frac{1}{1 + a\|\mathbf{x}\|^2},
\qquad 
\end{align}
\\

$f_{a}$ is a multi-variate generalization of the Runge function, a famously ill-conditioned function to approximate with polynomials.  Notice that $f_{a}$ is permutation invariant and analytic on the $N$-dimensional hypercube $[-1, 1]^{N}$ and increasingly difficult to approximate with increasing $a$, making them good test cases for the discussed ACE bases.

It was shown in \cite{lloydHypercube} that the natural notion of degree for problems of this kind is not the total degree but the {\em Euclidean degree}, defined by 
\[
    {\rm eucl}(\bk) := \big( {\textstyle \sum_{k \in \bk} k^2} \big)^{1/2}. 
\]
that is, the coefficients decay as $O(\rho^{-{\rm eucl}(\bk)})$, for some $\rho > 1$.

In Figure \ref{fig:decaycoeffs} we show representative examples of the observed coefficient decay. Comparing the two bases in this way, we observe that the slowest decaying coefficients corresponding to the product terms in the self-interacting basis are resolved with improved decay in the canonical expansion. Furthermore, while both bases notably show exponential coefficient decay, as seen by the linear behavior on the semi-logarithmic plots, the canonical basis agrees better with the decay behavior expected from quasi-optimal $N$-dimensional Chebyshev polynomial expansions \cite{lloydHypercube} indicated in Figure \ref{fig:decaycoeffs} by a dashed line.

This can potentially be exploited in effectiveness of sparsification and of regularization via a smoothness prior, as we demonstrate in the next section.

\begin{figure}[h!]
     \begin{subfigure}[b]{0.49\textwidth}
         \centering
         \includegraphics[width=\textwidth]{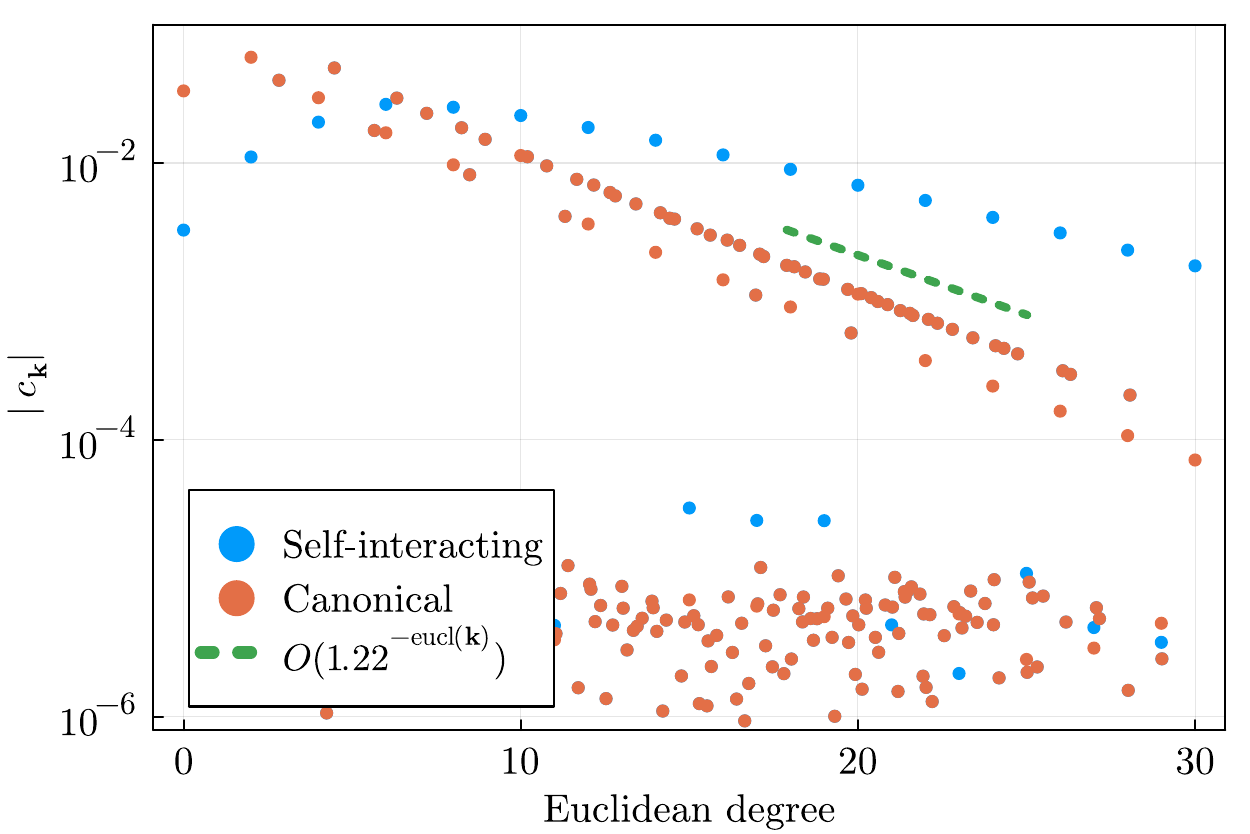}
         \caption {$N = J = 2$}
         \label{fig:decaycoeffs_J=N_1}
     \end{subfigure}
     \hfill
     \begin{subfigure}[b]{0.49\textwidth}
         \centering
         \includegraphics[width=\textwidth]{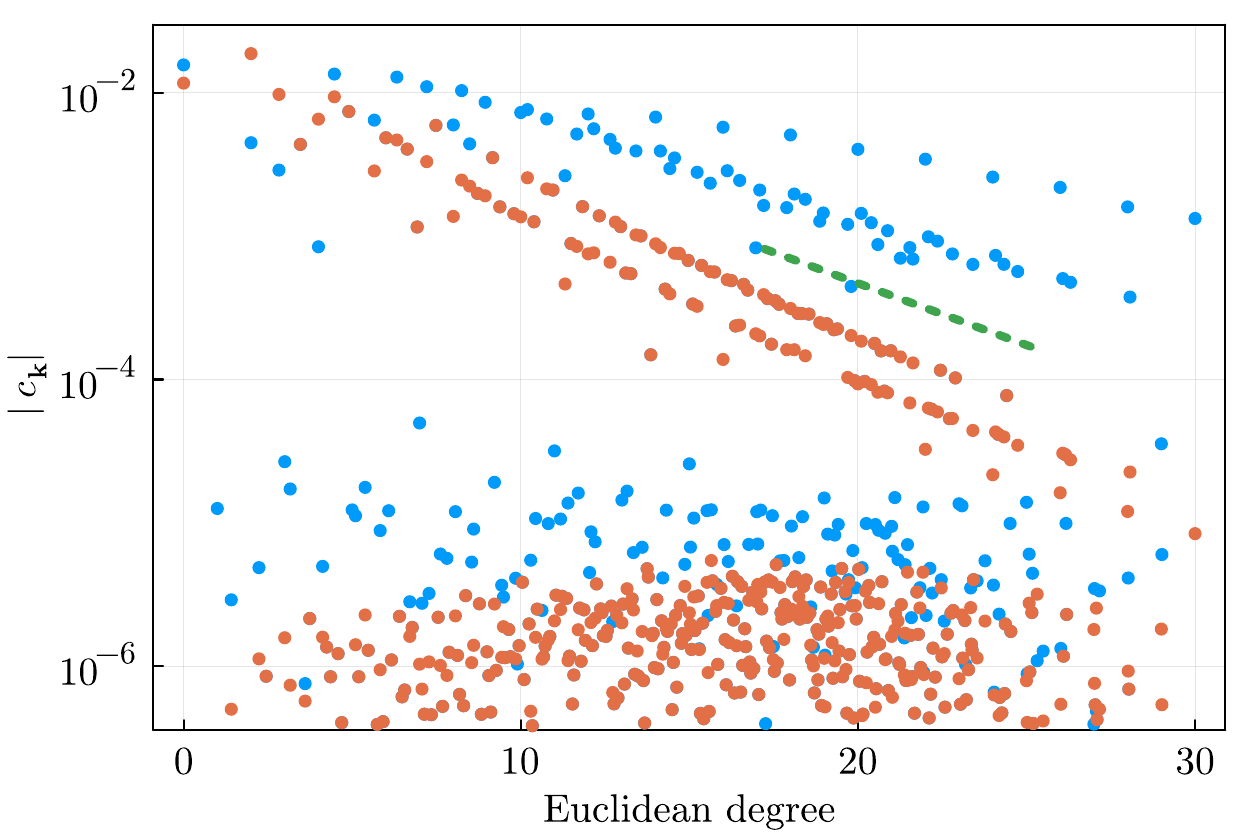}
        \caption{$N = J = 3$}
        \label{fig:decaycoeffs_J=N_2}
     \end{subfigure}
      \caption{Coefficient decay in the canonical and self-interacting bases for $f_{25}$ as defined in \eqref{eq:testfunctioncoeffs1}. All data was sampled independently from a tensor product of distributions with density $\frac{1}{\sqrt{1-x^2}}$ over $[-1,1]^N$ and regressed against an $N$--dimensional ACE basis. The coefficient decay estimate was obtained following \cite{lloydHypercube} and is the same in (a) and (b).}
      \label{fig:decaycoeffs}
\end{figure}

\subsection{Regularization, smoothness priors and the Runge effect}
\label{section:Runge}
Priors and regularization are of critical importance for applications involving linear models \cite{reg_cas, 2023-acepotentials}. In this section we explore the effects of regularization on the two discussed ACE bases and observe that smoothness priors in particular produce better results with the canonical basis. To observe the effects of regularization numerically we again use the difficult to resolve functions defined in \eqref{eq:testfunctioncoeffs1}. We use classical Tikhonov regularization throughout this section, that is we minimize systems of the form
\begin{equation}
\label{equa:RegLeastSquareSystem_lambda}
\min_{\bf c} \|\Psi \mathbf{c} - \mathbf{y}\|^2 + \lambda \|\Gamma \mathbf{c}\|^2,
\end{equation}
where $\Psi$ is the design matrix, $\mathbf{y}$ the target values, $\Gamma$ is a regularizer, $\mathbf{c}$ are the desired coefficients or model parameters and $\lambda$ is a hyperparameter over which we optimize in the range $(10^{-15},10^3)$ using a logarithmic grid search. The regularization choices we consider in this section are the identity prior $\Gamma = I$ and a diagonal regularizer of the form 
\begin{equation}
\label{eq:Gamma_prior}
    \Gamma_{\bk\bk} = \gamma(\bk).
\end{equation}
Taking $\gamma$ increasing with increasing frequency (e.g., polynomial degree) of the basis, such a regularizer encourages smoothness of the fitted target function and is therefore called a {\em smoothness prior}. The chosen growth in the smoothness prior parameter $\gamma$ enforces a corresponding asymptotic coefficient \emph{decay} in $\mathbf{c}$ as readily seen by inspecting the Tikhonov term in  \eqref{equa:RegLeastSquareSystem_lambda}, see also the discussion of coefficient decay in the previous section. A common alternative motivation for the smoothness prior relates it to the gradients $\nabla^p \phi_{k}$ \cite{reg_cas}.

In Section~\ref{section:PurificationAndOrthogonality} we discussed the relationship of these two regularization choices with the canonical and self-interacting basis respectively and showed that they can be rigorously motivated for the canonical basis only, leading to the natural conjecture that they would perform better in that context. In the experiments below we use Chebyshev and Legendre polynomials, i.e. $\phi_k(x) = T_k(x)$, where $T_k$ are Chebyshev or Legendre polynomials of degree $k$, to construct the self-interacting and canonical expansions along with smoothness priors as in \eqref{eq:Gamma_prior} with $\gamma(\bk) = \sum_{t} (1 + k_t)^2$. This choice is made in order to avoid assuming prior knowledge on the coefficient decay of the target function but in practice other more aggressive regularizers may be used. 

Figure \ref{fig:cheblegendre+uniform+cheb} shows some representative examples of the RMSE for approximations of the functions in \eqref{eq:testfunctioncoeffs1} obtained via a Tikhonov regression when $N = J = 4$, while Figure \ref{fig:f12_supnorm} shows examples for the maximum error under identical setting. Several interesting observations can be made from Figures \ref{fig:cheblegendre+uniform+cheb} and \ref{fig:f12_supnorm} which appear to hold with some generality. First, analogous to classical approximation theory results we find that Chebyshev distributed sampling leads to improved errors in both the canonical and self-interacting basis compared to uniformly distributed data. While neither Chebyshev nor uniformly distributed data are very likely to be encountered in a practical application setting, a uniform distribution more closely corresponds to the expected generic approximation theoretic behavior of real data as opposed to the very special case of Chebyshev nodes. As such, the improved behavior observed for the canonical basis in the uniformly distributed cases is noteworthy.

Figure \ref{fig:f12_supnorm} also includes the different regularization of the self-interacting basis suggested by the discussion in Section~\ref{section:PurificationAndOrthogonality}, demonstrating that instead of a change of basis one can equivalently make a special choice of regularization based on the purification transform to obtain the observed improvements. As seen in Section~\ref{section:PurificationAndOrthogonality}, this equivalence is general and not limited to the numerical experiments discussed in this section. Depending on implementation details this may in fact be the more attractive option in many scenarios as a change of regularizer is usually a trivial drop-in replacement compared to a change of basis.

We observe in Figure \ref{fig:cheblegendre+uniform+cheb-normxvsf} that the smoothness prior outperforms the identity prior and the canonical basis outperforms the self-interacting basis when smoothness priors are applied to both. Similar observations were made for a simple extension of the $J > N$ Runge function in Figure \ref{fig:normxvsf-JgtN} and \ref{fig:normxvsgradf-JgtN}.

To further examine the performance of the canonical and self-interacting expansions in terms of the RMSE in regression tasks we consider the following test function $F_{a, \epsilon}: \mathbb{R}^J \rightarrow \mathbb{R}$ defined by 
\begin{equation}
\label{eq:J>Nsumpermtesting}
                F_{a, \epsilon}(\bx) = \sum_{x \subset \bx}f_{a}(x) + \epsilon \prod^J_{i = 1}{x_i},
\end{equation}
where $f_{a}:\mathbb{R}^N \rightarrow \mathbb{R}$ is defined as in \eqref{eq:testfunctioncoeffs1} and $\sum_{x \subset \bx}$ denotes the sum over all unique ``sub-clusters'' $\{x_{j_1}, \dots, x_{j_N}\}$ of length $N$. For $\epsilon > 0$ this test function has qualitative similarities to the form of site energies in the context of interatomic potentials with a higher-body-order terms that the model cannot resolve. 

Figure \ref{fig:JgtN_toyregression} shows the learning curves (RMSE) for approximating example functions of the form \eqref{eq:J>Nsumpermtesting} for $J = 8$ and $N = 4$ with uniformly distributed data over a range of values for $a$ and $\epsilon$, using Tikhonov regression. We observe that the two described bases perform similarly in all tested cases. The reason for the marked difference to the observed behavior in Figures \ref{fig:normxvsf-JgtN} and \ref{fig:normxvsgradf-JgtN} is that, while approximating $f_{a}$ of fixed dimension $J$ using a basis with $J>N$ can be viewed as a truncation of an approximation of order $J$, no notable Runge phenomenon seems to occur for $F_{a, \epsilon}$. This suggests that the self-interacting expansion has similar performance in terms of regression task errors for multi-set functions. This observation is also consistent with the condition number tests of Section~\ref{section:ConditionNumber}.

\begin{figure}[h!]
     \centering
     \captionsetup[subfigure]{justification=centering}
     \begin{subfigure}[b]{0.49\textwidth}
         \centering
         \includegraphics[width=\textwidth]{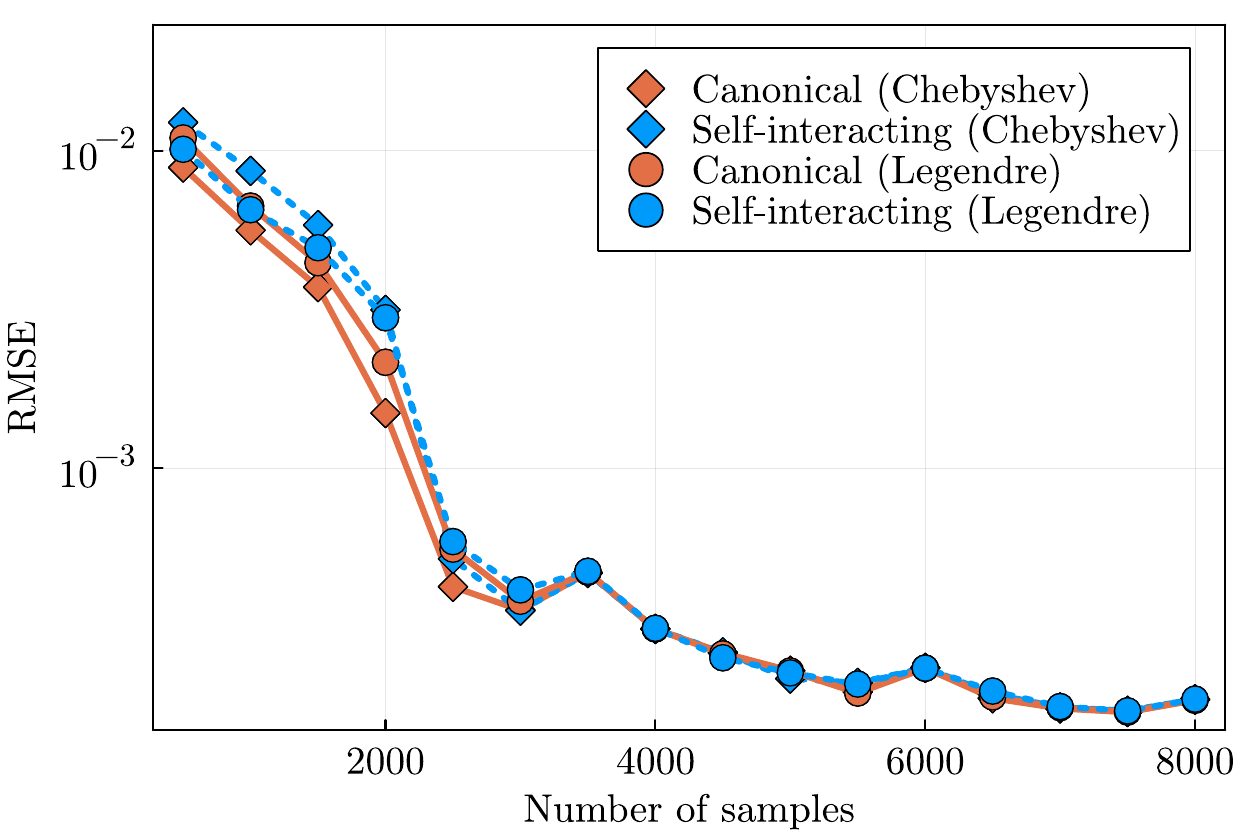}
         \caption{$f_{5}$, distribution with density  $1 / \sqrt{1 - x^2}$}
          \label{fig:f1_cheblegen+cheb_RMSE}
     \end{subfigure}
     \hfill
     \begin{subfigure}[b]{0.49\textwidth}
         \centering
         \includegraphics[width=\textwidth]{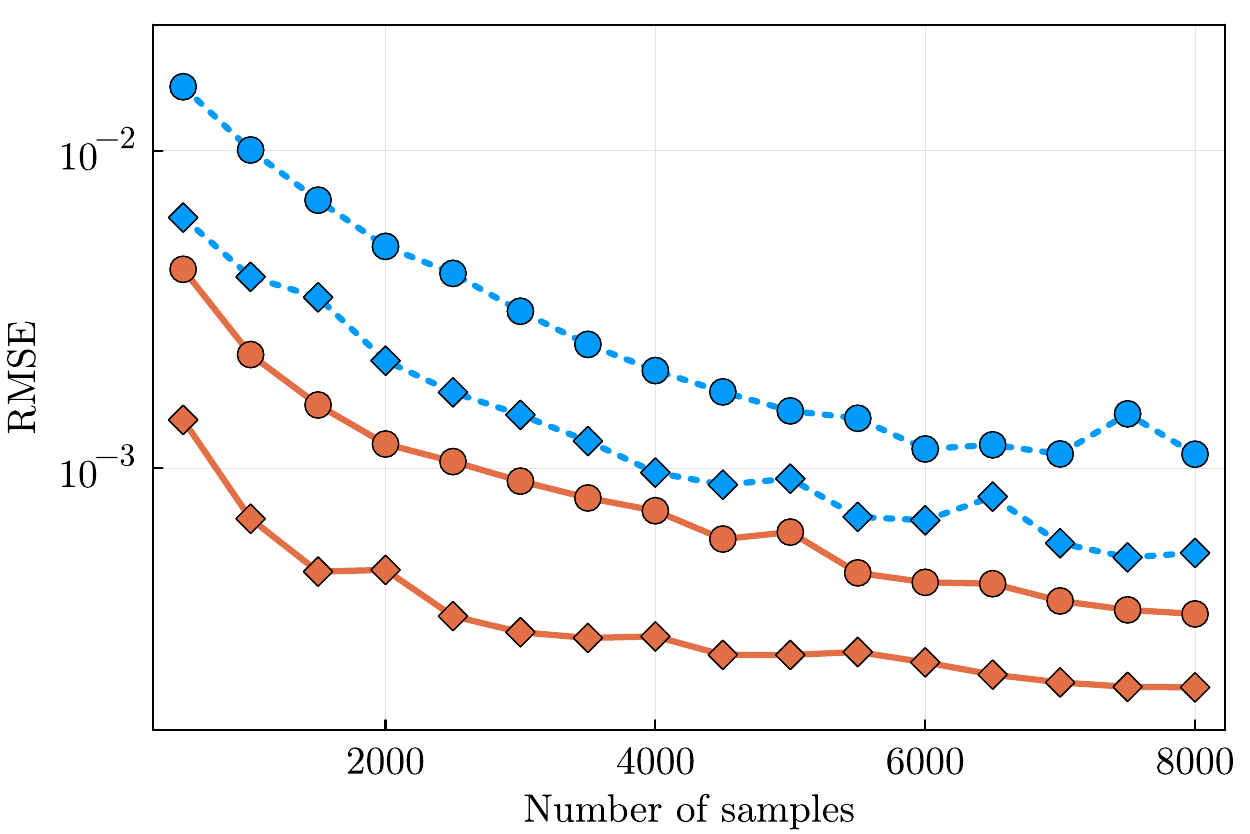}
         \caption{$f_{5}$, uniform distribution}
         \label{fig:f1_cheblegendre+uniform_RMSE}
     \end{subfigure}
     \caption{RMSE when approximating the functions $f_{5}$ in \eqref{eq:testfunctioncoeffs1} with the indicated basis functions of total degree 30 and differently distributed data.}
    \label{fig:cheblegendre+uniform+cheb}
\end{figure}

\begin{figure}[h!]
     \centering
     \captionsetup[subfigure]{justification=centering}
      \begin{subfigure}[b]{0.49\textwidth}
         \centering
         \includegraphics[width=\textwidth]{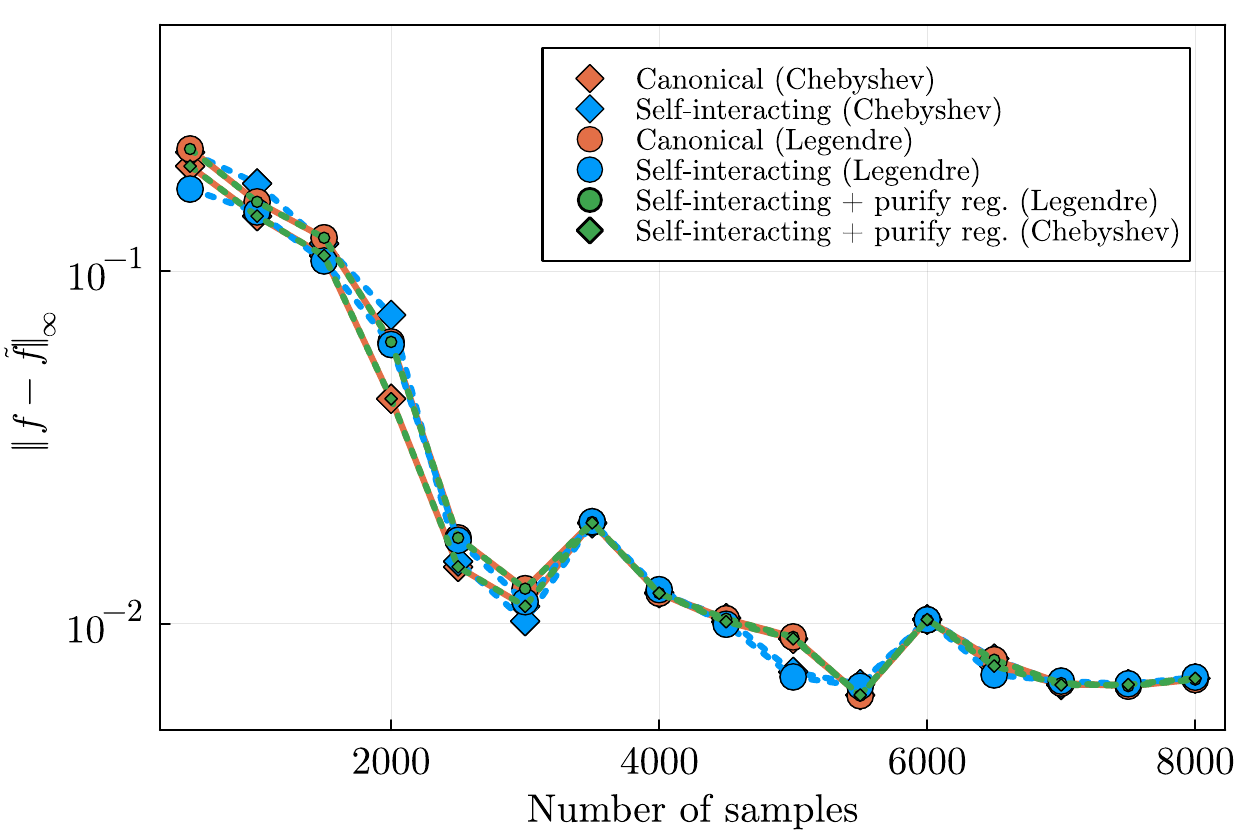}
         \caption{
         $f_{5}$, distribution with density  $1 / \sqrt{1 - x^2}$}
         \label{fig:f1_cheblegendre+cheb_supnorm}
     \end{subfigure}
     \hfill
      \begin{subfigure}[b]{0.49\textwidth}
         \centering
         \includegraphics[width=\textwidth]{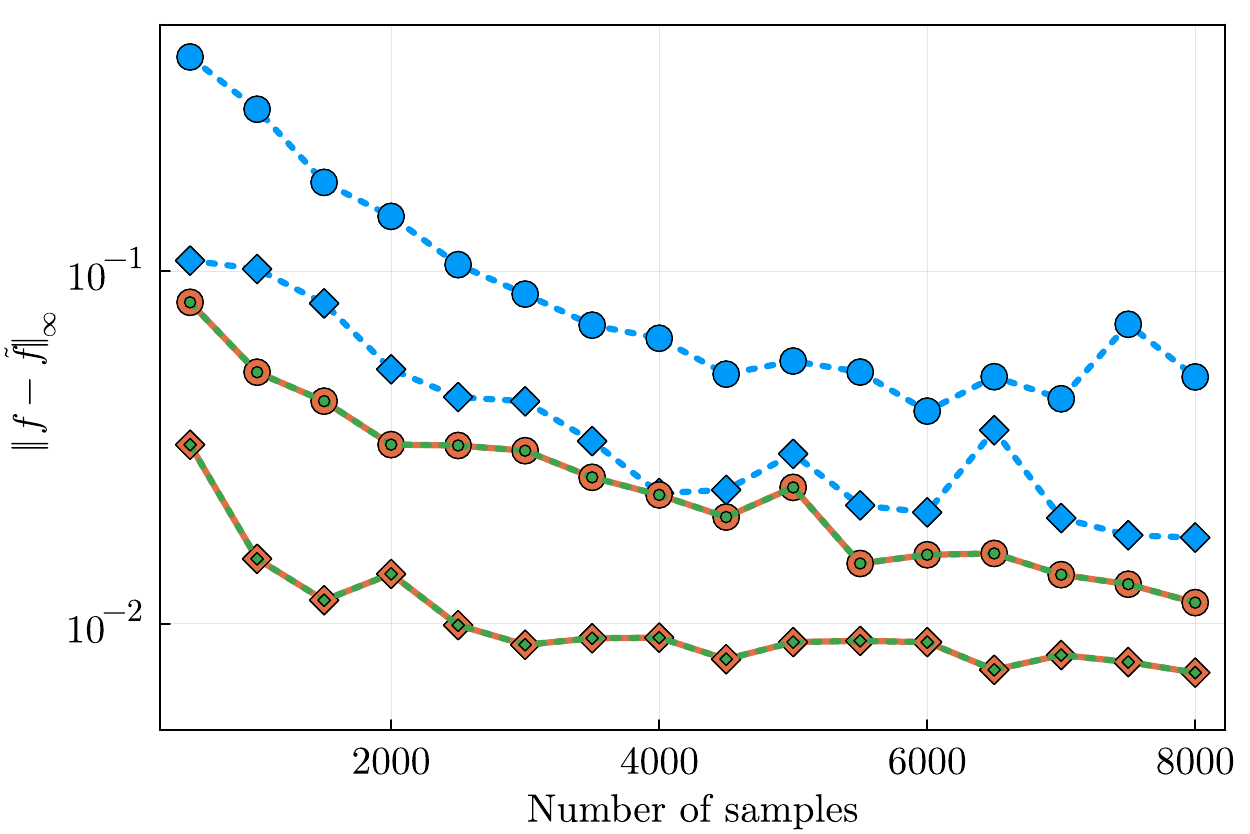}
         \caption{$f_{5}$, uniform distribution}
     \end{subfigure}
     \caption{Maximum error when approximating the functions $f_{5}$ in \eqref{eq:testfunctioncoeffs1} with the indicated basis functions of total degree 30 using the uniform distribution with identical setting as in Figure \ref{fig:cheblegendre+uniform+cheb}.}
     \label{fig:f12_supnorm}
\end{figure}

\begin{figure}[h!]
     \centering
     \captionsetup[subfigure]{justification=centering}
     \begin{subfigure}[b]{0.32\textwidth}
     \centering
         \includegraphics[width=\textwidth]{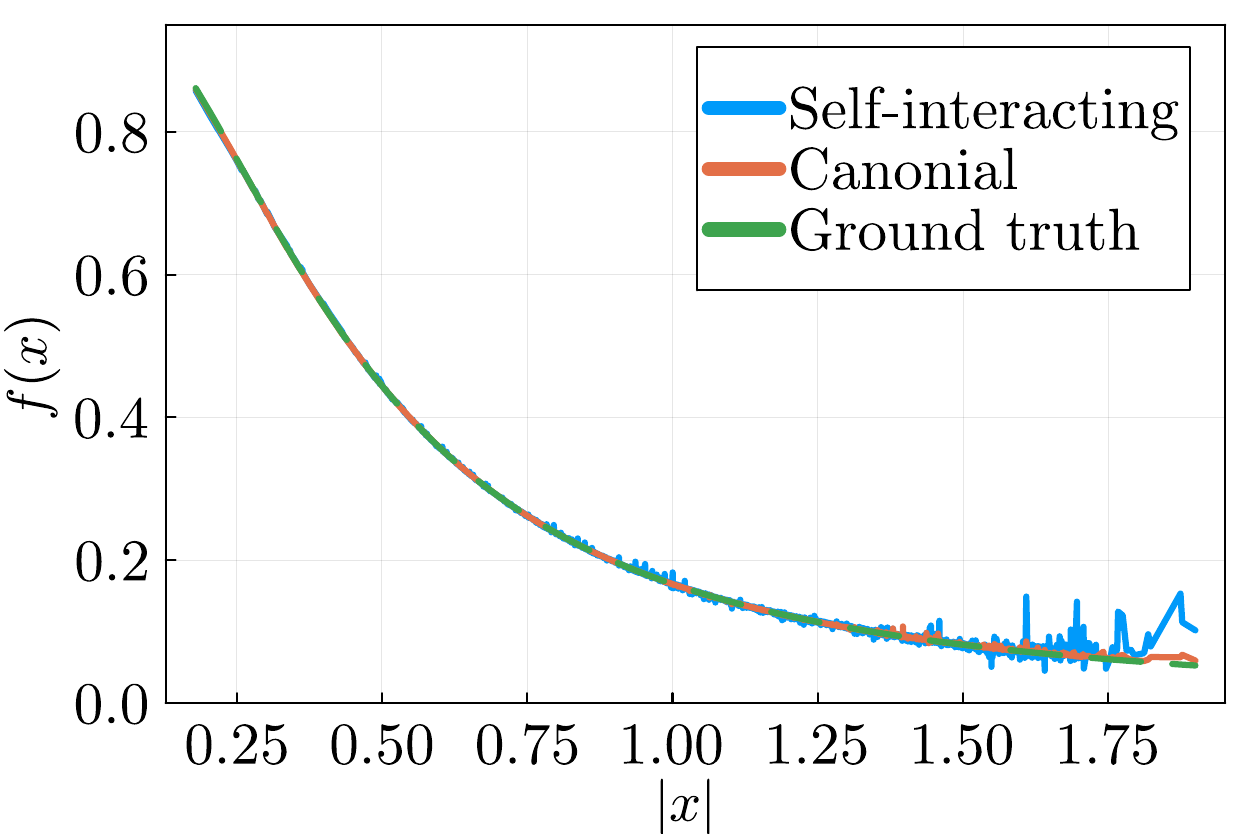}
         \caption{$f_{5}$ with $N = J = 4$\\\hspace{5mm} and smoothness prior}
         \label{fig:normxvsf-smooth-legendre}
     \end{subfigure}
     \begin{subfigure}[b]{0.32\textwidth}
     \centering
         \includegraphics[width=\textwidth]{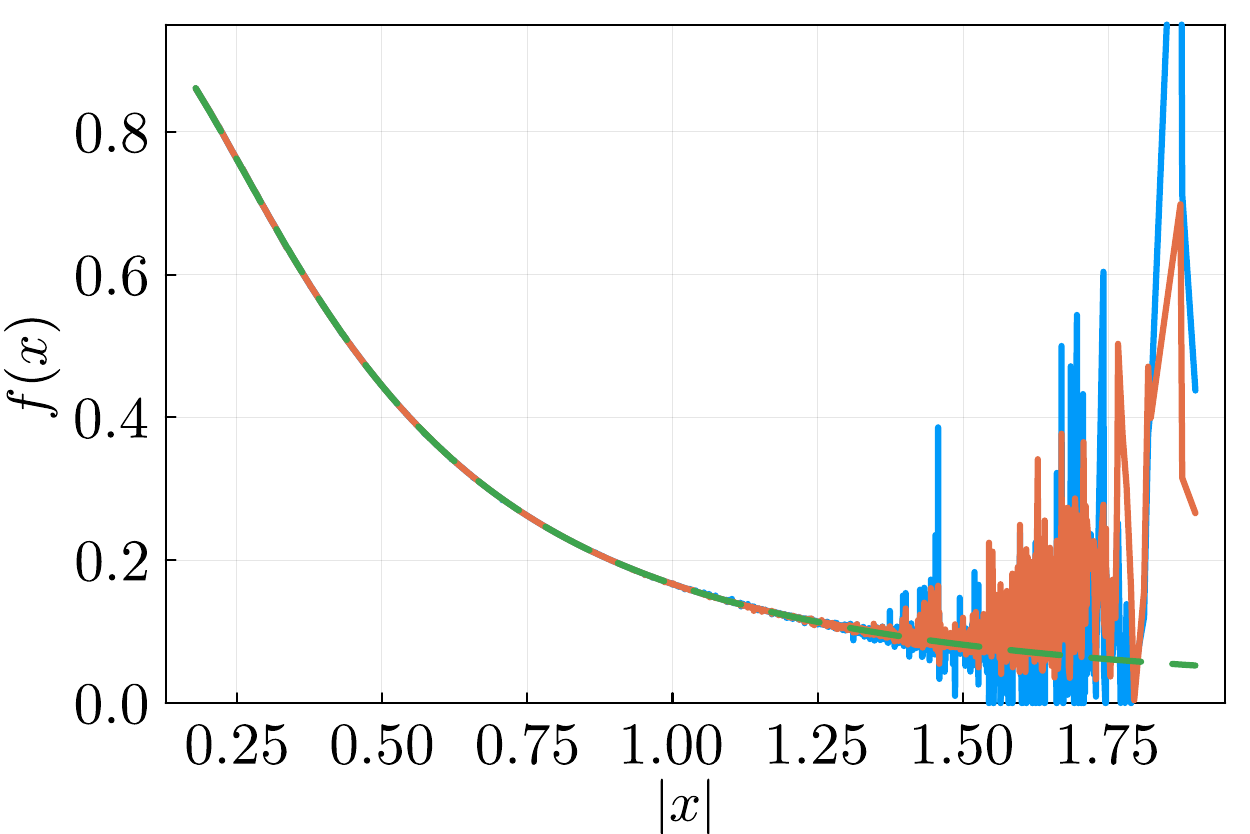}
         \caption{$f_{5}$ with $N = J = 4$\\ and identity prior}
         \label{fig:normxvsf-identity-legendre}
     \end{subfigure}
      \begin{subfigure}[b]{0.32\textwidth}
      \centering
         \includegraphics[width=\textwidth]{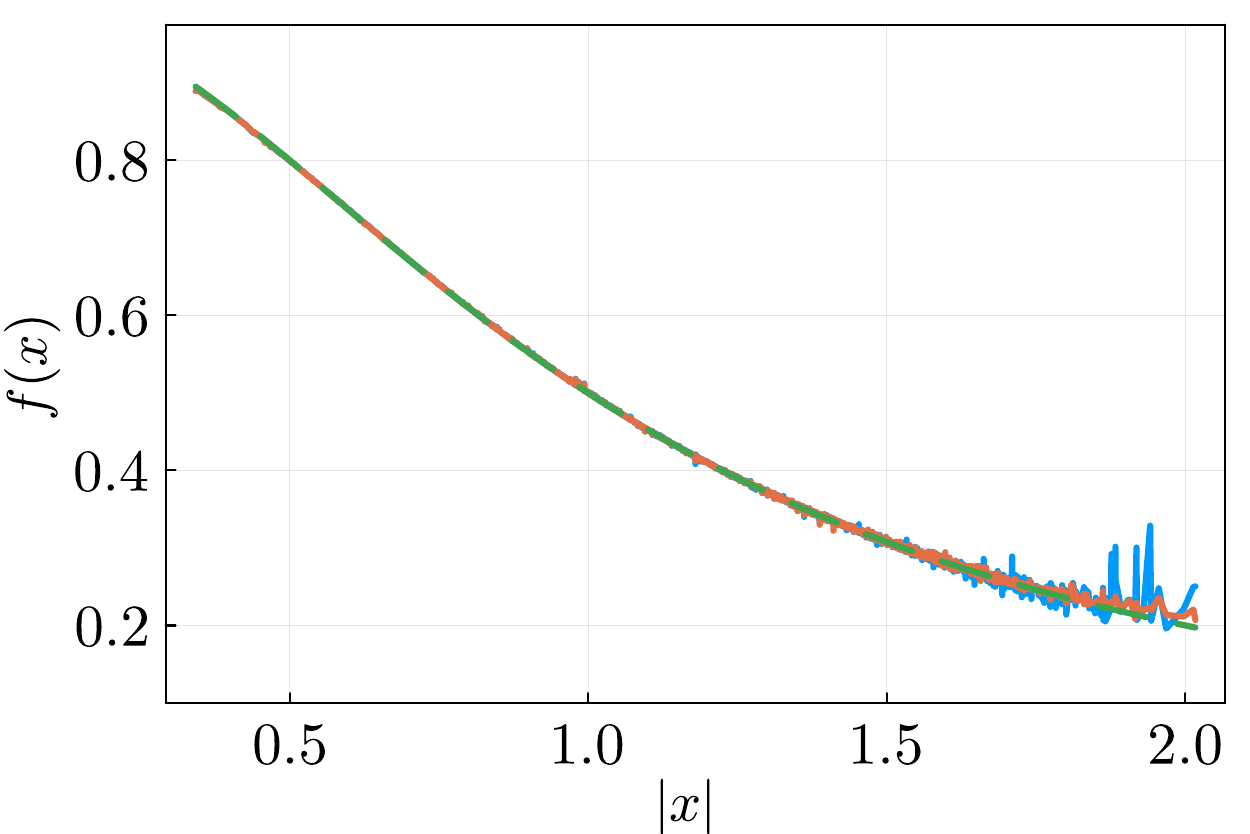}
         \caption{$f_{1}$ with $N = 4$, $J = 5$\\ \hspace{5mm} and smoothness prior}
         \label{fig:normxvsf-JgtN}
     \end{subfigure}
     \begin{subfigure}[b]{0.32\textwidth}
     \centering
         \includegraphics[width=\textwidth]{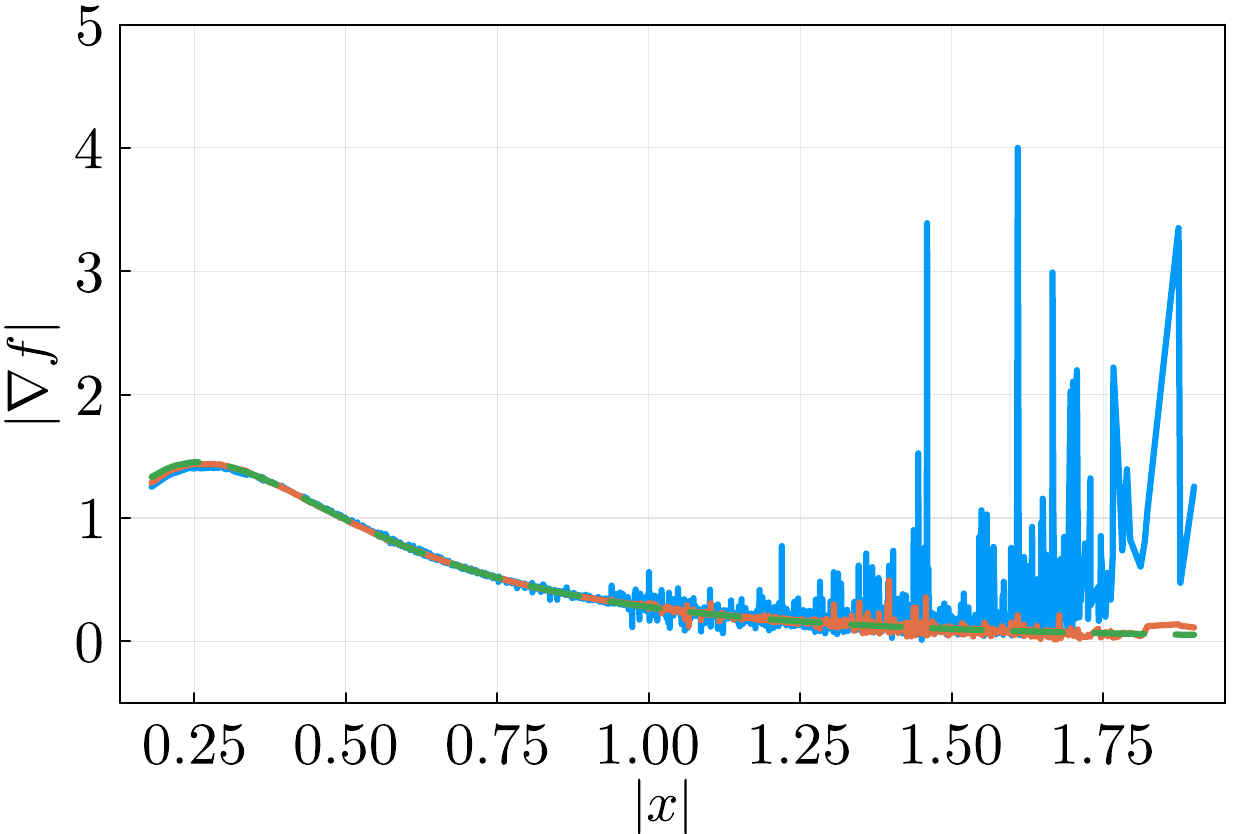}
         \caption{$f_{5}$ with $N = J = 4$\\\hspace{5mm} and smoothness prior}
         \label{fig:normxvsgradf-smooth-legendre}
     \end{subfigure}
     \begin{subfigure}[b]{0.32\textwidth}
     \centering
         \includegraphics[width=\textwidth]{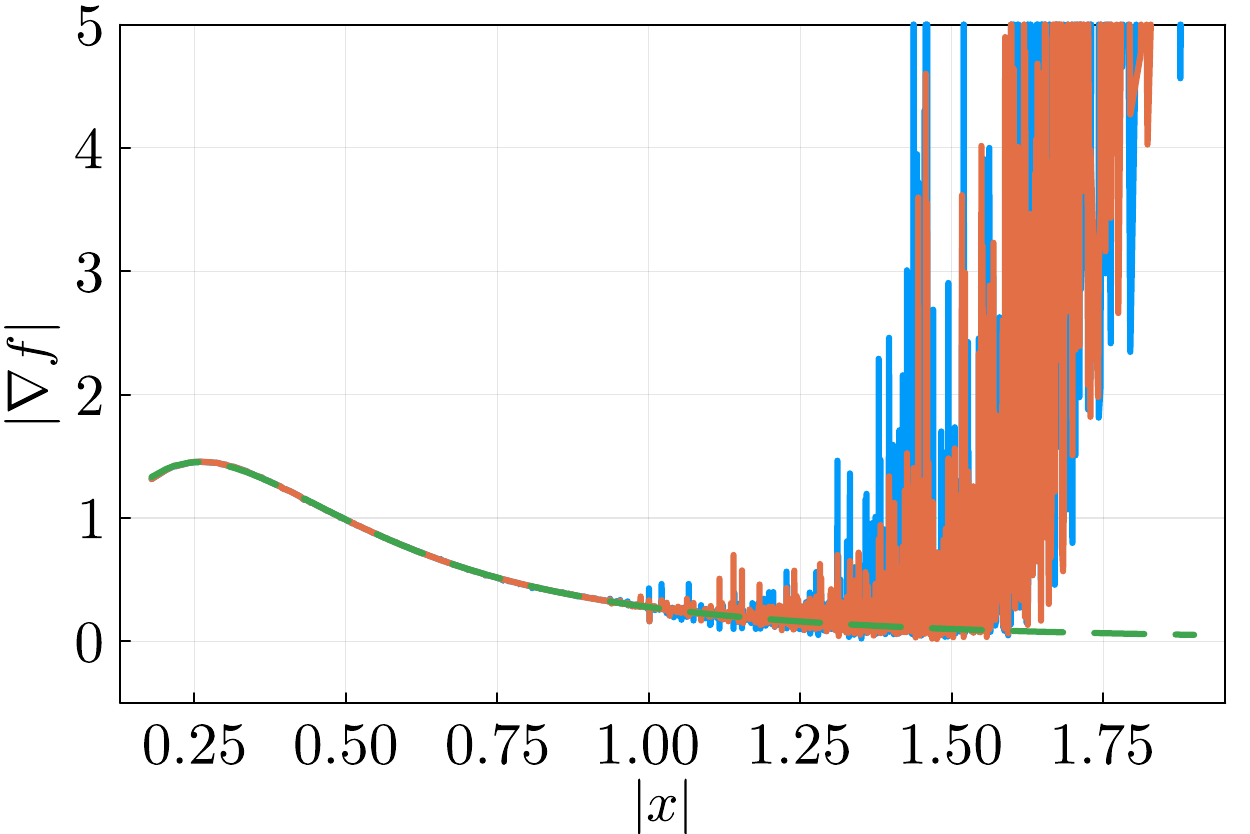}
         \caption{$f_{5}$ with $N = J = 4$\\ and identity prior}
         \label{fig:normxvsgradf-identity-legendre}
     \end{subfigure}
      \begin{subfigure}[b]{0.32\textwidth}
      \centering
         \includegraphics[width=\textwidth]{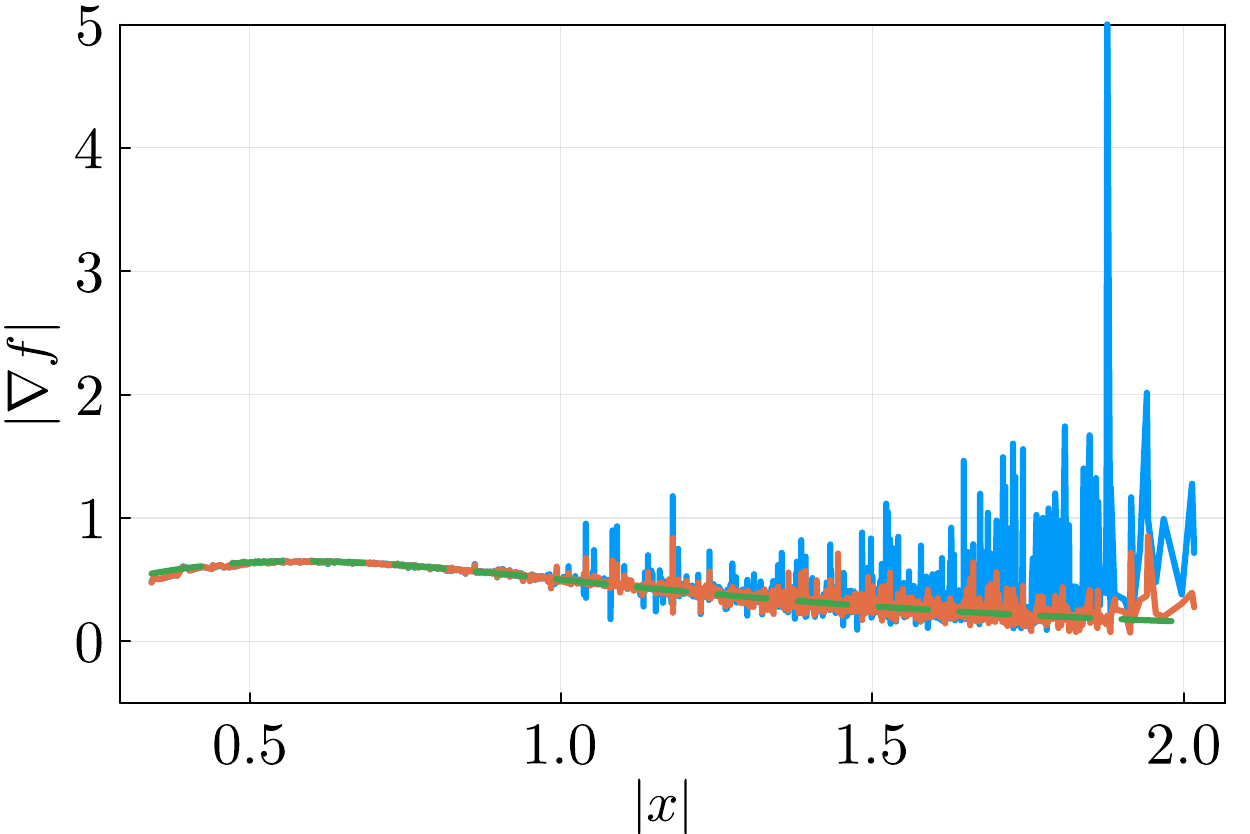}
         \caption{$f_{1}$ with $N = 4$, $J = 5$\\ \hspace{5mm} and smoothness prior}
         \label{fig:normxvsgradf-JgtN}
     \end{subfigure}
     \caption{Radial plots comparing approximations in the canonical and self-interacting bases with the approximated function and its gradient in different settings. Comparison shows the superior effect of the smoothness prior for the canonical basis as well as better performance of the smoothness prior compared to a simpler $\Gamma = I$ identity prior. We use $f_{1}$ in (c) and (f) instead of $f_{5}$ to reduce the difficulty of the regression and obtain a meaningfully accurate fit.} 
    \label{fig:cheblegendre+uniform+cheb-normxvsf}
\end{figure}

\begin{figure}[h!]
     \centering
     \captionsetup[subfigure]{justification=centering}
     \begin{subfigure}[b]{0.32\textwidth}
         \centering
         \includegraphics[width=\textwidth]{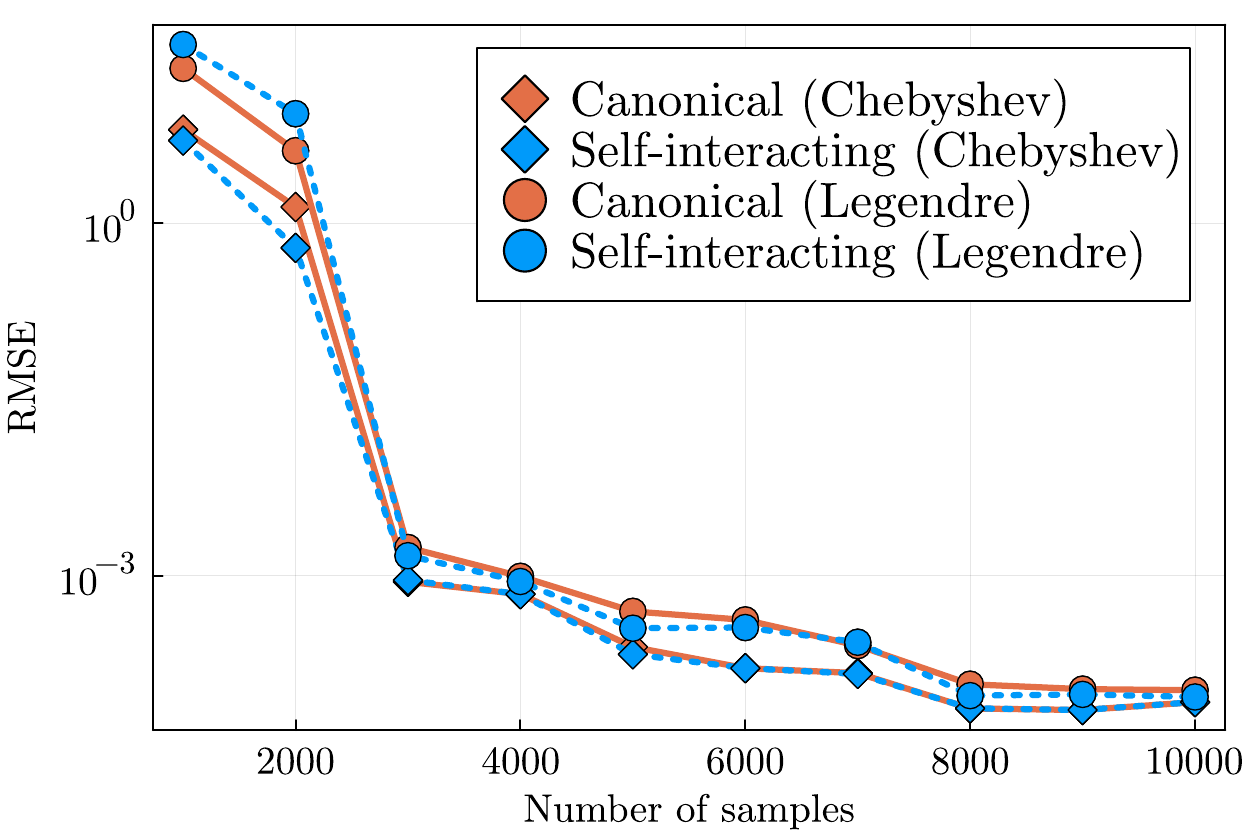}
         \caption{$F_{1, 0}$}
     \end{subfigure}
     \hfill
     \begin{subfigure}[b]{0.32\textwidth}
         \centering
         \includegraphics[width=\textwidth]{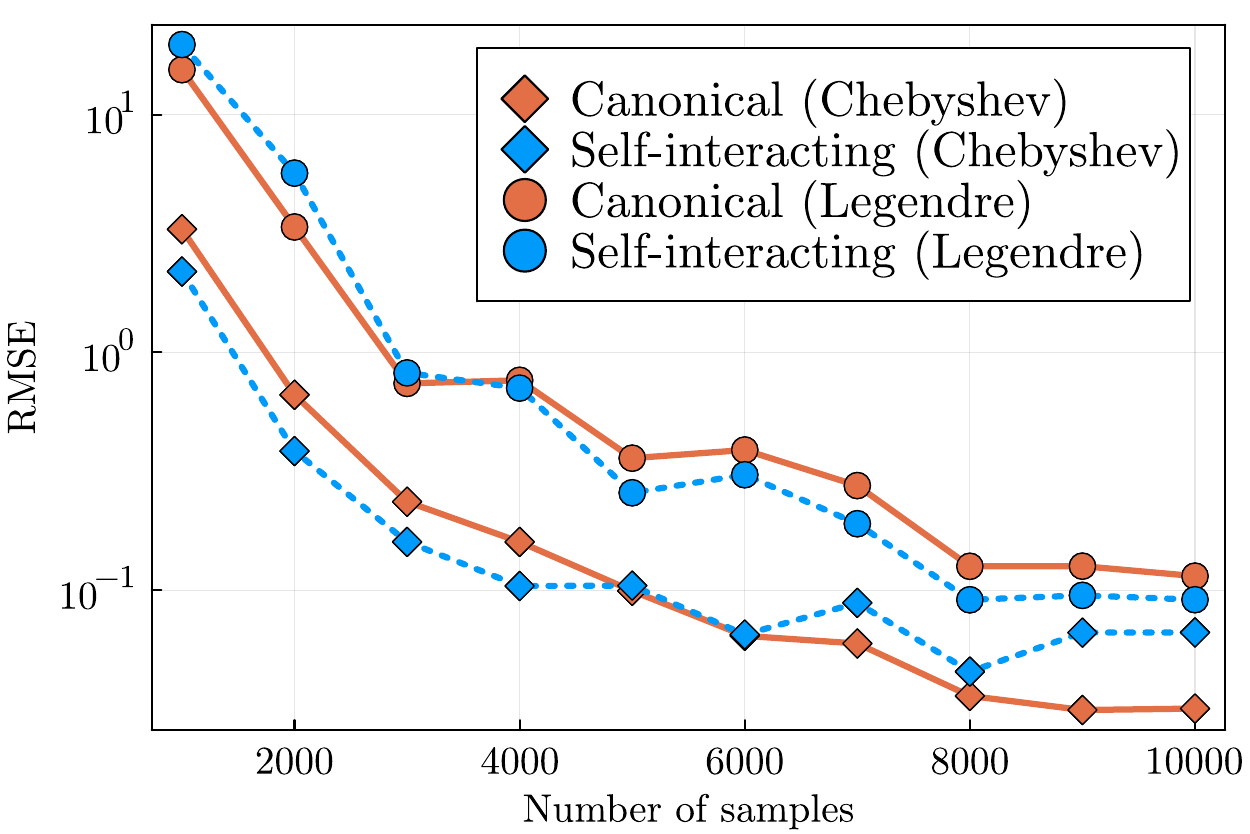}
         \caption{$F_{5, 0}$}
     \end{subfigure}
     \hfill
      \begin{subfigure}[b]{0.32\textwidth}
         \centering
         \includegraphics[width=\textwidth]{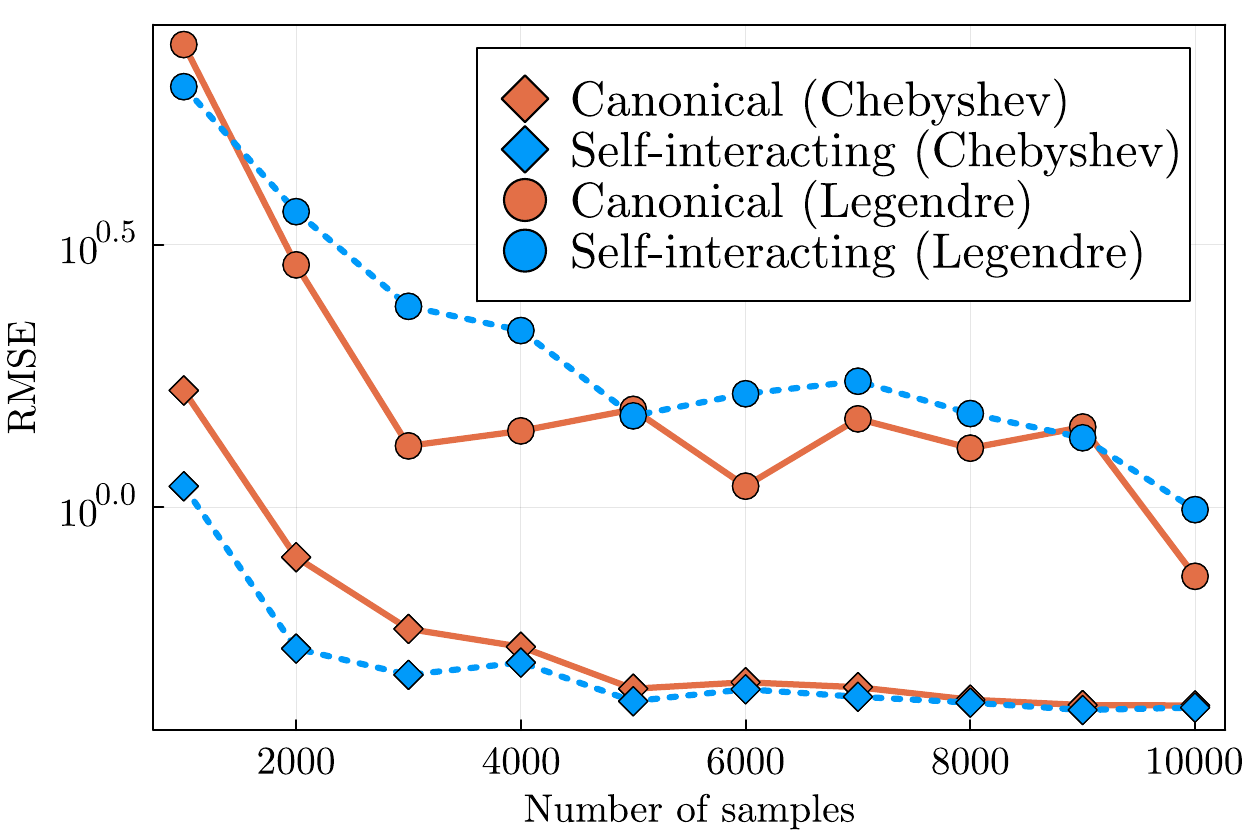}
         \caption{$F_{25, 0}$}
     \end{subfigure}
     \hfill
      \begin{subfigure}[b]{0.32\textwidth}
         \centering
         \includegraphics[width=\textwidth]{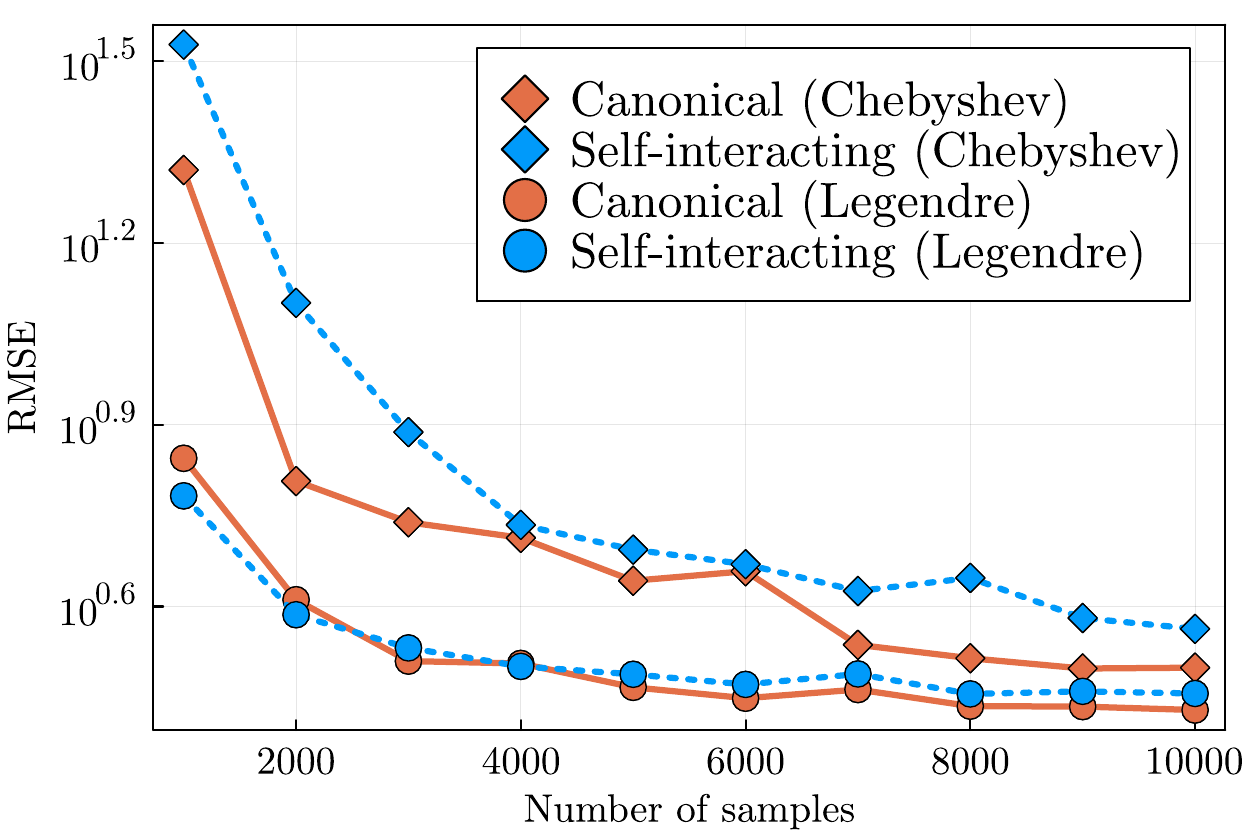}
         \caption{$F_{1, 0.1}$}
     \end{subfigure}
     \hfill
      \begin{subfigure}[b]{0.32\textwidth}
         \centering
         \includegraphics[width=\textwidth]{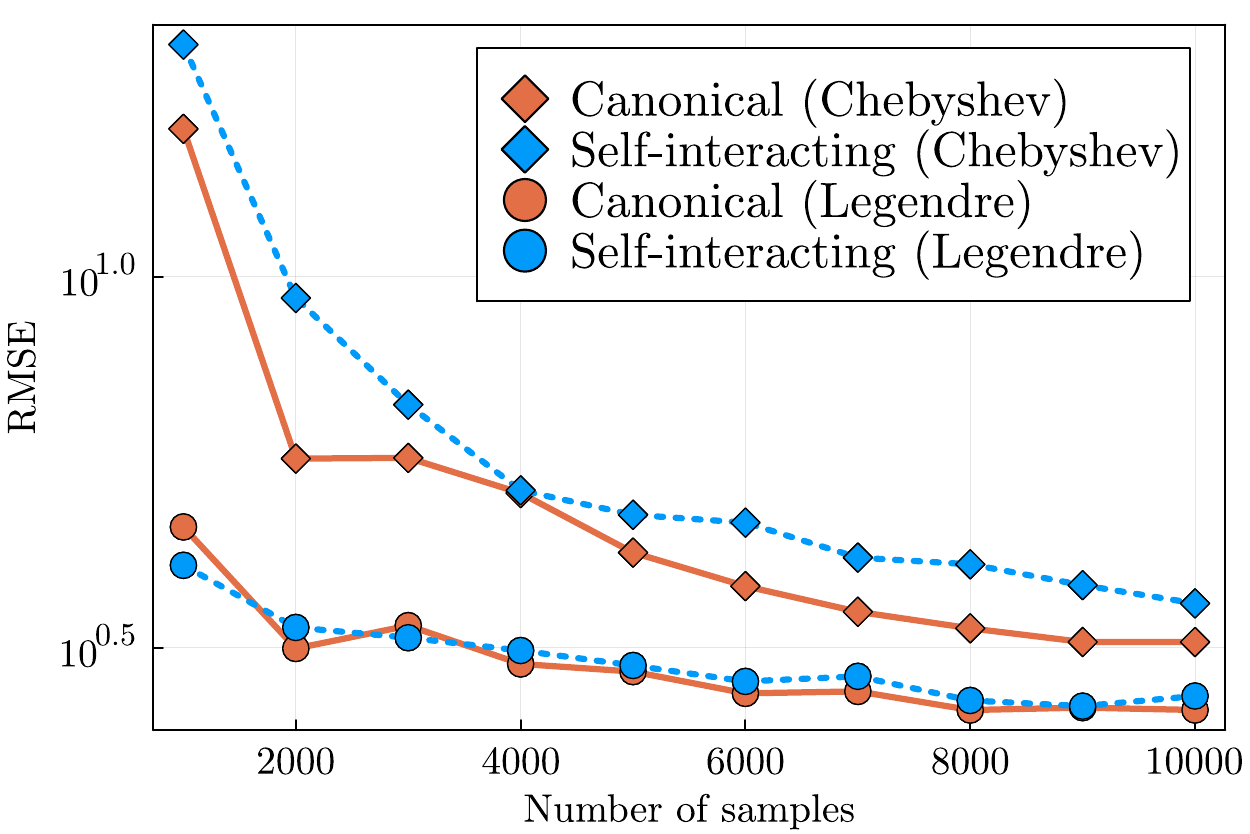}
         \caption{$F_{5, 0.1}$}
     \end{subfigure}
     \hfill
      \begin{subfigure}[b]{0.32\textwidth}
         \centering
         \includegraphics[width=\textwidth]{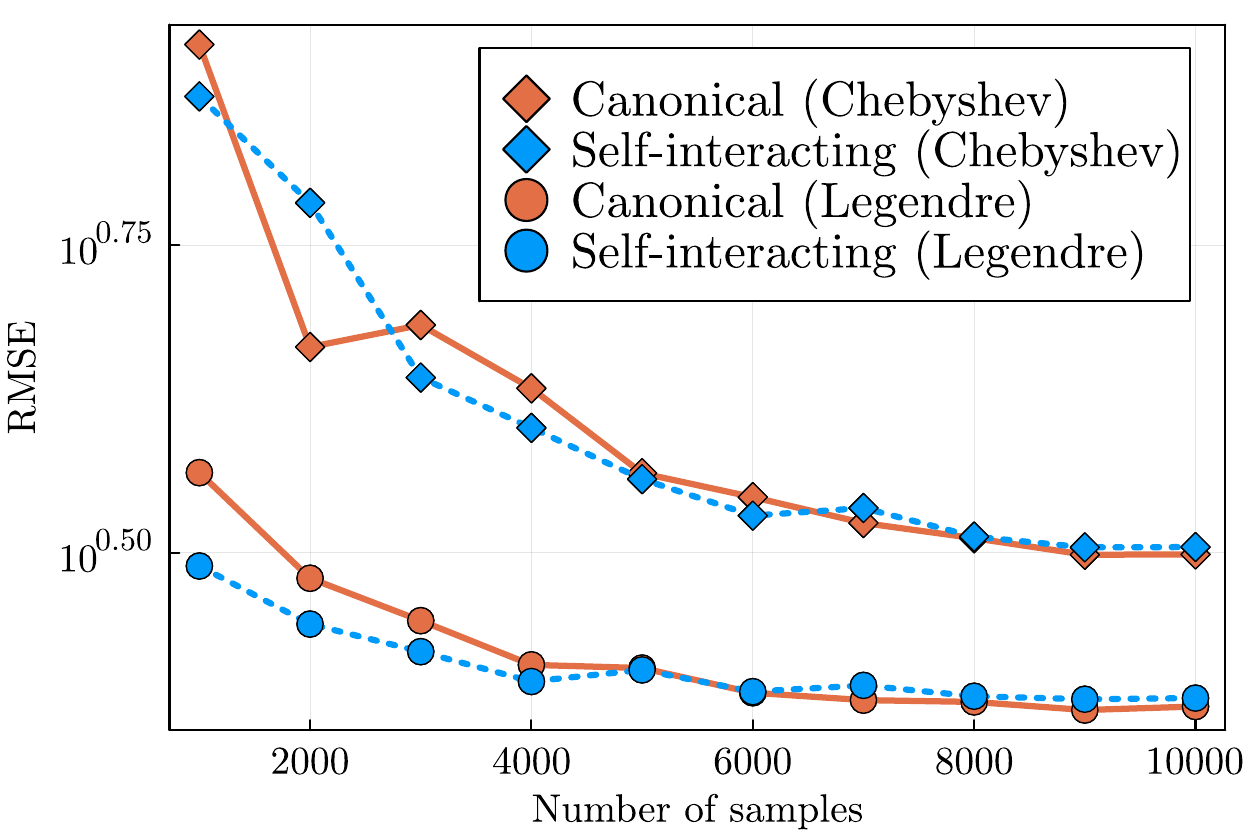}
         \caption{$F_{25, 0.1}$}
     \end{subfigure}
     \caption{RMSE when approximating the function $F_{a, \epsilon}$ in $\eqref{eq:J>Nsumpermtesting}$ with indicated $a$ and $\varepsilon$ using the uniform distribution with the indicated basis functions of total degree 30.}
    \label{fig:JgtN_toyregression}
\end{figure}

\clearpage 

\section{Application to machine learning interatomic potentials}
\label{section:MLIPs}
The original and still primary application of the (self-interacting) atomic cluster expansion has been the parameterization of machine learning interatomic potentials (MLIPs) \cite{2023-acepotentials,DrautzACE,vandermause2020flyal,ACECompleteness}.
In this section we demonstrate the advantages of replacing the standard self-interacting ACE formulation with the canonical cluster expansion in this application.  

\subsection{Review of ACE for MLIPs}
The task is the parameterization of the potential energy $E(\msl \br_i, Z_i \msr_{i = 1}^{N_{\rm at}})$ of an atomic structure $\msl \br_i, Z_i \msr_{i = 1}^{N_{\rm at}} \subset \mathbb{R}^{3} \times \mathbb{Z}$, where $\br_i$ and $Z_i$ are, respectively, the position and atomic number of atom $i$. For the sake of simplicity, we will only consider elemental systems, hence we ignore the atomic number; an atomic structure then becomes simply a point cloud $\msl \br_i \msr_{i = 1}^{N_{\rm at}} \subset \mathbb{R}^3$. Furthermore, we will ignore periodic boundary conditions which are commonly employed in this setting. (Although they do appear in the datasets we use below, they are unimportant to explain the model parameterization via ACE.)

Within the ACE framework the potential energy surface is not parameterized directly but it is first decomposed into site energies, 
\[
    E\big(\{\br_i\}_{i =1}^{N_{\rm at}} \big) 
        = 
    \sum_{i = 1}^{N_{\rm at}} \varepsilon\big( \mslb \br_{ij} \msrb_{j \sim i} \big) 
\]
where $\varepsilon$ is called the {\it site energy potential}, $\br_{ij} = \br_j - \br_i$ and $j \sim i$ if $|\br_{ij}| < r_{\rm cut}$. The cutoff radius $r_{\rm cut} > 0$ is a model hyperparameter which we take as fixed. The spatial decomposition into site energies can be thought of as a physical constraint, or prior~\cite{chen2016qm, 2019-tbloc0T}. The reason to parameterize $\varepsilon$ instead of $E$ directly is size-extensivity of the model. We can guarantee  $E(3)$ invariance (Euclidean group) of $E$ by enforcing $O(3)$ invariance of $\varepsilon$.

Since the distance between atoms changes during a simulation, the atomic environment $\msl\br_{ij}\msr_{j \sim i}$ of an atom $i$ is a multi-set. Setting $\Omega \subset \R^3$ to be a ball with radius $r_{\rm cut}$ and $x_j := \br_{ij}$ brings us into the general abstract context of the foregoing sections to parameterize $\varepsilon$ via the canonical or self-interacting $O(3)$-invariant cluster expansion. To specify the parameterization of $\varepsilon$ we will closely (but not exactly) follow the construction in Section~\ref{section:G=O(3)}. 

There is a small change in the construction of the one-particle basis due to the presence of the cutoff radius $r_{\rm env}$. 
Following  \cite{ACECompleteness, 2023-acepotentials}, to guarantee smoothness of $\varepsilon$ as atoms enter and exit the atomic neighbourhood, we introduce an envelope function $f_{\rm cut}$. The radial basis $R_n$ in \eqref{equa:O(3)1pbasis} is thus replaced by 
\begin{equation}
\label{equa:MLIP_1pbasis}
    R_n(r_{ij}) = f_{\text{env}}(y(r_{ij}))P_n(y(r_{ij})),
\end{equation}
where $y$ is a coordinate transform, $P_n$ is an orthogonal basis in $y$-coordinate with weight $f_{\text{env}}(y(r_{ij}))$ and $f_{\text{env}}(y(r_{ij}))$ is a polynomial that vanishes smoothly outside $[0, r_{\text{cut}}]$. By choosing $f_{\rm env}$ to be a polynomial (normally at most quartic) allows exact linearization of $\phi_{n_1 l_1 m_1}\phi_{l_2 m_2 l_2}$, which is required for the construction of the purification operator. Further details of the radial basis construction are given in Appendix \ref{Appendix:ModelConstruction} and in \cite{2023-acepotentials}, which also gives a more in-depth explanation for the choice of radial basis.

After constructing the one particle basis, the general ACE framework outlined in the previous sections can be used to construct the $O(3)$-invariant basis ${\bf B}(\msl \br_{ij} \msr_{j \sim i})$ and the resulting parameterization of the site energy potential, 
\begin{align*}
    \varepsilon\big({\bf c}; \msl \br_{ij} \msr_{j \sim i}\big)
    = {\bf c} \cdot {\bf B}(\msl \br_{ij} \msr_{j \sim i}), \qquad \text{or,} \qquad  
    \varepsilon\big({\bf c}; \msl \br_{ij} \msr_{j \sim i}\big)
    = {\bf c} \cdot {\cal B}(\msl \br_{ij} \msr_{j \sim i}). 
\end{align*}
A key point to note here is that, due to the addition of an envelope function, the expansion on the radial part is not total degree preserving (cf. Proposition \ref{proposition:preserving}), and hence two models (canonical and self-interacting) may be no longer equivalent. We refer to Appendix \ref{Appendix:ExtraBasis} for a discussion of the effect of the envelope function on the spanning space of the two expansions.

From $\varepsilon$ we can evaluate total energies $E$, and hence also forces and potentially other observations. Training datasets are {\em not} provided in the form of atomic environment and site energy pairs, but in the form of a list of atomic structures, and corresponding target total energies and forces obtained from ab-initio level calculations (Kohn-Sham density function theory in all our examples). 
The parameters ${\bf c}$ are estimated by minimizing a mean square cost function matching all available observations. Since energies and forces are linear functions of the parameters ${\bf c}$, this results in a linear regression problem. There are many different methods to solve such a linear least squares problem. 
We will not use the very general Tikhonov formulation in \eqref{eq:genericRegLSQ} and instead rescale the design matrix $\Psi$ by $\Gamma^{-1}$ to enforce an asymptotic coefficient decay to obtain:
\begin{align*}
\min_{\bf b} \left \| \Psi \Gamma^{-1}\mathbf{b} - \mathbf{y} \right \|^2,
\end{align*}
which we solve for given relative tolerances via a truncated SVD approach, after which we revert the scaling via $\mathbf{c} = \Gamma^{-1} \mathbf{b}$. 
All the experiments are performed using the {\tt ACEpotentials.jl} code~\cite{2023-acepotentials}. We refer again to \cite{2023-acepotentials, ACECompleteness} for further details and further discussion of the parameter estimation. 

The canonical parameterization ${\bf c} \cdot {\cal B}(\msl \br_{ij} \msr_{j \sim i})$ has the {\em potential advantage} over the self-interacting parameterization that the energy contribution from every order is separated as clearly seen in \eqref{equa:CanonicalExpansion}. However, since the two expansions span the same or nearly the same space, it is unclear whether this chemically intuitive property translates into actual benefits in accuracy or qualitative properties of the fitted site energy potential $\varepsilon$. Our experiments in the following section are a preliminary investigation of this question. 
We perform this analysis on two data sets: the ``Zou {\it et al} (2020) data set'' taken from an established MLIPs benchmark \cite{Zuo2020} and a more recent and more challenging ``iron data set'' published in \cite{zhang2020Fe}.

\subsection{Zuo et al (2020) data set}
\label{section:Zuo}
The Zou {\it et al} (2020) data set \cite{Zuo2020} contains data sets for six elements, Li, Mo, Ni, Cu, Si and Ge, spanning various crystal structures (bcc, fcc, and diamond) and bonding types (metallic and covalent). It is a relatively large dataset but with limited diversities in terms of its coverage of the configuration space for each of these materials. Test and training data splits are provided explicitly and are not generated randomly. In previous tests~\cite{2023-acepotentials}, we found the Mo dataset to be the most challenging, hence we use this for our tests, but similar observations were made on the other materials. A self-interacting and canonical model using approximately 1200 basis functions is used (See \ref{Appendix:hyperparameters_zuo} for details) to construct the design matrix. 

In our first test, we explore the data efficiency of the two parameterizations. To that end, we plot the learning curves for both total energy and forces in Figure~\ref{fig:Mo_svd_rmse}, for different regularization parameters. The canonical model achieves only slightly better force errors and similar energy errors as the self-interacting model. However, the canonical model is more robust in this test in two important ways: first, it is capable of achieving the best accuracy simultaneously for energy and forces; and secondly, its convergence is more stable under changes in the regularization parameter.

\begin{figure}[h!]
\captionsetup[subfigure]{justification=centering}
     \begin{subfigure}[b]{0.49\textwidth}
         \centering
        \includegraphics[width=\textwidth]{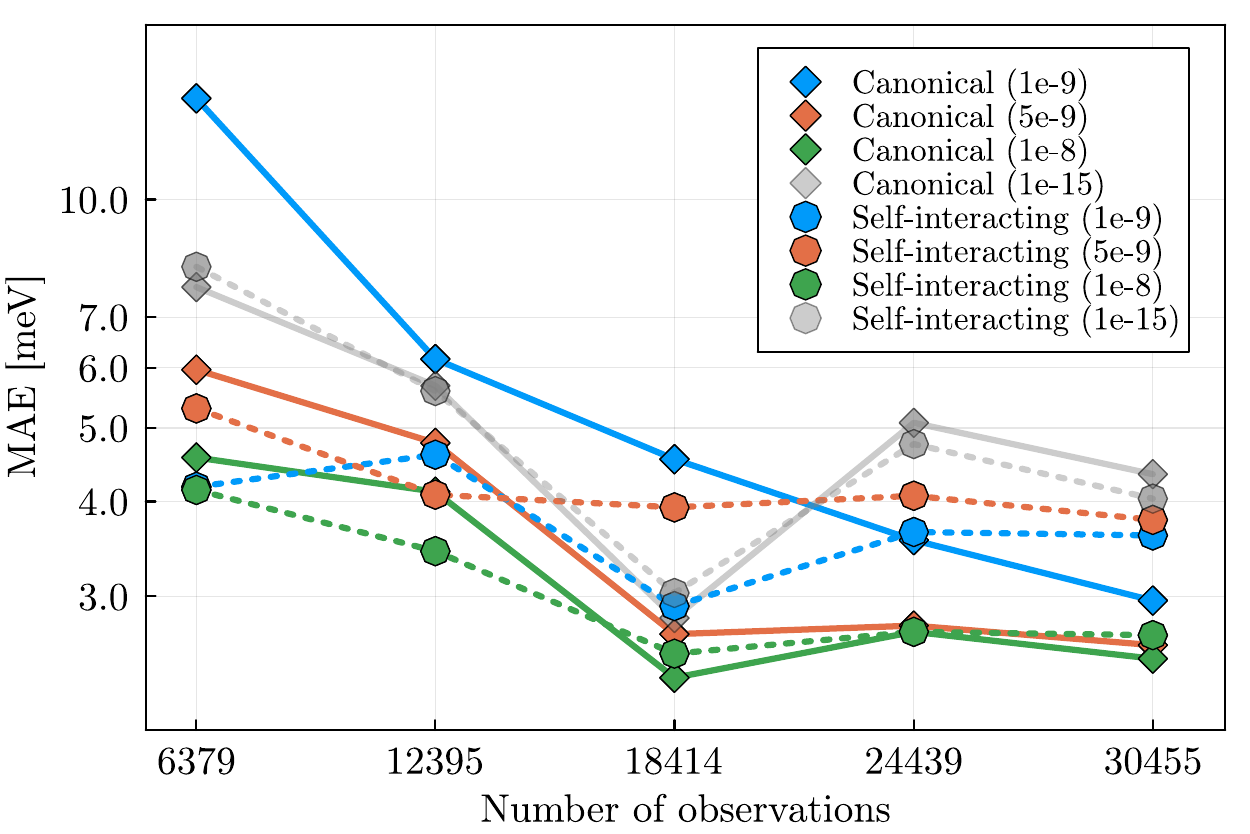}
        \caption{Predicted energy error}
     \end{subfigure}
     \hfill
     \begin{subfigure}[b]{0.49\textwidth}
        \centering
        \includegraphics[width=\textwidth]{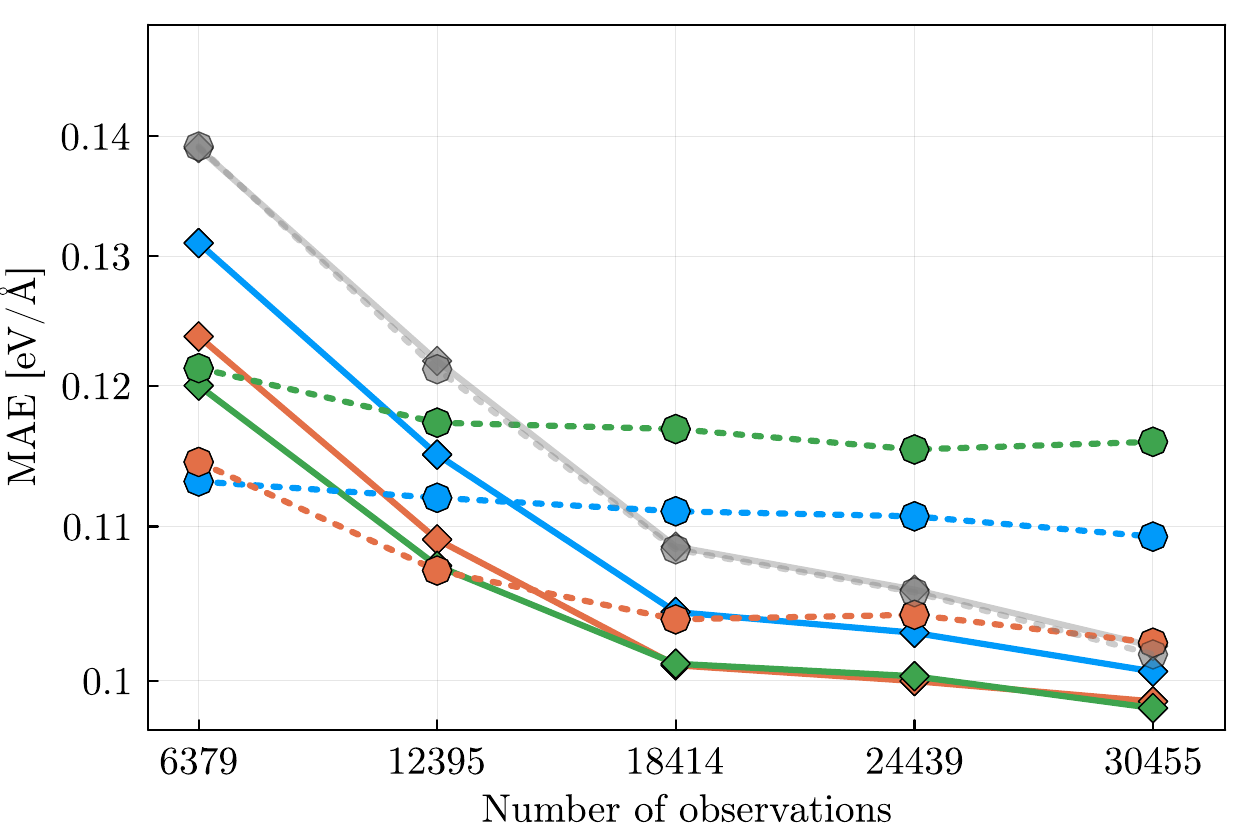}
        \caption{Predicted force error}
     \end{subfigure}
     \caption{Lerning curves for canonical and self-interacting ACE models for the Mo data set of Zuo et al~\cite{Zuo2020}. We use MAE instead of RMSE for consistency with \cite{Zuo2020}.}
     \label{fig:Mo_svd_rmse}
\end{figure}

The second part of our exploration concerns qualitative properties. A common test~\cite{PACE,2023-acepotentials} is to visually inspect the regularity of the dimer curve, since it is a one-dimensional but chemically important quantity. Empirically, the regularity and qualitatively convex-concave shape of the dimer curve has been found to be directly related to the stability of molecular dynamics simulations. 
In Figure~\ref{fig:Mo_svd_dimer} we plot the dimer curves (and their derivatives) for the canonical and self-interacting ACE models, both in a low-data and large-data regime, and for different choices of regularization parameters. With minimal regularization, even though both models have acceptable energy and force errors on the test set, their dimer shapes are highly oscillatory which is unacceptable in simulations~\cite{PACE,2023-acepotentials}.
With sufficient regularization, both models provide qualitatively acceptable dimer curves. However the canonical model shows markedly smoother results (most clearly visible in subfigures c, d) and in the high-data regime the dimer energy minimum is an accurate representation of the ground state bondlength. 

\begin{figure}[h!]
\captionsetup[subfigure]{justification=centering}
     \begin{subfigure}[b]{0.49\textwidth}
         \centering
        \includegraphics[width=\textwidth]{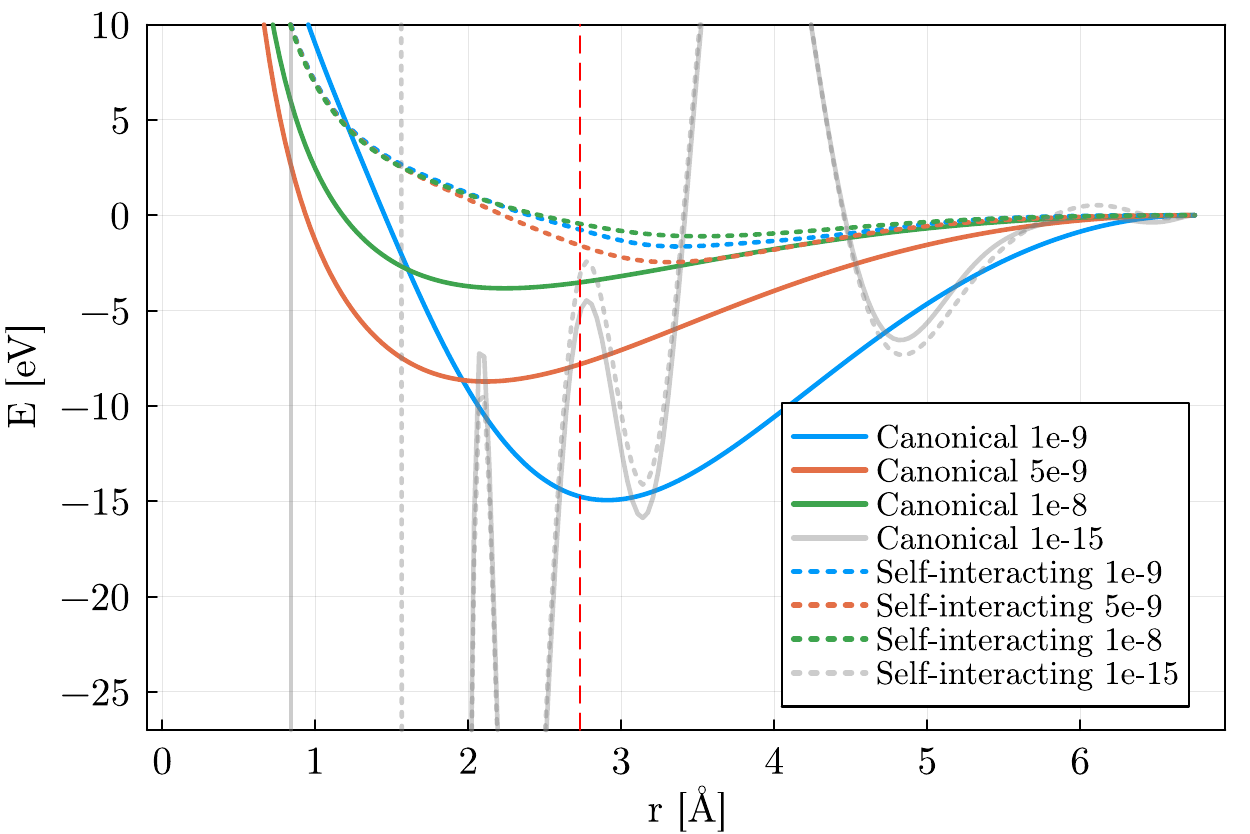}
        \caption{Number of observations: 6379 (20\%)}
     \end{subfigure}
     \hfill
     \begin{subfigure}[b]{0.49\textwidth}
        \centering
        \includegraphics[width=\textwidth]{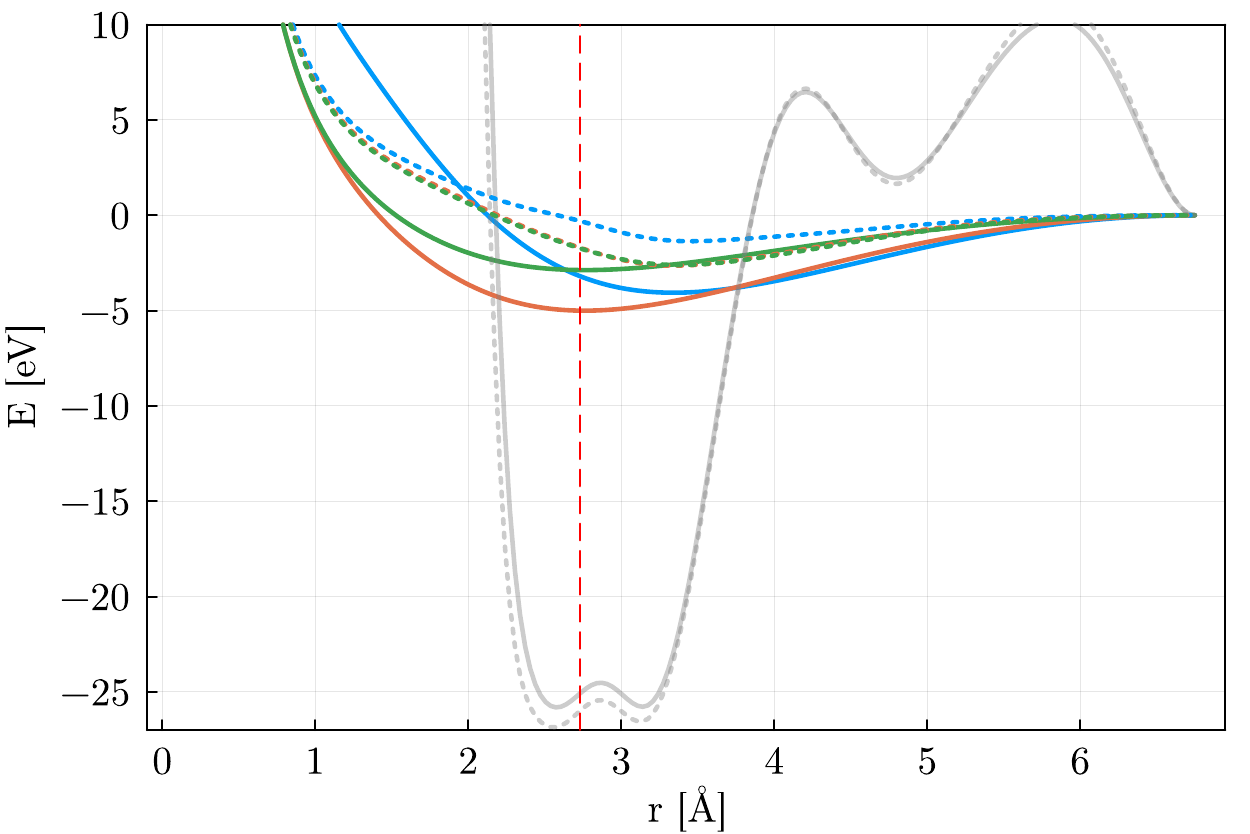}
        \caption{Number of observations: 30455 (full)}
     \end{subfigure}
      \hfill
     \begin{subfigure}[b]{0.49\textwidth}
        \centering
        \includegraphics[width=\textwidth]{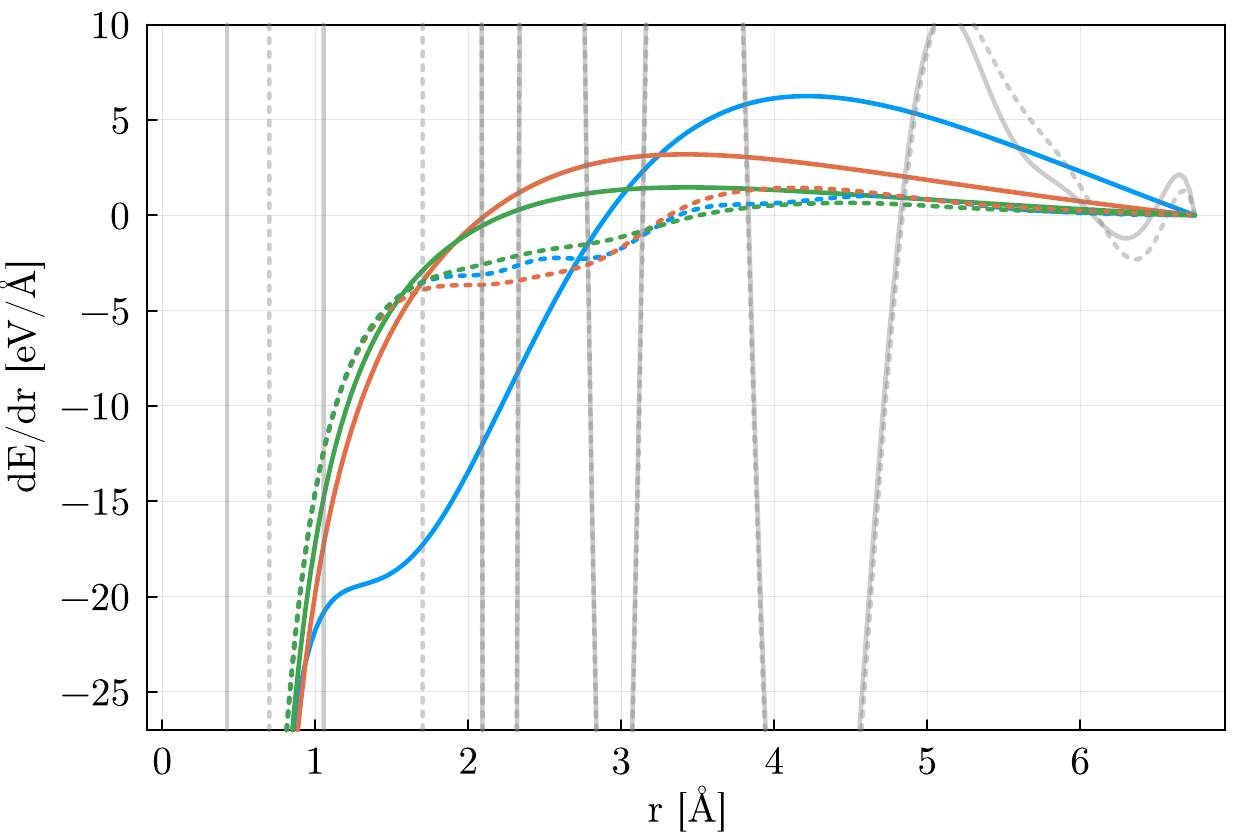}
        \caption{Number of observations: 6379 (20\%) }
     \end{subfigure}
      \begin{subfigure}[b]{0.49\textwidth}
        \centering
        \includegraphics[width=\textwidth]{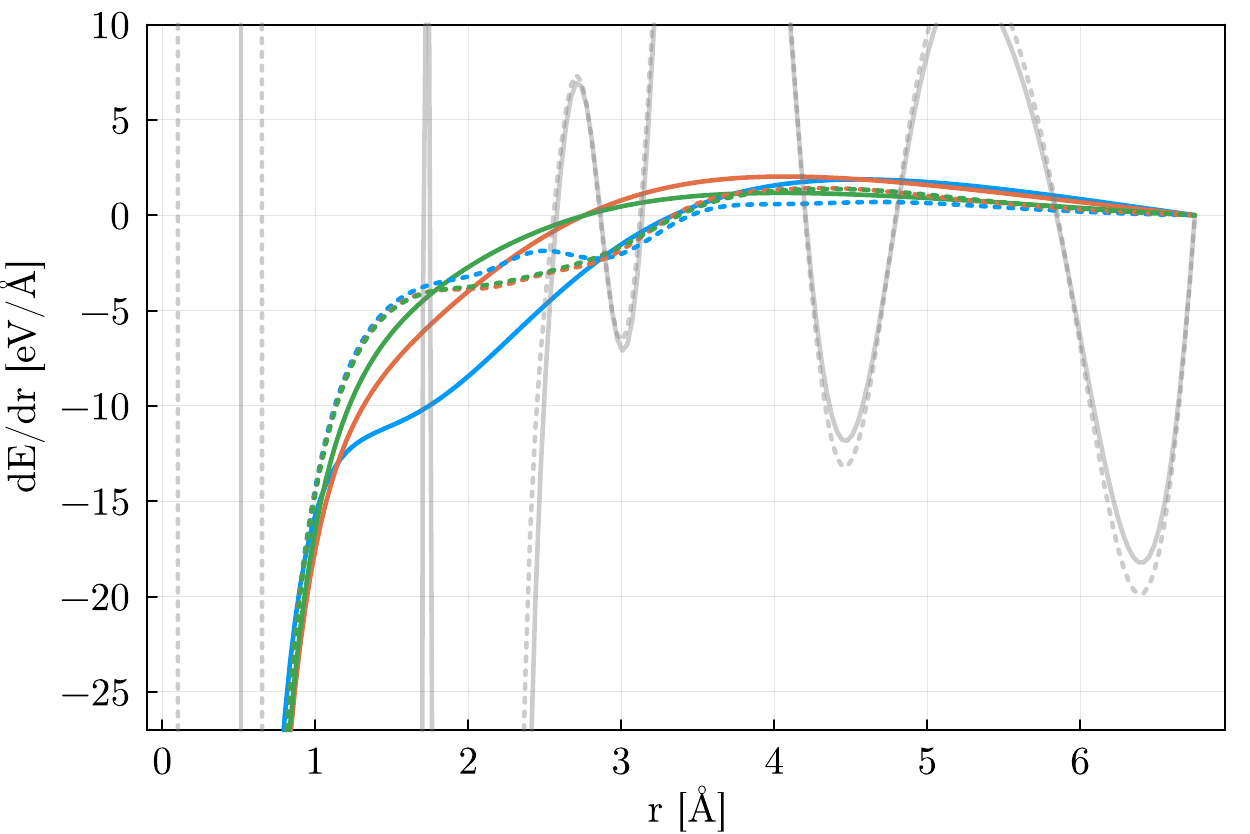}
        \caption{Number of observations: 30455 (full) }
     \end{subfigure}
     \caption{Plots of the dimer and dimer derivative of Mo for the indicated number of observations. The red vertical line indicates the nearest neighbour distance of Mo; A key observation is that the dimer of the canonical model is smooth, repulsive and attains its minimum close to the nearest neighbour distance, while the self-interacting model is more oscillatory.}
     \label{fig:Mo_svd_dimer}
\end{figure}

\subsection{Fe data set}
\label{section:Fedata}
Another example we consider is a Fe data set constructed via active learning with the Gaussian Approximation Potential (GAP) \cite{zhang2020Fe}. The purpose of the GAP model was to simulate the deformation mechanisms of the crack-tip to demonstrate the semi-brittle nature of bcc iron. Such predictions involves a broad range of material behavior, hence the Fe dataset is very diverse and therefore challenging to fit. 
In the following experiment, we follow a similar set up as in the Zuo (2020) data set but using larger model sizes. We label a sequence of ACE models via {\em levels} 1 through 5; higher levels correspond to larger basis sizes.; see Appendix \ref{Appendix:hyperparameters_Fe} for details.  A weighted least squares system is set up using weights on individual observations exactly as described in \cite{zhang2020Fe}. 

Since the dataset is very diverse, but there are limited structures within each subgroup, it is more interesting to explore the convergence of the model accuracy as we increase the basis size rather than the number of observations. This is shown in Figure~\ref{fig:RMSE_Fe} for a range of regularization parameters. We observe that for all regularization parameters, the canonical model has significantly better accuracy, and the accuracy gap is particularly large for stronger regularization. This is a very promising result since it suggests that the canonical cluster expansion parameterization can be more strongly regularized (hence leading to better generalization) without losing too much accuracy. 

\begin{figure}[h!]
\captionsetup[subfigure]{justification=centering}
     \begin{subfigure}[b]{0.49\textwidth}
        \centering
        \includegraphics[width=\textwidth]{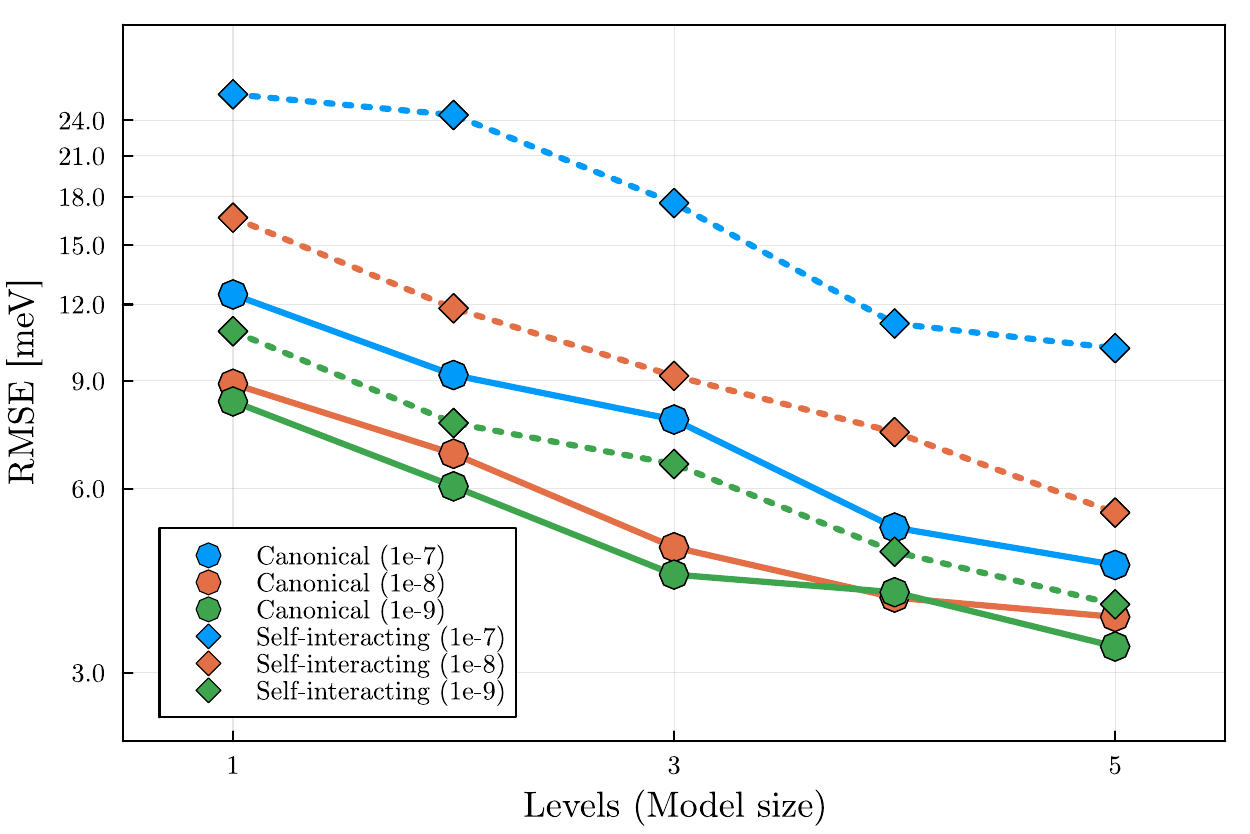}
        \caption{Predicted energy error}
     \end{subfigure}
     \hfill
     \begin{subfigure}[b]{0.49\textwidth}
        \centering
        \includegraphics[width=\textwidth]{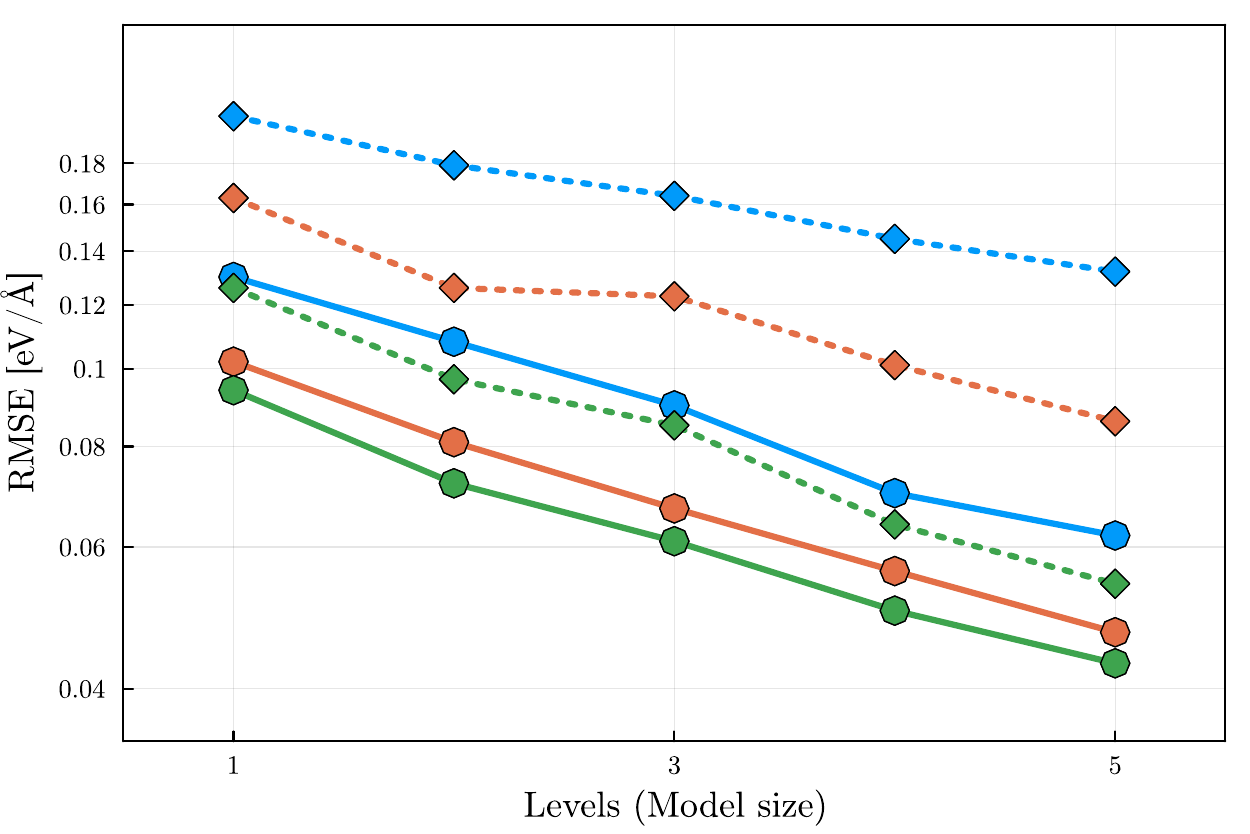}
        \caption{Predicted force error}
     \end{subfigure}
        \caption{Convergence plots of the RMSE with respect to model levels for total energy (a) and forces (b) for the Fe data set with the indicated relative tolerances in the truncated SVD.}
        \label{fig:RMSE_Fe}
\end{figure}

Secondly, we consider again the ``physicality'' of the fit by exploring whether the canonical and self-interacting models are able to recover a realistic dimer curve shape. In this test, both the canonical and self-interacting dimer curves are very smooth (likely due to the significantly more diverse dataset). Both models appear to converge stably to a chemically intuitive dimer curve shape as the regularization parameter increases. However, similarly as for the Zuo (2020) dataset, the canonical model still captures the potential energy minimum accurately while the self-interacting model does not.

\begin{figure}[h!]
\captionsetup[subfigure]{justification=centering}
      \begin{subfigure}[b]{0.33\textwidth}
        \centering
        \includegraphics[width=\textwidth]{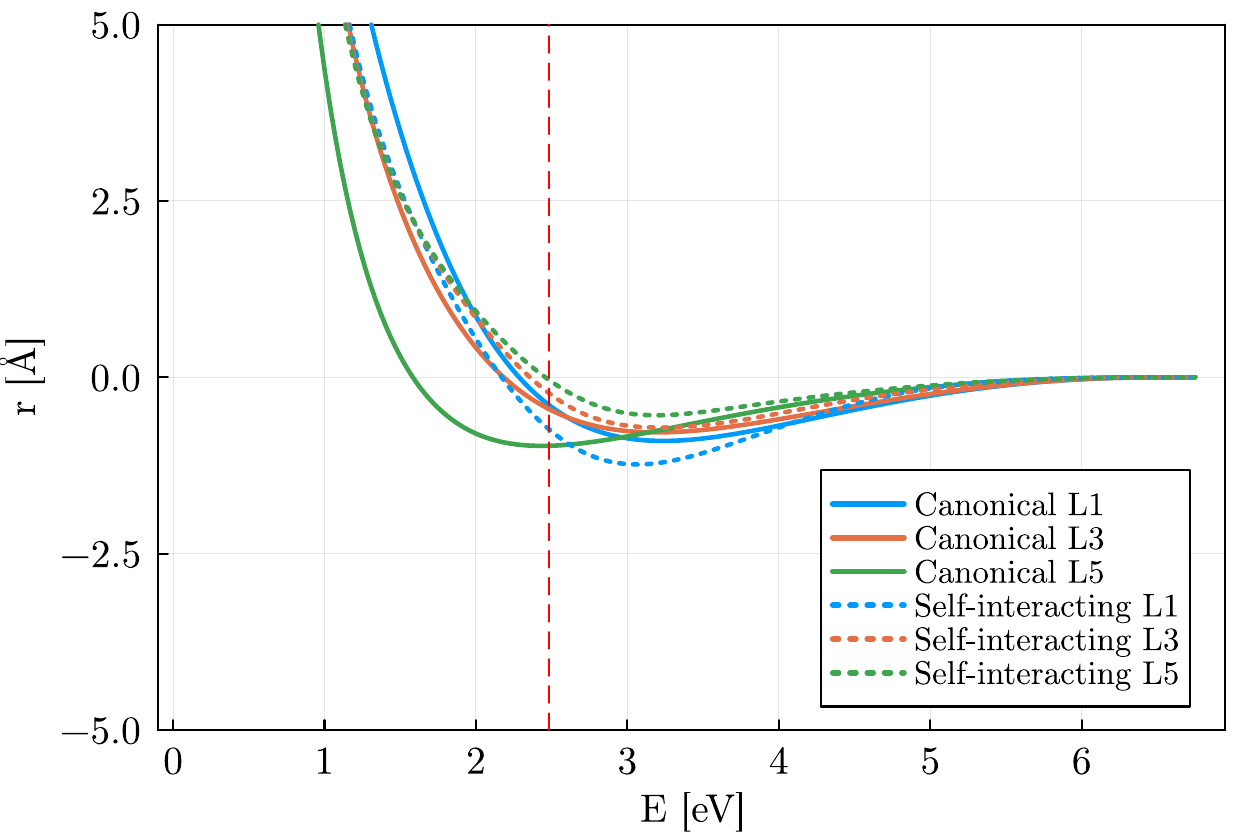}
        \caption{Relative tolerance: $10^{-8}$}
     \end{subfigure}
      \begin{subfigure}[b]{0.33\textwidth}
        \centering
        \includegraphics[width=\textwidth]{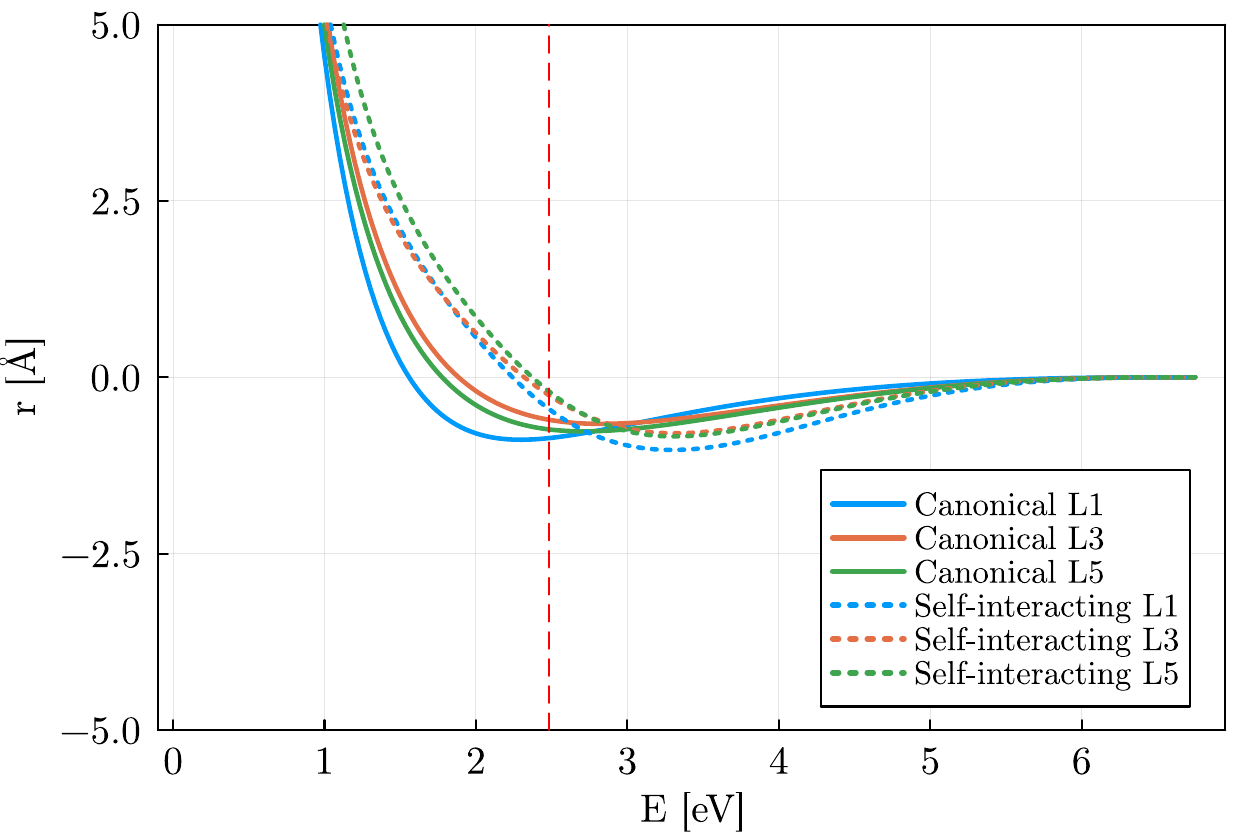}
        \caption{Relative tolerance: $5 \times 10^{-8}$}
     \end{subfigure}
      \begin{subfigure}[b]{0.33\textwidth}
        \centering
        \includegraphics[width=\textwidth]{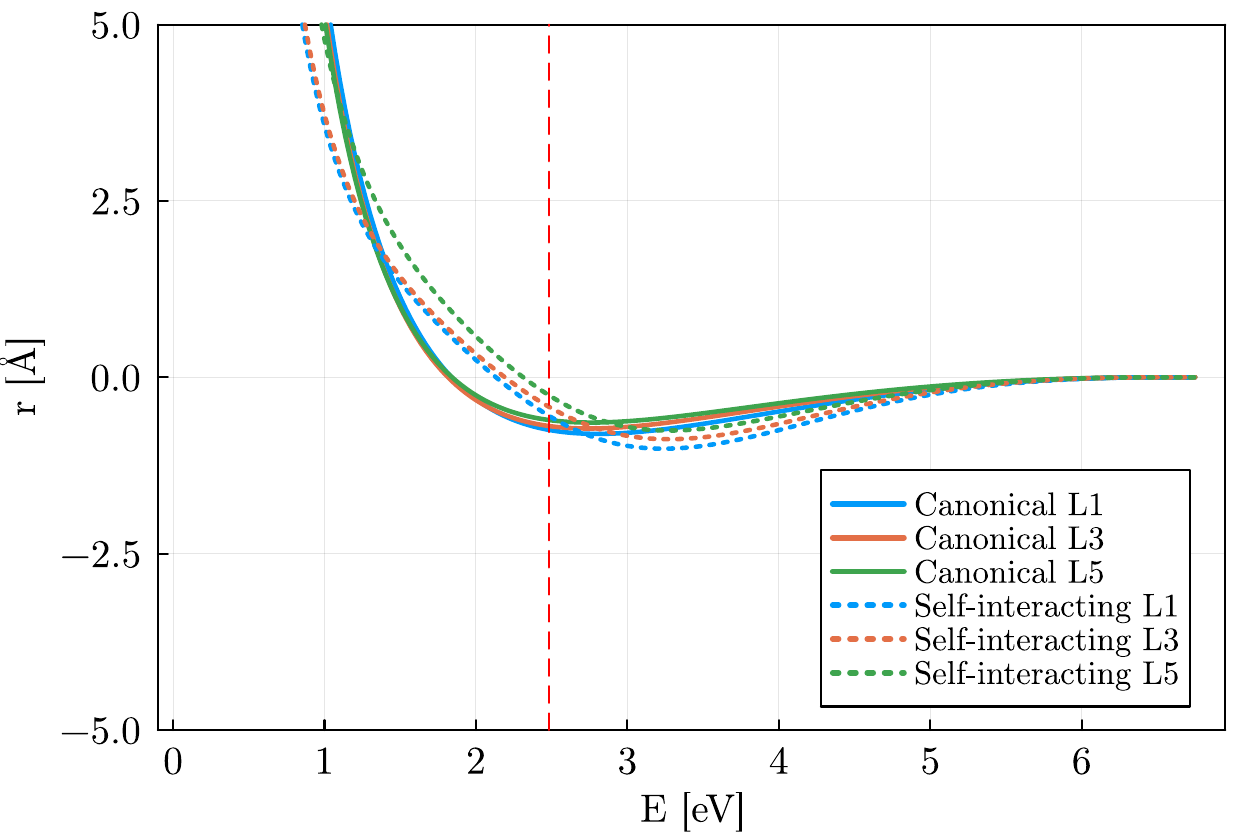}
        \caption{Relative tolerance: $10^{-7}$}
     \end{subfigure}
      \caption{Dimer curves for the Fe data set with the indicated relative tolerances in the truncated SVD. The red vertical line indicates the nearest neighbour distance of the Fe ground state crystal.}
      \label{fig:Fe_dimer}
\end{figure}

\section{Conclusion}
We developed a framework to transform the computationally efficient  {\em self-interacting cluster expansion} into the physically motivated {\em canonical cluster expansion}. 
We provided details how to generate the transform in practice as well as an in-depth analysis of the cost of applying the transform and of the benefits of using the canonical cluster expansion instead of the self-interacting cluster expansion in a range of academic and realistic examples. 

Our numerical evidence suggests that typical smoothness priors perform better when applied to the canonical cluster expansion and it is also less sensitive to changes in the regularization parameter, allowing more efficient hyper parameter tuning. For regression of certain ill-conditioned {\em symmetric functions}, we observe notable improvements in prediction accuracy over the self-interacting expansion. For regression of {\em multi-set functions} the accuracy improvements are relatively minor or even negligible. This is explained by the condition number tests in Section~\ref{section:ConditionNumber}.
These observations empirically support the considerable success of the existing ACE framework, which employs the self-interacting cluster expansion and which is normally applied to the regression of set functions. 

On the other hand, we did observe that the canonical cluster expansion can lead to improved qualitative properties of the regressed target function. Our overall conclusion is therefore, that the canonical cluster expansion is a worthwhile drop-in replacement for the self-interacting cluster expansion in a wide variety of modeling tasks. Our results suggest that it will {\em generally} lead to improved numerical robustness and sometimes also improved prediction accuracy. 
Further insight beyond the scope of this paper can be gained by more stringent and rigorous testing in specific application domains, in particular analyzing the effect of different regularization and regression techniques.

\section*{Acknowledgements}
This work was supported by NSERC Discovery Grant GR019381 and NFRF Exploration Grant GR022937. 
TSG was also supported by a PIMS-Simons postdoctoral fellowship, jointly funded by the Pacific Institute for the Mathematical Sciences (PIMS) and the Simons Foundation. 

\appendix
\section*{Appendix}
\section{The purification operator and conservative properties}
\label{Appendix:RecursionFormula}

\begin{lemma}
\label{Lemma:RecursionFormula}
 Let $\{\phi_k\}_{k \in \mathcal{I}}$ be a countable family of linearly independent one-particle basis functions i.e., $\phi_k : \Omega \to \bbF$. Suppose that any pointwise product of $\phi_{k_1}(\bx)\phi_{k_2}(\bx)$ can be linearized exactly in terms of a finite sum of $\phi_{\kappa}(\bx)$ \eqref{eq:linearize_product}. Let $\mathcal{A}_{\mathbf{k}}$ and $\mathbf{A}_{\mathbf{k}}$ be defined as in \eqref{equa:PureBasis} and \eqref{equa:ImpureBasis}. Then for every $\mathbf{k} = (k_1, k_2, \dots, k_{N})$ and $k_{N+1}$, there exist coefficients $\mathcal{P}^{k_{\beta}k_{N+1}}_{\kappa}$ such that 
\begin{equation}
    \label{equa:Purify}
    \mathcal{A}_{(\mathbf{k}, k_{N+1})} = \mathcal{A}_{\mathbf{k}}A_{k_{N+1}} - \sum\limits^N_{\beta = 1}\sum_{\kappa} \mathcal{P}^{k_{\beta}k_{N+1}}_{\kappa}\mathcal{A}_{\mathbf{k\left[\beta, \kappa\right]}},
\end{equation}
where $\mathbf{k}\left[\beta, \kappa\right] := (k'_1, k'_2, \dots, k'_N)$ with:
$$
k'_\alpha =
\begin{cases}
k_\alpha,  \quad \alpha \neq \beta \\
\kappa, \quad \hspace{2mm} \alpha = \beta
\end{cases}
.
$$
In particular, $\mathcal{P}^{k_{\beta} k_{N+1}}_{\kappa}$ are the linearization coefficients appearing in the linearization of products of the form $\phi_{k_{\beta}} \phi_{k_{N+1}}$.
\end{lemma}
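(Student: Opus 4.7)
The plan is to prove the recursion by expanding the product $\mathcal{A}_{\mathbf{k}} A_{k_{N+1}}$ directly from the definitions and partitioning the resulting index sum. First, I would write
\begin{equation*}
\mathcal{A}_{\mathbf{k}} A_{k_{N+1}} = \sum_{j_1 \neq \cdots \neq j_N} \sum_{j_{N+1}} \phi_{k_{N+1}}(\bx_{j_{N+1}}) \prod_{t=1}^{N} \phi_{k_t}(\bx_{j_t}),
\end{equation*}
and then split the inner sum over $j_{N+1}$ according to whether $j_{N+1} \notin \{j_1, \dots, j_N\}$ or $j_{N+1} = j_\beta$ for a (necessarily unique) $\beta \in \{1,\dots,N\}$, since the $j_t$ are pairwise distinct.

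The first case contributes exactly the sum over $N+1$ pairwise distinct indices, which by \eqref{equa:PureBasis} is $\mathcal{A}_{(\mathbf{k},k_{N+1})}$. For the second case, collapsing $j_{N+1}$ to $j_\beta$ produces the factor $\phi_{k_\beta}(\bx_{j_\beta}) \phi_{k_{N+1}}(\bx_{j_\beta})$ at the single particle $\bx_{j_\beta}$. Applying the linearization assumption \eqref{eq:linearize_product} rewrites this factor as $\sum_\kappa \mathcal{P}^{k_\beta k_{N+1}}_\kappa \phi_\kappa(\bx_{j_\beta})$. Substituting back and swapping the summation over $\kappa$ with the summation over $j_1 \neq \cdots \neq j_N$ leaves the original product with the $\beta$-th factor $\phi_{k_\beta}$ replaced by $\phi_\kappa$, which is precisely $\mathcal{A}_{\mathbf{k}[\beta,\kappa]}$.

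Summing the $\beta$-contributions and moving them to the left-hand side then produces
\begin{equation*}
\mathcal{A}_{\mathbf{k}} A_{k_{N+1}} = \mathcal{A}_{(\mathbf{k}, k_{N+1})} + \sum_{\beta=1}^{N} \sum_{\kappa} \mathcal{P}^{k_\beta k_{N+1}}_\kappa \mathcal{A}_{\mathbf{k}[\beta,\kappa]},
\end{equation*}
which is equivalent to \eqref{equa:Purify} after rearrangement.

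No step involves a deep idea; the only subtlety to handle carefully is the bookkeeping in the index partition, specifically the observation that because $j_1,\dots,j_N$ are pairwise distinct the coincidence $j_{N+1} = j_\beta$ uniquely identifies a single $\beta$, which is what prevents double counting and makes the sum over $\beta$ a disjoint decomposition. I expect this case analysis to be the main (only) obstacle, and it is purely combinatorial. The identification of the linearization coefficients with those of $\phi_{k_\beta}\phi_{k_{N+1}}$ is then immediate from \eqref{eq:linearize_product}, giving the final statement.
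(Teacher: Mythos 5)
Your proposal is correct and follows essentially the same route as the paper's proof: both decompose $\mathcal{A}_{\mathbf{k}}A_{k_{N+1}}$ into the fully distinct-index part $\mathcal{A}_{(\mathbf{k},k_{N+1})}$ plus the self-interaction terms where $j_{N+1}$ collides with a unique $j_\beta$, and then apply the linearization \eqref{eq:linearize_product} to identify each collision term with $\sum_{\kappa}\mathcal{P}^{k_\beta k_{N+1}}_{\kappa}\mathcal{A}_{\mathbf{k}[\beta,\kappa]}$. Your explicit remark that the pairwise distinctness of $j_1,\dots,j_N$ makes the collision cases disjoint is the same bookkeeping the paper states implicitly.
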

\begin{proof}
By assumption, for each $\beta = 1, \cdots, N$, there exist finitely many $\mathcal{P}^{k_{\beta}k_{N+1}}_{\kappa}$ such that
\begin{equation}
\label{equa:linearize1p_2}
 \sum_{\kappa} \mathcal{P}^{k_{\beta}k_{N+1}}_{\kappa} \phi_{\kappa}(\mathbf{x}) = \phi_{k_\beta}(\mathbf{x})\phi_{k_{N+1}}(\mathbf{x}),
\end{equation}
for all $\mathbf{x} \in \Omega$. We first notice that $\mathcal{A}_{(\mathbf{k}, k_{N+1})} - \mathcal{A}_{\mathbf{k}}A_{k_{N+1}}$ is exactly the sum of all self-interacting terms between $\phi_{k_{N+1}}$ and $\phi_{k_\beta}$ for $\beta = 1, \dots, N$. More precisely, the self-interacting term between $\phi_{k_{N+1}}$ and $\phi_{k_\beta}$ reads
\[
\sum\limits_{j_1 \neq \dots \neq j_N} \phi_{k_1}(\mathbf{x}_{j_1}) \dots \phi_{k_\beta}(\mathbf{x}_{j_\beta}) \dots \phi_{k_N}(\mathbf{x}_{j_N})\phi_{k_{N+1}}(\mathbf{x}_{j_{\beta}})
\]
for each pair $(k_\beta, k_{N+1})$. Now for each $\beta$, we have
\begin{align*}
    \sum_{\kappa} \mathcal{P}^{k_{\beta}k_{N+1}}_{\kappa}\mathcal{A}_{\mathbf{k[\beta, \kappa]}}
   & = \sum_{\kappa} \mathcal{P}^{k_{\beta}k_{N+1}}_{\kappa}\sum\limits_{j_1 \neq \dots \neq j_N} \phi_{k_1}(\mathbf{x}_{j_1}) \dots \phi_{\kappa}(\mathbf{x}_{j_{\beta}}) \dots \phi_{k_N}(\mathbf{x}_{j_N})\\
   & = \sum\limits_{j_1 \neq \dots \neq j_N} \phi_{k_1}(\mathbf{x}_{j_1}) \dots \big(\sum_{\kappa} \mathcal{P}^{k_{\beta}k_{N+1}}_{\kappa} \phi_{\kappa}(\mathbf{x}_{j_{\beta}})\big) \dots \phi_{k_N}(\mathbf{x}_{j_N})\\
   & = \sum\limits_{j_1 \neq \dots \neq j_N} \phi_{k_1}(\mathbf{x}_{j_1}) \dots \phi_{k_\beta}(\mathbf{x}_{j_\beta}) \dots \phi_{k_N}(\mathbf{x}_{j_N})\phi_{k_{N+1}}(\mathbf{x}_{j_{\beta}}). \qquad [\text{ by } (\ref{equa:linearize1p_2})]
\end{align*}
From the above calculation we can see that $\sum_{\kappa} \mathcal{P}^{k_{\beta}k_{N+1}}_{\kappa}\mathcal{A}_{\mathbf{k\left[\beta, \kappa\right]}}$ is exactly the self-interaction corresponding to bases $\phi_{k_\beta}$ and $\phi_{{k}_{N+1}}$ which is left from $\mathcal{A}_{(\mathbf{k}, k_{N+1})} - \mathcal{A}_{\mathbf{k}}A_{k_{N+1}}$. Therefore, by summing over $\beta$ for each $\beta = 1, 2, \dots. N$, we have:
$$\mathcal{A}_{(\mathbf{k}, k_{N+1})} = \mathcal{A}_{\mathbf{k}}A_{k_{N+1}} - \sum\limits^N_{\beta = 1}\sum_{\kappa} \mathcal{P}^{k_{\beta}k_{N+1}}_{\kappa}\mathcal{A}_{\mathbf{k[\beta, \kappa]}},$$
which completes the proof.
\end{proof}

Following from the above lemma, we now prove Theorem \ref{theorem:spanKK'}.

\begin{proof}[Proof of Theorem \ref{theorem:spanKK'}]
    For each $\bk \in \mathbf{K}$, by recursively expanding \eqref{equa:Purify}, one can readily observe that each $\mathcal{A}_{\bk}$ can be expressed as linear combination of the $\mathbf{A}$-basis \eqref{eq:linearizek'}. The result follows from collecting all non-zero indices from the recursive expansion to form $\mathbf{K}'$.
\end{proof}

\section{Computational efficiency of the purification transform}
\label{Appendix:Efficiency}

\begin{proof}[Proof of Proposition \ref{proposition:preserving}]
The proof proceeds by induction, with the $N=2$ case being obtained via \eqref{eq:PurifyN=2}. Assume the statement holds for order $N$, i.e. that for each $\bk$ of order $N$ the expansion
$$
\mathcal{A}_{\bk} = \sum_{\bk'} \mathcal{P}^{\bk}_{\bk'}\mathbf{A}
$$
is total-degree preserving. Then, for order $N + 1$, we have
$$
\begin{aligned}
\mathcal{A}_{(\mathbf{k}, k_{N+1})} 
& = \mathcal{A}_{\mathbf{k}}A_{k_{N+1}} - \sum\limits^N_{\beta = 1}\sum_{\kappa} \mathcal{P}^{k_{\beta}k_{N+1}}_{\kappa}\mathcal{A}_{\mathbf{k\left[\beta, \kappa\right]}} \\
& = \sum_{\bk'} \mathcal{P}_{\bk'} \mathbf{A}_{\bk'} A_{k_{N+1}} - \sum\limits^N_{\beta = 1}\sum_{\kappa} \mathcal{P}^{k_{\beta}k_{N+1}}_{\kappa}\mathcal{A}_{\mathbf{k\left[\beta, \kappa\right]}} \\
& = \sum_{\bk'} \mathcal{P}_{\bk'} \mathbf{A}_{(\bk', k_{N+1})} - \sum\limits^N_{\beta = 1}\sum_{\kappa} \mathcal{P}^{k_{\beta}k_{N+1}}_{\kappa}\mathcal{A}_{\mathbf{k\left[\beta, \kappa\right]}}. \\
\end{aligned}
$$
In the above, the indices of every term in the first summations sums up to degree $\leq \sum \bk + k_{N+1}$ by the induction hypothesis. The second term is a sum of the $\mathcal{A}$ basis with indices $\sum \mathbf{k}\left[\beta, \kappa \right]$, satisfying assumption \eqref{equa:prodspan_totdeg} and the induction hypothesis. 
\end{proof}

\begin{proof}[Proof of Proposition \ref{proposition:bound}]
It is straightforward to observe that $M_{N} \leq M_{N-1} + K(N-1)M_{N-1} = (K(N-1)+1)M_{N-1}$ from (\ref{equa:Purify}). The result follows from applying this recursively.
\end{proof}

\begin{exmp}
\label{example:cancellationCoeffs}
\textup{To observe additional cancellations in the coefficients, we use an $\mathcal{A}_{k_1 k_2 k_3}$ formed from Chebyshev polynomials as an example. Recall that the product of Chebyshev polynomials $T_m$ and $T_n$ can be written as a weighted sum of $T_{m+n}$ and $T_{m-n}$ \cite{giorgi2011polynomial_prodcheb}. Directly applying \eqref{equa:Purify} gives}
\begin{align*}
\mathcal{A}_{k_1 k_2 k_3} &= \mathcal{A}_{k_1 k_2}A_{k_3} - \sum_{\kappa} \mathcal{P}^{k_1 k_3}_{\kappa}\mathcal{A}_{\kappa k_2} - \sum_{\kappa} \mathcal{P}^{k_2 k_3}_{\kappa}\mathcal{A}_{k_1 \kappa} \\
& = (A_{k_1 k_2} - \mathcal{P}^{k_1 k_2}_{k_1 + k_2} A_{k_1 + k_2} - \mathcal{P}^{k_1 k_2}_{k_2 - k_1} A_{k_2 - k_1})A_{k_3} - \sum_{\kappa} \mathcal{P}^{k_1 k_3}_{\kappa}\mathcal{A}_{\kappa k_2} - \sum_{\kappa} \mathcal{P}^{k_2 k_3}_{\kappa}\mathcal{A}_{k_1 \kappa}    
\end{align*}
\textup{The part without $\kappa$-summation contributes the following non-zero indices to $C$: $(k_1, k_2, k_3), (k_1 + k_2, k_3), (k_2 - k_1, k_3)$. The remaining two summations each contribute six terms:}
$$
\begin{cases}
(k_1 + k_3, k_2), (k_1 + k_2 + k_3), (k_1 + k_3 - k_2) \\
(k_3 - k_1, k_2), (k_3 - k_1 + k_2), (|k_3 - k_1 - k_2|) \\
\end{cases}
\begin{cases}
(k_2 + k_3, k_1), (k_1 + k_2 + k_3), (k_2 + k_3 - k_1) \\
(k_3 - k_2, k_1), (k_3 - k_2 + k_1), (|k_3 - k_2 - k_1|) \\
\end{cases}
$$
\textup{Notice that there are duplications in indices and there are thus only at most $11$ non-zero elements (which happens when $k_i \neq k_j$ for all $i \neq j$) in total for the purification of an order 3 basis using Chebyshev polynomials. This explains the sub-asymptotic behavior in Figure \ref{fig:ChebBound}.}
\end{exmp}

\begin{proof}[Proof of Proposition \ref{Proposition:parallelizable}]
For clarity we write the index as $(\bk, k_{N+1})$ and show that purification of an order $N+1$ basis only depends on $\mathbf{A}_{\bk'}$ basis functions of order $N$. By \eqref{equa:Purify}
    \begin{align*}
   \mathcal{A}_{(\mathbf{k}, k_{N+1})} & = \mathcal{A}_{\mathbf{k}}A_{k_{N+1}} - \sum\limits^N_{\beta = 1}\sum_{\kappa} \mathcal{P}^{k_{\beta}k_{N+1}}_{\kappa}\mathcal{A}_{\mathbf{k\left[\beta, \kappa\right]}}\\
   & = \left(\mathcal{A}_{\mathbf{k-1}}A_{k_{N}} - \sum\limits^{N-1}_{\beta = 1}\sum_{\kappa} \mathcal{P}^{k_{\beta}k_{N}}_{\kappa}\mathcal{A}_{\mathbf{k-1\left[\beta, \kappa\right]}}\right) A_{k_{N+1}} - \sum\limits^N_{\beta = 1}\sum_{\kappa} \mathcal{P}^{k_{\beta}k_{N+1}}_{\kappa}\mathcal{A}_{\mathbf{k\left[\beta, \kappa\right]}}\\
   & = \mathcal{A}_{\mathbf{k-1}}A_{k_{N} k_{N+1}} - \underbrace{\sum\limits^{N-1}_{\beta = 1}\sum_{\kappa} \mathcal{P}^{k_{\beta}k_{N}}_{\kappa}\mathcal{A}_{\mathbf{k-1\left[\beta, \kappa\right]}}A_{k_{N+1}} - \sum\limits^N_{\beta = 1}\sum_{\kappa} \mathcal{P}^{k_{\beta}k_{N+1}}_{\kappa}\mathcal{A}_{\mathbf{k\left[\beta, \kappa\right]}}}_{\leq \text{order $N$}}\\
   & = A_{k_1 \cdots k_{N+1}} - \text{L.O.T}.
\end{align*}
where L.O.T denotes terms of order $\leq N$ and the last line follows from recursively expanding all $\mathcal{A}$ terms as in the second equality.
\end{proof}

\section{The ACE model for MLIPs}
\subsection{Model construction}
\label{Appendix:ModelConstruction}
The ACE model applied throughout our experiments in Section~\ref{section:MLIPs} slightly differs from simply the $O(3)$ invariant basis in Section~\ref{section:G=O(3)} with an additional pair potential and is implemented as {\tt acemodel} in {\tt ACEpotentials.jl} \cite{2023-acepotentials}. More precisely, {\tt acemodel} is a sum of a pair (order $= 1$) and many-body $\mathcal{B}$ or $\mathbf{B}$-basis of maximum order $\mathcal{N}$, where $\mathcal{N}$ corresponding to the maximum correlation order of the potential terms, determined by the user, as in \eqref{equa:CanonicalExpansion} and throughout the paper. A cutoff $r_{\rm cut} > 0$ is specified to enforce decay of pair as $r \rightarrow r_{\rm cut}$ and of many body terms as $r \rightarrow 0$ and $r \rightarrow r_{\rm cut}$. The default choice for the many-body basis is
\begin{equation}
\label{equa:default_env}
f_{\rm env}(r_{ij}) = y^2 (y - y_{\rm cut})^2    
\end{equation}
where $y_{\rm cut} = y(r_{\rm cut})$ and $y$ is a coordinate transformation as in \eqref{equa:MLIP_1pbasis}. The default choice of envelope for the paired basis is a modified Coulomb potential to ensure a smooth cut off as $r \rightarrow r_{\rm cut}$ and repulsive behavior as $r \rightarrow 0$,
$$
f_{\rm env}(r_{ij}) = \Big(\frac{r_{ij}}{r_0}\Big)^{-1} - \Big(\frac{r_{\rm cut}}{r_0}\Big)^{-1} + \Big(\frac{r_{\rm cut}}{r_0}\Big)^{-2}\Big(\frac{r_{ij}}{r_0} - \frac{r_{\rm cut}}{r_0}\Big).
$$
Here, $r_0$ is an estimate of the equilibrium bond-length, which is system-dependent. We also note here that the choice of coordinate transformation $y$ is independent of the purification framework and beyond the focus of this work. We refer to \cite{2023-acepotentials} and the documentation of {\tt ACEpotentials.jl} \cite{ACEpotentials_sw} for more details.

\subsection{The effect of envelope functions}
\label{Appendix:ExtraBasis}
The incorporation of envelope functions results in the basis being no longer total degree deserving. Table \ref{table:ExtraBasis} shows an example on the number of extra basis elements required for the different sparsifications. For flexibility, an additional type sparsification, which generalize the idea of total degree, is commonly applied. One specifies a tuple of total degree $D = (D_1, \cdots, D_\mathcal{N})$, which means that all the basis elements of correlation order $N$ has a total degree $\leq D_N$ for $1 \leq N \leq \mathcal{N}$. For clarity, the table contains an extra column for the total degree in {\tt ACEpotentials.jl}, denoted by $D^{\text{ACEpot}}$, which starts with $1$ instead of $0$. The actual total degree of order $N$ (regardless of envelope function) in another column of the table is calculated by $D_N = D_N^{\text{ACEpot}}-N$. An envelope function of degree = $4$ (as in \eqref{equa:default_env}) is used and the actual total degree with envelope function $D^{\text{env}}$ is also computed in a separate column.

We make three key observations from Table \ref{table:ExtraBasis}:
\begin{enumerate}[(i)]
    \item The analysis of the two conditions: $\sum \bm = 0$ and $\sum \bl = \text{even}$ on the angular part (i.e. spherical harmonics) of the basis in Section~\ref{section:G=O(3)} is not affected by the radial envelope function; 
    \item With the introduction of the envelope function, Proposition \ref{proposition:preserving} cannot be applied directly with basis sparsification as in Section~\ref{section:G=O(3)} and extra basis elements supplementing insufficiency in linearization of the radial part of $\phi_{n_1 l_1 m_1}\phi_{n_2 l_2 m_2}$ have to be evaluated;
    \item In some cases, if the difference in total degree for each order is large enough, the two bases nevertheless have the same span since the lower order term is already sufficient for removing the self-interacting terms without extra basis. This corresponds to the cases where number of extra $\mathbf{A}_{\bk}$ basis is zero in Table \ref{table:ExtraBasis}.
\end{enumerate}

\begin{table}[h!]
\centering
\begin{tabular}{ p{2.0cm} p{2.0cm} p{2.0cm} p{1.25cm} p{1.55cm} p{1.25cm} p{1.6cm} }
 \hline
$D^{\text{ACEpot}}$ & $D$ & $D^{\text{env}}$ & \# of $\mathbf{A}_{\bk}$ basis & \# of extra $\mathbf{A}_{\bk}$ basis  & \# of $\mathcal{B}_{\alpha}$ and $\mathbf{B}_{\alpha}$ basis & Equivalent? \\
 \hline
 22, 18, 14, 10 & 21, 16, 11, 6 & 25, 24, 23, 22 & 1709 & 0 & 539 & Yes \\
 24, 20, 16, 12 & 23, 18, 13, 8 & 27, 26, 25, 24 & 3704 & 0 & 929 & Yes \\
 26, 22, 18, 14 & 25, 20, 15, 10 & 29, 28, 27, 26 & 8034 & 0 & 1608 & Yes \\
 \hline
 18, 15, 12, 9 & 17, 13, 9, 5 & 21, 21, 21, 21 & 736 & 0 & 299 & Yes\\
 20, 17, 14, 11 & 19, 15, 11, 7 & 23, 23, 23, 23 & 1669 & 0 & 538 & Yes\\
 22, 19, 16, 13 & 21, 17, 13, 9 & 25, 25, 25, 25 & 3842 & 0 & 969 & Yes\\
 \hline
 18, 16, 14, 12 & 17, 14, 11, 8 & 21, 22, 23, 24 & 1718 & 362 & 559 & No\\
 20, 18, 16, 14 & 19, 16, 13, 10 & 23, 24, 25, 26 & 4226 & 886 & 1056 & No\\
 22, 20, 18, 16 & 21, 18, 15, 12 & 25, 26, 27, 28 & 10363 & 1771 & 1969 & No\\
 \hline
 17, 16, 15, 14 & 16, 14, 12, 10 & 20, 22, 24, 26 & 3168 & 1383 & 862 & No\\
 19, 18, 17, 16 & 18, 16, 14, 12 & 22, 24, 26, 28 & 8245 & 2907 & 1677 & No\\
 21, 20, 19, 18 & 20, 18, 16, 14 & 24, 26, 28, 30 & 20845 & 5855 & 3197 & No\\

\end{tabular}
    \caption{In the table, $D^{\text{ACEpot}}$ is the degree used in {\tt ACEpotentials.jl} \cite{2023-acepotentials}, $D$ is calculated by $D_N = D_N^{\text{ACEpot}}-N$ and $D^{\text{env}}$ is calculated by $D^{\text{env}}_N = D_N+4N$, where $4$ is the polynomial degree of the envelope function $f_{\rm env}$ in \eqref{equa:MLIP_1pbasis}. The equivalence of the two bases sets is verified by checking numerically whether every $\mathcal{B}_{\bn \bl i}$ basis is in ${\rm span}(\{\mathbf{B_{\bn \bl i}}\}_{\bn \bl i})$, and vice versa.}
    \label{table:ExtraBasis}
\end{table}

\section{Hyperparameters for our numerical experiments}
\label{Appendix:hyperparameters}
\subsection{Zuo data set}
\label{Appendix:hyperparameters_zuo}
{\tt acemodel} with total degree = (25, 21, 17, 13) (See \ref{Appendix:ExtraBasis} for the definition of specifying total degree as a tuple) and default weights for the least squares system in {\tt ACEpotentials.jl} \cite{2023-acepotentials} is applied. Only energy and force observations  are used to estimate the parameters. An algebraic smoothness prior with $p = 5$ is used. All other hyper parameters are kept as default.

\subsection{Fe data set}
\label{Appendix:hyperparameters_Fe}
{\tt acemodel} with total degree as specified in Table \ref{table:ExtraBasis} and $r_{\rm cut} = 6.5 \text{\AA}$ is applied. The energy of the one-body term is set to be $\text{Eref} = -3455.6995339$ following the GAP model in \cite{zhang2020Fe}. We specify the weights of the least squares system depending on the configuration type following \cite{zhang2020Fe}. All energy, force and viral observations are used to estimate the parameters. A algebraic smoothness prior of $p = 5$ is used. All other hyper parameters are kept as default.

\begin{table}[]
    \centering
    \begin{tabular}{C{1in}|C{1in}|C{1in}}
        \hline
     \textbf{Levels} & \textbf{Total degree} & \textbf{Basis size } \\\hline
        1 & 18, 14, 10, 6 & 183\\
        2 & 20, 16, 12, 8 & 314\\
        3 & 22, 18, 14, 10 & 539\\
        4 & 24, 20, 16, 12 & 929\\
        5 & 26, 22, 18, 14 & 1609 \\
    \end{tabular}
    \caption{Model levels in Section~\ref{section:Fedata} and corresponding total degree in {\tt ACEpotentials.jl}\cite{2023-acepotentials}. See \ref{Appendix:ExtraBasis} for the definition of specifying total degree as a tuple.}
    \label{table:modelsize}
\end{table}

\newpage

\bibliographystyle{plain}
\bibliography{references.bib}

\begin{thebibliography}{10}

\bibitem{ACE1x}
{\tt ACE1x.jl}.
\newblock Experimental features for {{\tt ACE}}{\tt potentials.jl}.
\newblock {\tt github.com/ACEsuit/ACE1x.jl}.

\bibitem{ACEpotentials_sw}
{\tt ACEpotentials.jl}.
\newblock Documentation and user interface for {Julia}-language development of
  {ACE} potentials.
\newblock {\tt github.com/ACEsuit/ACEpotentials.jl}.

\bibitem{bachmayr2023polynomial}
Markus Bachmayr, Genevieve Dusson, Christoph Ortner, and Jack Thomas.
\newblock Polynomial approximation of symmetric functions.
\newblock {\em arXiv:2109.14771}, 2023. to appear in Math. Comput.

\bibitem{batatia2022e3design}
Ilyes Batatia, Simon Batzner, D{\'a}vid~P{\'e}ter Kov{\'a}cs, Albert Musaelian,
  Gregor~NC Simm, Ralf Drautz, Christoph Ortner, Boris Kozinsky, and G{\'a}bor
  Cs{\'a}nyi.
\newblock The design space of {E}(3)-equivariant atom-centered interatomic
  potentials.
\newblock {\em arXiv:2205.06643}, 2022. to appear in Nature Machine
  Intelligence.

\bibitem{batatia2023general}
Ilyes Batatia, Mario Geiger, Jose Munoz, Tess Smidt, Lior Silberman, and
  Christoph Ortner.
\newblock A general framework for equivariant neural networks on reductive
  {Lie} groups.
\newblock {\em arXiv:2306.00091}, 2023. to appear in Advances in Neural
  Information Processing Systems.

\bibitem{Batatia2022mace}
Ilyes Batatia, David~P Kovacs, Gregor Simm, Christoph Ortner, and Gabor
  Cs{\'a}nyi.
\newblock Mace: Higher order equivariant message passing neural networks for
  fast and accurate force fields.
\newblock In S.~Koyejo, S.~Mohamed, A.~Agarwal, D.~Belgrave, K.~Cho, and A.~Oh,
  editors, {\em Adv. Neural Inf. Process. Syst.}, volume~35, pages
  11423--11436. Curran Associates, Inc., 2022.

\bibitem{batatia2023equivariantmat}
Ilyes Batatia, Lars~L Schaaf, Huajie Chen, G{\'a}bor Cs{\'a}nyi, Christoph
  Ortner, and Felix~A Faber.
\newblock Equivariant matrix function neural networks.
\newblock {\em arXiv:2310.10434}, 2023.

\bibitem{mlinvariant}
Ben Blum-Smith and Soledad Villar.
\newblock Machine learning and invariant theory.
\newblock {\em arXiv:2209.14991}, 2023.

\bibitem{braams2009permutationally}
Bastiaan~J Braams and Joel~M Bowman.
\newblock Permutationally invariant potential energy surfaces in high
  dimensionality.
\newblock {\em Int. Rev. Phys. Chem.}, 28(4):577--606, 2009.

\bibitem{byerly1893elemenatary}
William~Elwood Byerly.
\newblock {\em An elemenatary treatise on Fourier's series, and spherical,
  cylindrical, and ellipsoidal harmonics, with applications to problems in
  mathematical physics}.
\newblock Dover Publications, 1893.

\bibitem{chen2016qm}
Huajie Chen and Christoph Ortner.
\newblock Qm/mm methods for crystalline defects. part 1: Locality of the tight
  binding model.
\newblock {\em Multiscale Model. Sim.}, 14(1):232--264, 2016.

\bibitem{cohen2013stability}
Albert Cohen, Mark~A Davenport, and Dany Leviatan.
\newblock On the stability and accuracy of least squares approximations.
\newblock {\em Found. Comput. Math.}, 13:819--834, 2013.

\bibitem{deringer2019machine}
Volker~L Deringer, Miguel~A Caro, and G{\'a}bor Cs{\'a}nyi.
\newblock Machine learning interatomic potentials as emerging tools for
  materials science.
\newblock {\em Advanced Materials}, 31(46):1902765, 2019.

\bibitem{DrautzACE}
Ralf Drautz.
\newblock Atomic cluster expansion for accurate and transferable interatomic
  potentials.
\newblock {\em Phys. Rev. B}, 99:014104, Jan 2019.

\bibitem{drautzwavefunc}
Ralf Drautz and Christoph Ortner.
\newblock Atomic cluster expansion and wave function representations.
\newblock {\em arXiv:2206.11375}, 2022.

\bibitem{ACECompleteness}
Genevi{\`e}ve Dusson, Markus Bachmayr, G{\'a}bor Cs{\'a}nyi, Ralf Drautz, Simon
  Etter, Cas van~der Oord, and Christoph Ortner.
\newblock Atomic cluster expansion: Completeness, efficiency and stability.
\newblock {\em J. Comput. Phys.}, 454:110946, 2022.

\bibitem{LinearizeProductJacobi}
George Gasper.
\newblock Linearization of the product of {Jacobi polynomials}. {I}.
\newblock {\em Canadian J. Math.}, 22(1):171--175, 1970.

\bibitem{gerken2021geometric}
Jan~E Gerken, Jimmy Aronsson, Oscar Carlsson, Hampus Linander, Fredrik Ohlsson,
  Christoffer Petersson, and Daniel Persson.
\newblock Geometric deep learning and equivariant neural networks.
\newblock {\em Artif. Intell. Rev.}, pages 1--58, 2023.

\bibitem{giorgi2011polynomial_prodcheb}
Pascal Giorgi.
\newblock On polynomial multiplication in {Chebyshev} basis.
\newblock {\em IEEE Trans. Comput.}, 61(6):780--789, 2011.

\bibitem{lietransformer}
Michael~J Hutchinson, Charline Le~Lan, Sheheryar Zaidi, Emilien Dupont,
  Yee~Whye Teh, and Hyunjik Kim.
\newblock Lietransformer: Equivariant self-attention for {L}ie groups.
\newblock In {\em International Conference on Machine Learning}, pages
  4533--4543. PMLR, 2021.

\bibitem{MACEFF}
D{\'a}vid~P{\'e}ter Kov{\'a}cs, Ilyes Batatia, Eszter~Sara Arany, and Gabor
  Csanyi.
\newblock Evaluation of the {MACE} force field architecture: from medicinal
  chemistry to materials science.
\newblock {\em J. Chem. Phys.}, 159(4), jul 2023.

\bibitem{LinearACE4OrgMol}
D{\'a}vid~P{\'e}ter Kov{\'a}cs, Cas van~der Oord, Jiri Kucera, Alice~EA Allen,
  Daniel~J Cole, Christoph Ortner, and G{\'a}bor Cs{\'a}nyi.
\newblock Linear atomic cluster expansion force fields for organic molecules:
  beyond rmse.
\newblock {\em J. Chem. Theory Comput.}, 17(12):7696--7711, 2021.

\bibitem{lee2019set}
Juho Lee, Yoonho Lee, Jungtaek Kim, Adam Kosiorek, Seungjin Choi, and Yee~Whye
  Teh.
\newblock Set transformer: A framework for attention-based
  permutation-invariant neural networks.
\newblock In {\em International Conference on Machine Learning}, pages
  3744--3753. PMLR, 2019.

\bibitem{Li2022DLDFT}
He~Li, Zun Wang, Nianlong Zou, Meng Ye, Runzhang Xu, Xiaoxun Gong, Wenhui Duan,
  and Yong Xu.
\newblock Deep-learning density functional theory hamiltonian for efficient ab
  initio electronic-structure calculation.
\newblock {\em Nat. Comput. Sci.}, 2(6):367--377, 2022.

\bibitem{PACE}
Yury Lysogorskiy, Cas van~der Oord, Anton Bochkarev, Sarath Menon, Matteo
  Rinaldi, Thomas Hammerschmidt, Matous Mrovec, Aidan Thompson, G{\'a}bor
  Cs{\'a}nyi, Christoph Ortner, et~al.
\newblock Performant implementation of the atomic cluster expansion ({PACE})
  and application to copper and silicon.
\newblock {\em npj Comput. Mater.}, 7(1):97, 2021.

\bibitem{Munoz_2022}
Jose~M Munoz, Ilyes Batatia, and Christoph Ortner.
\newblock Boost invariant polynomials for efficient jet tagging.
\newblock {\em Mach. Learn.: Sci. Technol.}, 3(4):04LT05, 2022.

\bibitem{musil2021physics}
Felix Musil, Andrea Grisafi, Albert~P Bart{\'o}k, Christoph Ortner, G{\'a}bor
  Cs{\'a}nyi, and Michele Ceriotti.
\newblock Physics-inspired structural representations for molecules and
  materials.
\newblock {\em Chemical Reviews}, 121(16):9759--9815, 2021.

\bibitem{RecursiveEvalN-bodyequi}
Jigyasa Nigam, Sergey Pozdnyakov, and Michele Ceriotti.
\newblock Recursive evaluation and iterative contraction of n-body equivariant
  features.
\newblock {\em J. Chem. Phys.}, 153(12), 2020.

\bibitem{2019-tbloc0T}
Christoph Ortner, Jack Thomas, and Huajie Chen.
\newblock Locality of interatomic forces in tight binding models for
  insulators.
\newblock {\em ESAIM: Mathematical Modelling and Numerical Analysis},
  54(6):2295--2318, 2020.

\bibitem{drautzcarbon}
Minaam Qamar, Matous Mrovec, Yury Lysogorskiy, Anton Bochkarev, and Ralf
  Drautz.
\newblock Atomic cluster expansion for quantum-accurate large-scale simulations
  of carbon.
\newblock {\em J. Chem. Theory Comput.}, 19(15):5151--5167, 2023.

\bibitem{qi2017pointnet}
Charles~Ruizhongtai Qi, Li~Yi, Hao Su, and Leonidas~J Guibas.
\newblock Pointnet++: Deep hierarchical feature learning on point sets in a
  metric space.
\newblock In I.~Guyon, U.~Von Luxburg, S.~Bengio, H.~Wallach, R.~Fergus,
  S.~Vishwanathan, and R.~Garnett, editors, {\em Adv. Neural Inf. Process.
  Syst.}, volume~30. Curran Associates, Inc., 2017.

\bibitem{thomas2018tensor}
Nathaniel Thomas, Tess Smidt, Steven Kearnes, Lusann Yang, Li~Li, Kai Kohlhoff,
  and Patrick Riley.
\newblock Tensor field networks: Rotation-and translation-equivariant neural
  networks for 3d point clouds.
\newblock {\em arXiv:1802.08219}, 2018.

\bibitem{lloydHypercube}
Lloyd~N. Trefethen.
\newblock Multivariate polynomial approximation in the hypercube.
\newblock {\em Proc. Amer. Math. Soc.}, 145(11):4837--4844, 2017.

\bibitem{reg_cas}
Cas van Der~Oord, Genevi{\`e}ve Dusson, G{\'a}bor Cs{\'a}nyi, and Christoph
  Ortner.
\newblock Regularised atomic body-ordered permutation-invariant polynomials for
  the construction of interatomic potentials.
\newblock {\em Mach. Learn.: Sci. Technol.}, 1(1):015004, 2020.

\bibitem{vanderoord2022HAL}
Cas van~der Oord, Matthias Sachs, D{\'a}vid~P{\'e}ter Kov{\'a}cs, Christoph
  Ortner, and G{\'a}bor Cs{\'a}nyi.
\newblock Hyperactive learning for data-driven interatomic potentials.
\newblock {\em npj Computational Materials}, 9(1):168, 2023.

\bibitem{vandermause2020flyal}
Jonathan Vandermause, Steven~B Torrisi, Simon Batzner, Yu~Xie, Lixin Sun,
  Alexie~M Kolpak, and Boris Kozinsky.
\newblock On-the-fly active learning of interpretable {Bayesian} force fields
  for atomistic rare events.
\newblock {\em npj Comput. Mater.}, 6(1):20, 2020.

\bibitem{wang2023theoretical}
Yangshuai Wang, Shashwat Patel, and Christoph Ortner.
\newblock A theoretical case study of the generalisation of machine-learned
  potentials.
\newblock {\em arXiv:2311.01664}, 2023.

\bibitem{2023-acepotentials}
William~C Witt, Cas van~der Oord, Elena Gel{\v{z}}inyt{\.e}, Teemu
  J{\"a}rvinen, Andres Ross, James~P Darby, Cheuk~Hin Ho, William~J Baldwin,
  Matthias Sachs, James Kermode, et~al.
\newblock Acepotentials. jl: A julia implementation of the atomic cluster
  expansion.
\newblock {\em J. Chem. Phys.}, 159(16), 2023.

\bibitem{yutsis1962mathematical}
Adolfas~P Yutsis, Ioshua~Beniaminovich Levinson, and Vladislavas~Vladovich
  Vanagas.
\newblock Mathematical apparatus of the theory of angular momentum.
\newblock {\em Academy of Sciences of the Lithuanian S.S.R.}, 1962.

\bibitem{zhang2020Fe}
Lei Zhang, G{\'a}bor Cs{\'a}nyi, Erik van~der Giessen, and Francesco Maresca.
\newblock Atomistic fracture in bcc iron revealed by active learning of
  gaussian approximation potential.
\newblock {\em npj Computational Materials}, 9(1):217, Dec 2023.

\bibitem{ACEHam}
Liwei Zhang, Berk Onat, Genevi{\`e}ve Dusson, Adam McSloy, Gautam Anand,
  Reinhard~J Maurer, Christoph Ortner, and James~R Kermode.
\newblock Equivariant analytical mapping of first principles {Hamiltonians} to
  accurate and transferable materials models.
\newblock {\em npj Comput. Mater.}, 8(1):158, 2022.

\bibitem{ACESchrodinger}
Dexuan Zhou, Huajie Chen, Cheuk~Hin Ho, and Christoph Ortner.
\newblock A multilevel method for many-electron {S}chr\"{o}dinger equations
  based on the atomic cluster expansion.
\newblock {\em arXiv:2304.04260}, 2023. to appear in SIAM J. Sci. Comput.

\bibitem{Zuo2020}
Yunxing Zuo, Chi Chen, Xiangguo Li, Zhi Deng, Yiming Chen, Jörg Behler, Gábor
  Csányi, Alexander~V. Shapeev, Aidan~P. Thompson, Mitchell~A. Wood, and
  Shyue~Ping Ong.
\newblock Performance and cost assessment of machine learning interatomic
  potentials.
\newblock {\em J. Phys. Chem. A}, 124(4):731--745, 2020.
\newblock PMID: 31916773.

\end{thebibliography}

\end{document}